\theoremstyle{plain}
\newtheorem{theorem}{Theorem}[section]
\newtheorem{corollary}[theorem]{Corollary}
\newtheorem{definition}[theorem]{Definition}
\newtheorem{lemma}[theorem]{Lemma}
\newtheorem{proposition}[theorem]{Proposition}
\newtheorem{remark}[theorem]{Remark}
\newtheorem*{theorem*}{Theorem}
\numberwithin{equation}{section}
\DeclareMathOperator*{\Hess}{Hess}
\DeclareMathOperator*{\tr}{tr}
\DeclareMathOperator*{\divr}{div}
\begin{document}

\title{A regularity criterion for the Navier-Stokes equation involving only the middle eigenvalue of the strain tensor}
\author[1]{Evan Miller}
\affil[1]{The University of Toronto, Department of Mathematics, ep.miller@mail.utoronto.ca}
\maketitle

\begin{abstract}
This manuscript derives an evolution equation for the symmetric part of the gradient of the velocity (the strain tensor) in the incompressible Navier-Stokes equation on $\mathbb{R}^3$, and proves the existence of $L^2$ mild solutions to this equation.
We use this equation to obtain a simplified identity for the growth of enstrophy for mild solutions that depends only on the strain tensor, not on the nonlocal interaction of the strain tensor with the vorticity. The
resulting identity allows us to prove a new family of scale-critical, 
necessary and sufficient conditions for the blow-up of a solution at some finite time $T_{max}<+\infty$,  which depend only on the history of the positive part of the second eigenvalue of the strain matrix.  Since this matrix is trace-free,
this severely restricts the 
geometry of any finite-time blow-up. This regularity criterion provides analytic evidence of the numerically observed tendency of the vorticity to align with the eigenvector corresponding to the middle eigenvalue of the strain matrix. This regularity criterion also allows us to prove as a corollary a new scale critical, one component type, regularity criterion for a range of exponents for which there were previously no known critical, one component type regularity criteria. Furthermore, our analysis permits us to extend the known time of existence of smooth solutions with fixed initial enstrophy $E_0=\frac{1}{2}\left\|\nabla \otimes u^0\right \|_{L^2}^2$
by a factor of $4{,}920{.}75$---although the previous constant in the literature was not expected to be close to optimal, so this improvement is less drastic than it sounds, especially compared with numerical results.  Finally, we will prove the existence and stability of blow-up for a toy model ODE for the strain equation.
\end{abstract}

\section*{Acknowledgments}
This work was supported by the Ontario Trillium Scholarship, the Coxeter Graduate Scholarship, the University of Toronto Department of Mathematics, and the CUPE local 3902 and CUPE national strike funds. I would like to thank my adviser, Prof. Robert McCann, for his support and advice throughout this work. I would also like to thank the anonymous referee for their extremely thorough and helpful report.

This is a preprint of a paper published in the Archive for Rational Mechanics and Analysis. The final published version is available at \url{https://doi.org/10.1007/s00205-019-01419-z.}
\pagebreak

\section{Introduction}

The Navier-Stokes equations are the fundamental equations of fluid dynamics. The incompressible Navier-Stokes equations are given by
\begin{equation} \label{Navier}
\begin{split}
\partial_t u- \Delta u + (u \cdot \nabla)u + \nabla p&=0,  \\
\nabla \cdot u&=0.
\end{split}
\end{equation}
where $u \in \mathbb{R}^3$ denotes the velocity and $p$ the pressure. For incompressible flow with no external force, the pressure is completely determined by $u$ by taking the divergence of both sides of the equation, which yields
\begin{equation}
- \Delta p =\sum_{i,j=1}^3 \frac{\partial u_i}{\partial x_j}
\frac{\partial u_j}{\partial x_i}.
\end{equation}
The Navier-Stokes equation---which is best viewed as an evolution equation on the space of divergence free vector fields rather than as a system of equations---is nonlocal because of the nonlocal dependence of $\nabla p$ on $u.$
Much about the solutions of the Navier-Stokes equation is unknown, including uniqueness and regularity.
The main barrier is the fact that the energy equality, which states that for any smooth solution $u$ of the Navier-Stokes equation
\begin{equation}
    \frac{1}{2}\|u(\cdot,t)\|_{L^2}^2+
    \int_0^t\|\nabla \otimes u(\cdot,\tau)\|_{L^2}^2d\tau
    =\frac{1}{2}\left\|u^0\right\|_{L^2}^2,
\end{equation}
implies $L^\infty_t L^2_x\cap L^2_t \dot{H}^1_x$ control on solutions, but this control is supercritical in three spatial dimensions with respect to the scaling $u^\lambda(x,t)=\lambda u(\lambda x,\lambda^2 t)$ that preserves the solution set of \eqref{Navier}.

In this paper, we will prove new conditional regularity results based on an analysis of the role of the strain tensor.
We will begin by defining a few matrices.
The gradient tensor will be given by 
\begin{equation}(\nabla \otimes u)_{ij}=\frac{\partial u_j}{\partial x_i}.
\end{equation}
The symmetric part $S,$ which we will refer to as the strain tensor, will be given by
\begin{equation}
S_{ij}=\frac{1}{2}\left(\frac{\partial u_j}{\partial x_i}+\frac{\partial u_i}{\partial x_j}\right),
\end{equation}
and the anti-symmetric part $A$ will be given by
\begin{equation}A_{ij}=\frac{1}{2}\left(\frac{\partial u_j}{\partial x_i}-\frac{\partial u_i}{\partial x_j}\right).
\end{equation}
It is immediately clear that $S$ is symmetric, $A$ is anti-symmetric, and that $\nabla \otimes u=S+A.$  We also define the differential operator $\nabla_{sym}$ as the map from a vector to the symmetric part of its gradient tensor:  $\nabla_{sym}(v)_{ij}= \frac{1}{2}\left(\frac{\partial v_j}{\partial x_i}+\frac{\partial v_i}{\partial x_j}\right)$. We will also note that in three spatial dimensions the anti-symmetric matrix $A$ can be represented as a vector, which we will call the vorticity. The vorticity, given by the curl of the flow $\omega=\nabla \times u$, plays a very important role in fluid mechanics. It is related to $A$ by
\begin{equation} \label{AntiVort}
A=\frac{1}{2} \left (
\begin{matrix}
0 & \omega_3 & -\omega_2 \\
-\omega_3 & 0 & \omega_1 \\
\omega_2 & -\omega_1 & 0 \\
\end{matrix}
\right ).
\end{equation}

The equation for the evolution of vorticity $\omega$ is written as follows:
\begin{equation} \label{NavierVorticity}
\partial_t \omega- \Delta \omega+ (u\cdot \nabla) \omega- S \omega=0.
\end{equation}
The vortex stretching term $S \omega$ is often written $(\omega \cdot \nabla) u,$ but it is clear from \eqref{AntiVort} that $A \omega=0,$ therefore
\begin{align}
(\omega \cdot \nabla) u&=\left(S+A \right)\omega\\
&=S\omega.
\end{align}
The vorticity equation \eqref{NavierVorticity} is also a nonlocal equation because $u=\nabla \times (- \Delta)^{-1} \omega$. The vortex stretching term $S \omega$ is the quadratic nonlinearity, but does not have a very natural structure relative to the space of divergence free vector fields.  For this reason, we will consider the evolution equation for the strain tensor. In three spatial dimensions the evolution equation for the strain is given by
\begin{equation} \label{NavierStrain}
\partial_t S+ (u \cdot \nabla)S  -\Delta S+S^2+\frac{1}{4}\omega \otimes \omega - \frac{1}{4}|\omega|^2 I_3+ \Hess(p)=0,
\end{equation}
as we will show in the next section.

The strain tensor has already been studied \cite{VorticityAlignmentResults,ConcentratedVorticity,NonlocalStrain} in terms of its importance for enstrophy production in the vorticity equation \eqref{NavierVorticity}.  The evolution equation for the strain tensor, while it requires additional terms, has a quadratic nonlinearity whose structure is far better from an algebraic point of view.  This is because while a vector cannot be squared, and the square of an anti-symmetric matrix (the other representation of vorticity) is a symmetric matrix, the square of a symmetric matrix is again a symmetric matrix.

It is well known that 
\begin{equation} \label{vrt}
\partial_t \frac{1}{2} \|\omega(\cdot,t)\|_{L^2}^2=-\|\omega\|_{\dot{H^1}}^2+
\left < S, \omega \otimes \omega \right >.
\end{equation}
Using the Sobolev embedding of $\dot{H}^1\left(\mathbb{R}^3 \right )$ into 
$L^6\left(\mathbb{R}^3 \right )$ it follows from \eqref{vrt} that
\begin{equation}
\partial_t \|\omega(\cdot,t)\|_{L^2}^2 \leq C \|\omega(\cdot,t)\|_{L^2}^6,
\end{equation}
which is sufficient to guarantee regularity at least locally in time, but cannot prevent blowup because it is a cubic differential inequality.
Enstrophy can be defined equivalently as
\begin{align}
E(t)&=\frac{1}{2}\|\nabla \otimes u(\cdot,t)\|_{L^2}^2\\
&=\frac{1}{2}\|\omega(\cdot,t)\|_{L^2}^2\\
&=\|S(\cdot,t)\|_{L^2}^2.
\end{align}
We will prove the equivalence of these definitions in section 3.
In this paper, we will prove an identity for enstrophy growth:
\begin{equation} \label{st}
\partial_t \|S(\cdot,t)\|_{L^2}^2=-2\|S\|_{\dot{H^1}}^2-
\frac{4}{3}\int \tr(S^3).
\end{equation}
Using the fact that $S$ must be trace free, because $\tr(S)=\nabla \cdot u=0,$ this identity can also be expressed in terms of the determinant of $S$ as
\begin{equation} \label{stDet}
\partial_t \|S(\cdot,t)\|_{L^2}^2=-2\|S\|_{\dot{H^1}}^2-
4\int \det(S).
\end{equation}

The nonlinearity in \eqref{st} is still of the same degree as in \eqref{vrt}. Both nonlinearities are of degree $3,$ and so cannot be controlled by the dissipation in either case, however the identity \eqref{st} does have several advantages. First, unlike \eqref{vrt}, this identity is entirely local. The identity \eqref{vrt} is nonlocal with a singular integral kernel, because $S$ can be determined in terms of $\omega$ with a zeroth order pseudo-differential operator, 
$S=\nabla_{sym} (- \Delta)^{-1}\nabla \times \omega.$
The identity \eqref{st} also reveals very significant information about the eigenvalues of the strain tensor $S$ and their relation to blowup.
Note that throughout this paper $\dot{H}^s$ will refer to the homogeneous Sobolev space with the norm
\begin{equation}
\|f\|_{\dot{H}^s}^2= \left < f, (- \Delta)^s f \right >,
\end{equation}
and we will take the magnitude of a matrix, $M\in \mathbb{R}^{3 \times 3},$ 
to be the Euclidean norm
\begin{equation}
|M|^2=\sum_{i,j=1}^3M_{ij}^2.
\end{equation}

Finally, the identity \eqref{stDet} will lead to a new regularity criterion that encodes information about the geometric structure of potential blow-up solutions. The main result of this paper is:
\begin{theorem}[Middle eigenvalue of strain characterizes blowup time] \label{IntroEigenRegCrit}
Let $u \in C\left ([0,T];\dot{H}^1\left(\mathbb{R}^3\right ) \right )$
for all $T<T_{max}$ be a mild solution to the Navier-Stokes equation, and let $\lambda_1(x) \leq \lambda_2(x) \leq \lambda_3(x)$ be the eigenvalues of the strain tensor $S(x)=\nabla_{sym}u(x).$ Let $\lambda_2^+(x)=\max \{\lambda_2(x),0\}.$ If $\frac{2}{p}+\frac{3}{q}=2,$ with $\frac{3}{2}<q \leq +\infty,$ then
\begin{equation}
\|u(\cdot,T)\|_{\dot{H}^1}^2 \leq \left\|u^0\right\|_{\dot{H}^1}^2
\exp \left (C_q \int_{0}^T\|\lambda_2^+(\cdot,t)\|_{L^q\left (\mathbb{R}^3\right )}^p dt \right ),
\end{equation}
with the constant $C_q$ depending only on $q.$
In particular if $T_{max}<+\infty,$ where $T_{max}$ is the maximal existence time for a smooth solution, then
\begin{equation}
\int_{0}^{T_{max}}\|\lambda_2^+(\cdot,t)\|_{L^q(\mathbb{R}^3)}^p dt= + \infty.
\end{equation}
\end{theorem}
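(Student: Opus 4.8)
The plan is to run a Grönwall argument driven entirely by the local enstrophy identity \eqref{stDet}, extracting the role of the middle eigenvalue from a single pointwise algebraic estimate on $\det(S)$. Since $\|u\|_{\dot{H}^1}^2=\|\nabla\otimes u\|_{L^2}^2=2\|S\|_{L^2}^2$, it suffices to prove the differential inequality
\begin{equation}
\partial_t \|S(\cdot,t)\|_{L^2}^2 \le C_q\,\|\lambda_2^+(\cdot,t)\|_{L^q}^p\,\|S(\cdot,t)\|_{L^2}^2
\end{equation}
and integrate it. Starting from \eqref{stDet}, I would diagonalize $S$ pointwise and exploit that it is trace-free: because $\lambda_1+\lambda_2+\lambda_3=0$ with $\lambda_1\le\lambda_2\le\lambda_3$, necessarily $\lambda_1\le 0\le\lambda_3$, so $\det(S)=\lambda_2(\lambda_1\lambda_3)$ with $\lambda_1\lambda_3\le 0$. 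Retaining only the contribution that can increase enstrophy yields the pointwise bound
\begin{equation}
-\det(S)=\lambda_2(-\lambda_1\lambda_3)\le \lambda_2^+(-\lambda_1\lambda_3)\le \tfrac12\,\lambda_2^+\,|S|^2,
\end{equation}
where the last step uses $-\lambda_1\lambda_3=|\lambda_1|\,|\lambda_3|\le\tfrac12(\lambda_1^2+\lambda_3^2)\le\tfrac12|S|^2$. This is the heart of the matter: the trace-free structure forces the destabilizing part of the nonlinearity to be controlled by the positive part of the \emph{middle} eigenvalue alone. Integrating gives $\partial_t\|S\|_{L^2}^2\le -2\|S\|_{\dot{H}^1}^2+2\int\lambda_2^+|S|^2$.

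Next I would estimate $\int\lambda_2^+|S|^2$ by Hölder's inequality with exponents $q$ and $q'=q/(q-1)$, obtaining $\int\lambda_2^+|S|^2\le\|\lambda_2^+\|_{L^q}\|S\|_{L^{2q'}}^2$, and then interpolate $\|S\|_{L^{2q'}}$ between $L^2$ and $L^6$, using the Sobolev embedding $\dot{H}^1(\mathbb{R}^3)\hookrightarrow L^6(\mathbb{R}^3)$. A short computation gives $\|S\|_{L^{2q'}}\le C\|S\|_{L^2}^{1-\theta}\|S\|_{\dot{H}^1}^{\theta}$ with $\theta=\tfrac{3}{2q}$, which lies in $[0,1)$ precisely when $q>\tfrac32$ (and $\theta=0$ at the endpoint $q=\infty$, where no interpolation is needed). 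Hence
\begin{equation}
2\int\lambda_2^+|S|^2\le C\,\|\lambda_2^+\|_{L^q}\,\|S\|_{L^2}^{2(1-\theta)}\,\|S\|_{\dot{H}^1}^{2\theta}.
\end{equation}

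Since $2\theta<2$, I would apply Young's inequality to split off a factor $\|S\|_{\dot{H}^1}^2$ that is absorbed by the dissipation $-2\|S\|_{\dot{H}^1}^2$, leaving a term $C_q\|\lambda_2^+\|_{L^q}^{1/(1-\theta)}\|S\|_{L^2}^2$. The exponent matches the scaling relation exactly: with $\theta=\tfrac{3}{2q}$ one has $\tfrac{1}{1-\theta}=\tfrac{2q}{2q-3}=p$, which is precisely why the condition $\tfrac2p+\tfrac3q=2$ appears. This produces the desired differential inequality, and Grönwall's lemma then yields $\|S(\cdot,T)\|_{L^2}^2\le\|S^0\|_{L^2}^2\exp\!\big(C_q\int_0^T\|\lambda_2^+\|_{L^q}^p\,dt\big)$; using $\|u\|_{\dot{H}^1}^2=2\|S\|_{L^2}^2$ recovers the stated $\dot{H}^1$ bound. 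The final assertion is the contrapositive: were the integral finite, $\|u(\cdot,T)\|_{\dot{H}^1}$ would remain bounded up to $T_{max}$, contradicting the standard fact that $\dot{H}^1$ solutions persist as long as the $\dot{H}^1$ norm stays bounded.

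The main obstacle I expect is bookkeeping rather than analysis: verifying that the interpolation and Young exponents conspire to reproduce exactly the critical relation $\tfrac2p+\tfrac3q=2$, and in particular that the power of $\|S\|_{\dot{H}^1}$ stays strictly below $2$ across the whole admissible range, which is what forces $q>\tfrac32$. A secondary point requiring care is the regularity needed to justify the pointwise diagonalization and the time differentiation at the level of mild solutions; since the enstrophy identity \eqref{stDet} is already established for such solutions, I would invoke it directly and treat the eigenvalues as measurable functions of $x$, for which the pointwise estimate on $\det(S)$ integrates without difficulty.
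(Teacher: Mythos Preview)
Your proposal is correct and follows essentially the same route as the paper: the pointwise bound $-\det(S)\le\tfrac12\lambda_2^+|S|^2$ via $\lambda_1\le 0\le\lambda_3$ and Young's inequality is exactly the paper's Lemma~\ref{EigenBound}, and the subsequent H\"older--interpolation--Young--Gr\"onwall chain (with your $\theta=\tfrac{3}{2q}$ matching the paper's $1-\sigma$ and your $q'$ matching the paper's $a$) reproduces the proof of Theorem~\ref{RegCrit} in the force-free case.
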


It goes back to the classic work of Kato \cite{KatoL3} that smooth solutions must exist locally in time for any initial data $u^0 \in L^q\left (\mathbb{R}^3 \right ),$ $\nabla \cdot u^0=0,$ when $q>3$. In particular, this implies that a smooth solution of the Navier-Stokes equations developing singularities in finite time requires that the $L^q$ norm of $u$ must blow up for all $q>3.$ This was extended to the case $q=3$ by Escauriaza, Seregin, and \v{S}ver\'ak \cite{SereginL3}. The regularity criteria implied by the local existence of smooth solutions for initial data in $L^q \left (\mathbb{R}^3\right)$ when $q>3$ are all subcritical with respect to the scaling that preserves the solution set of the Navier-Stokes equations:
\begin{equation}
u^\lambda(x,t)=
\lambda u(\lambda x, \lambda^2t).
\end{equation}
If $u$ is a solution to the Navier-Stokes equations on $\mathbb{R}^3$, then so is $u^\lambda$ for all $\lambda>0,$ although the time interval may have to be adjusted, depending on what notion of a solution (Leray-Hopf \cite{Leray}, mild, strong \cite{Kato}) we are using.
$L^3 \left ( \mathbb{R}^3 \right)$ is the scale critical Lebesgue space for the Navier-Stokes equations, so the Escauriaza-Seregin-\v{S}ver\'ak condition is scale critical.

Critical regularity criteria for solutions to the Navier-Stokes equations go back to the work of Prodi, Serrin, and Ladyzhenskaya \cite{Serrin,Ladyzhenskaya,Prodi}, who proved that if a smooth solution $u$ of the Navier-Stokes equation blows up in finite time, then 
\begin{equation} \label{Serrin}
\int_0^{T_{max}}\|u\|_{L^q}^p dt=+ \infty,
\end{equation}
where $\frac{2}{p}+\frac{3}{q}=1,$ and $3<q \leq + \infty$. This result was then extended in the aforementioned Escauriaza-Seregin-\v{S}ver\'ak paper\cite{SereginL3} to the endpoint case $p=+ \infty, q=3.$ They proved that if a smooth solution $u$ of the Navier-Stokes equation blows up in finite time, then 
\begin{equation}
\limsup_{t \to T_{max}}\|u(\cdot,t)\|_{L^3(\mathbb{R}^3)}=+\infty.
\end{equation}
Gallagher, Koch, and Planchon \cite{ProfileDecomp} also proved the above statement  using a different approach based on profile decomposition.
The other endpoint case of this family of regularity criteria is the Beale-Kato-Majda criterion \cite{BealeKatoMajda}, which holds for Euler as well as for Navier-Stokes, and states that if a smooth solution to either the Euler or Navier-Stokes equations develops singularities in finite time, then 
\begin{equation}
\int_0^{T_{max}}\|\omega(\cdot, t)\|_{L^\infty} dt= + \infty.
\end{equation} 
This result was also extended to the strain tensor \cite{KatoStrain}.

The regularity criterion in Theorem \ref{IntroEigenRegCrit} also offers analytical evidence of the numerically observed tendency \cite{VorticityAlignmentResults} of the vorticity to align with the eigenvector of $S$ corresponding to the intermediate eigenvalue $\lambda_2.$ If it is true that the vorticity tends to align with the intermediate eigenvalue, we would heuristically expect that
\begin{equation}
tr\big (S(x) \omega(x) \otimes \omega(x)\big ) \sim  \lambda_2(x) |\omega(x)|^2.
\end{equation}
We would then heuristically expect that
\begin{equation}
\left <S; \omega \otimes \omega \right > \sim 
\int_{\mathbb{R}^3} \lambda_2(x)|\omega(x)|^2 dx,
\end{equation}
and so we would expect that there would be some inequality of the form
\begin{equation}
\left <S; \omega \otimes \omega \right > \leq C 
\int_{\mathbb{R}^3}\lambda_2^+(x)|\omega(x)|^2 dx.
\end{equation}
This is all, of course, entirely heuristic, but it is interesting that the regularity criterion we have proven is precisely of the form that would be predicted by the observed tendency of the vorticity to align with the eigenvector associated with the intermediate eigenvalue. This suggests that significant information about the geometric structure of incompressible flow is encoded in the regularity criterion in Theorem \ref{IntroEigenRegCrit}.

The family of regularity criteria in \eqref{Serrin} has since been generalized to the critical Besov spaces \cite{Gallagher,232,233,90,Phuc,Dallas}. These criteria have also been generalized to criteria controlling the pressure \cite{StruwePressure,XinweiPressure,SereginPressure}. In addition to strengthening regularity criteria to larger spaces, there have also been results not involving all the components of $u,$ for instance regularity criteria on the gradient of one component $\nabla u_j$ \cite{ZhouOne}, involving only the derivative in one direction, $\partial_{x_i}u$ \cite{Kukavica}, involving only one component $u_j$ \cite{Chemin,Chemin2}, involving only one component of the gradient tensor $\frac{\partial u_j}{\partial x_i}$ \cite{CaoTiti}, and involving only two components of the vorticity \cite{Vort2}. For a more thorough overview of the literature on regularity criteria for solutions to the Navier-Stokes equation see Chapter 11 in \cite{NS21}. \nocite{SereginBlowUp} 
We will discuss the relationship between these results and our main theorem further in section 5, where we will prove the following critical one direction type regularity criterion for a range of exponents for which no critical one component regularity criteria were previously known. First we must define, for any unit vector $v \in \mathbb{R}^3,$ $|v|=1,$ the directional derivative in the $v$ direction, which is given by
$\partial_v=v \cdot \nabla,$ and the $v$-th component of $u,$ which is given by $u_v=u\cdot v.$

\begin{theorem}[One direction regularity criterion] \label{OneDir}
Let $\left \{v_n(t) \right \}_{n\in \mathbb{N}}\subset \mathbb{R}^3$ with 
$|v_n(t)|=1.$ Let $\left \{\Omega_n(t) \right \}_{n\in \mathbb{N}}\subset \mathbb{R}^3$ be Lebesgue measurable sets such that for all $m \neq n,$
$\Omega_m(t) \cap \Omega_n(t)= \emptyset,$ and 
$\mathbb{R}^3= \bigcup_{n\in \mathbb{N}} \Omega_n(t).$
Let $u \in C\left([0,T];\dot{H}^1\left (\mathbb{R}^3\right ) \right ),$ for all $T<T_{max}$ be a mild solution to the Navier-Stokes equation.
If $\frac{2}{p}+\frac{3}{q}=2,$ with $\frac{3}{2}<q \leq + \infty,$ then
\begin{equation}
\|u(\cdot,T)\|_{\dot{H}^1}^2 \leq 
\left\|u^0\right\|_{\dot{H}^1}^2
\exp \left (C_q \int_{0}^T\left (\sum_{n=1}^\infty\left|\left|
\frac{1}{2}\partial_{v_n}u(\cdot,t)+ \frac{1}{2}\nabla u_{v_n}(\cdot,t)
\right|\right|_{L^q(\Omega_n(t))}^q \right )^\frac{p}{q} dt \right ),
\end{equation}
with the constant $C_q$ depending only on $q.$
In particular if the maximal existence time for a smooth solution $T_{max}<+\infty,$ then 
\begin{equation} \label{Locally2D}
\int_{0}^{T_{max}}\left (\sum_{n=1}^\infty \bigl \|
\partial_{v_n} u(\cdot,t)+ \nabla u_{v_n}(\cdot,t) \bigr 
\|_{L^q(\Omega_n(t))}^q \right )^\frac{p}{q} dt= + \infty.
\end{equation}
Note that if we take $v_n(t)= \left ( \begin{matrix}
0\\0\\1 \end{matrix} \right )$ for each $n \in \mathbb{N},$
then \eqref{Locally2D} reduces to
\begin{equation}
\int_{0}^{T_{max}}\|\partial_3 u(\cdot,t)+\nabla u_3(\cdot,t)
\|_{L^q(\mathbb{R}^3)}^p dt= + \infty.
\end{equation}
\end{theorem}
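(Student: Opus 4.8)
The plan is to obtain Theorem~\ref{OneDir} as a corollary of the middle-eigenvalue criterion in Theorem~\ref{IntroEigenRegCrit}, so that the whole problem reduces to controlling $\lambda_2^+$ pointwise by the quantity appearing inside the norms. The first ingredient I would record is the purely algebraic identity that for any \emph{constant} unit vector $v$ the strain tensor acting on $v$ is exactly
\begin{equation}
S v=\tfrac{1}{2}\partial_v u+\tfrac{1}{2}\nabla u_v .
\end{equation}
This follows at once from $(Sv)_i=\tfrac12\sum_j(\partial_i u_j+\partial_j u_i)v_j=\tfrac12\,\partial_i(u\cdot v)+\tfrac12(v\cdot\nabla)u_i$ once $v$ is independent of $x$, the two terms being $\tfrac12(\nabla u_v)_i$ and $\tfrac12(\partial_v u)_i$ respectively. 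Hence the function integrated in the statement is nothing other than $|S v_n|$ on each piece $\Omega_n(t)$ (and $2|Sv_n|$ in the constant-absorbed form \eqref{Locally2D}).

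The crux is the pointwise inequality
\begin{equation}
\lambda_2^+(x)\le |S(x)\,v| \qquad\text{for every unit vector } v .
\end{equation}
To prove it, fix $x$, diagonalize $S(x)$ in an orthonormal eigenbasis and write $v=\sum_i c_i e_i$ with $\sum_i c_i^2=1$, so that $|Sv|^2=\sum_i c_i^2\lambda_i^2\ge \min_i\lambda_i^2$. If $\lambda_2\le 0$ the claim is trivial since $\lambda_2^+=0$. If $\lambda_2>0$, the trace-free condition $\lambda_1+\lambda_2+\lambda_3=0$ forces $\lambda_3\ge\lambda_2>0$ and $-\lambda_1=\lambda_2+\lambda_3\ge\lambda_2$, whence $\lambda_1^2\ge\lambda_2^2$ and $\lambda_3^2\ge\lambda_2^2$; therefore $\min_i\lambda_i^2=\lambda_2^2=(\lambda_2^+)^2$, giving $|Sv|\ge\lambda_2^+$. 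It is precisely the trace-free structure that makes the \emph{middle} eigenvalue the smallest in magnitude whenever it is positive, which is exactly what is needed here.

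With these two facts the remainder is bookkeeping. Applying the pointwise inequality on each $\Omega_n(t)$ with $v=v_n(t)$ and summing over the disjoint cover gives
\begin{equation}
\|\lambda_2^+(\cdot,t)\|_{L^q(\mathbb{R}^3)}^q
=\sum_{n=1}^\infty\int_{\Omega_n(t)}(\lambda_2^+)^q
\le\sum_{n=1}^\infty\|S v_n\|_{L^q(\Omega_n(t))}^q
=\sum_{n=1}^\infty\Bigl\|\tfrac12\partial_{v_n}u+\tfrac12\nabla u_{v_n}\Bigr\|_{L^q(\Omega_n(t))}^q .
\end{equation}
Raising both sides to the power $p/q$ and substituting into the exponential bound of Theorem~\ref{IntroEigenRegCrit} yields the stated estimate (with the endpoint $q=+\infty$, $p=1$ handled by the usual supremum convention for the $\ell^\infty$-type sum). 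The blow-up statement \eqref{Locally2D} then follows because dropping the factors $\tfrac12$ replaces the summand by $2^q\|Sv_n\|^q$, so the divergent integral is only altered by the harmless multiplicative constant $2^p$.

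I expect no genuine analytic obstacle, since Theorem~\ref{IntroEigenRegCrit} already supplies the Gr\"onwall and interpolation machinery; the only real content is the pointwise eigenvalue inequality above. The lone technical point to verify is measurability, namely that $v_n(t)$ being constant in $x$ on $\Omega_n(t)$ keeps $x\mapsto|S(x)v_n(t)|$ measurable and that the layer-cake splitting of $\|\lambda_2^+\|_{L^q}^q$ over the cover is legitimate; both are immediate from the hypotheses that the $\Omega_n(t)$ are measurable, pairwise disjoint, and cover $\mathbb{R}^3$.
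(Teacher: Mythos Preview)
Your proposal is correct and follows essentially the same route as the paper: the paper proves the pointwise bound $|\lambda_2|\le|Sv|$ via the spectral theorem and the trace-free condition (Lemma~\ref{Spectral}), feeds it into Theorem~\ref{IntroEigenRegCrit} to obtain a criterion in terms of $\|Sv\|_{L^q}$ for a general measurable unit field $v(x,t)$ (Theorem~\ref{IntroStrain}), and then specializes to the piecewise-constant field $v=\sum_n v_n I_{\Omega_n}$ together with the identity $Sv_n=\tfrac12\partial_{v_n}u+\tfrac12\nabla u_{v_n}$. The only differences are organizational: the paper records the slightly stronger inequality $|\lambda_2|\le|Sv|$ rather than $\lambda_2^+\le|Sv|$, and isolates the general-$v$ statement as a separate theorem before specializing, whereas you go directly to the piecewise case.
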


Theorem \ref{OneDir} is in fact a corollary of the following more general theorem, which states that for a solution of the Navier-Stokes equation to blow up, the strain must blow up in every direction.

\begin{theorem}[Blowup requires the strain to blow up in every direction] \label{IntroStrain}
Let $u \in C\left([0,T];\dot{H}^1\left (\mathbb{R}^3\right ) \right ),$ for all $T<T_{max}$ be a mild solution to the Navier-Stokes equation and let 
$v \in L^\infty\left (\mathbb{R}^3 \times [0,T_{max}];\mathbb{R}^3\right ),$ 
with $|v(x,t)|=1$ almost everywhere.
If $\frac{2}{p}+\frac{3}{q}=2,$ with $\frac{3}{2}<q \leq + \infty,$ then
\begin{equation}
\|u(\cdot,T)\|_{\dot{H}^1}^2 \leq 
\left\|u^0\right\|_{\dot{H}^1}^2
\exp \left (C_q \int_{0}^T\|S(\cdot,t)v(\cdot,t)\|_{L^q\left (\mathbb{R}^3\right )}^p dt \right ),
\end{equation}
with the constant $C_q$ depending only on $q.$
In particular if the maximal existence time for a smooth solution $T_{max}<+\infty,$ then 
\begin{equation}
\int_{0}^{T_{max}}\|S(\cdot,t)v(\cdot,t)\|_{L^q(\mathbb{R}^3)}^p dt= + \infty.
\end{equation}
\end{theorem}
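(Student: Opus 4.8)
The plan is to run a Gr\"onwall argument on the enstrophy $E(t)=\|S(\cdot,t)\|_{L^2}^2=\tfrac12\|u(\cdot,t)\|_{\dot H^1}^2$, starting from the local identity \eqref{stDet}, which reads
\begin{equation}
\partial_t E(t)=-2\|S\|_{\dot H^1}^2-4\int_{\mathbb{R}^3}\det\big(S(x,t)\big)\,dx .
\end{equation}
The entire theorem then reduces to a single pointwise algebraic inequality controlling the determinant by the strain in the direction $v$, namely
\begin{equation}
-\det\big(S(x)\big)\leq \tfrac12\,|S(x)v(x)|\,|S(x)|^2 \qquad\text{for every unit vector } v(x),
\end{equation}
after which everything is H\"older's inequality, Sobolev interpolation, and Young's inequality. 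I expect this pointwise inequality to be the crux, as the remaining analysis is routine.

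To establish it I would diagonalize the symmetric matrix $S(x)$ with eigenvalues $\lambda_1\leq\lambda_2\leq\lambda_3$ and orthonormal eigenvectors $e_i$, using that $S$ is trace-free, so $\lambda_1+\lambda_2+\lambda_3=0$ and $\det(S)=\lambda_1\lambda_2\lambda_3$. Since the eigenvalues sum to zero we always have $\lambda_1\leq 0\leq\lambda_3$, so $\det(S)=\lambda_2\cdot(\lambda_1\lambda_3)$ is negative only when $\lambda_2>0$; when $\lambda_2\leq 0$ we get $-\det(S)\leq 0$ and the inequality is trivial. In the remaining case $\lambda_2>0$ one has $\lambda_1<0<\lambda_2\leq\lambda_3$ and $|\lambda_1|=\lambda_2+\lambda_3\geq\lambda_3\geq\lambda_2$, so that $\lambda_2^+=\lambda_2=\min_i|\lambda_i|$. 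Then $-\lambda_1\lambda_3\leq\tfrac12(\lambda_1^2+\lambda_3^2)\leq\tfrac12|S|^2$ yields $-\det(S)\leq\tfrac12\lambda_2^+|S|^2$, the bound underlying Theorem \ref{IntroEigenRegCrit}. The new ingredient needed here is that for any unit vector $v$, expanding in the eigenbasis gives $|Sv|^2=\sum_i (v\cdot e_i)^2\lambda_i^2\geq \min_i\lambda_i^2=(\lambda_2^+)^2$, so $\lambda_2^+\leq|Sv|$ and hence $-\det(S)\leq\tfrac12|Sv|\,|S|^2$. This is exactly the step that upgrades the middle-eigenvalue criterion to the "every direction" criterion, and I regard it as the main obstacle.

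Integrating the pointwise bound and applying H\"older with exponents $q$ and $q'=q/(q-1)$ gives $-4\int\det(S)\leq 2\|Sv\|_{L^q}\|S\|_{L^{2q'}}^2$. The hypothesis $\tfrac32<q\leq\infty$ is precisely what forces $2q'=2q/(q-1)$ into $[2,6]$, so I would interpolate $\|S\|_{L^{2q'}}$ between $L^2$ and $L^6$ and apply the Sobolev embedding $\dot H^1(\mathbb{R}^3)\hookrightarrow L^6$ to get $\|S\|_{L^{2q'}}^2\leq C\,E^{1-\theta}\|S\|_{\dot H^1}^{2\theta}$ with $\theta=\tfrac{3}{2q}$. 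Young's inequality then absorbs $\|S\|_{\dot H^1}^{2\theta}$ into the dissipation $-2\|S\|_{\dot H^1}^2$, leaving
\begin{equation}
\partial_t E\leq C_q\,\|Sv(\cdot,t)\|_{L^q}^{\frac{1}{1-\theta}}\,E ,
\end{equation}
and a short computation gives $\tfrac{1}{1-\theta}=\tfrac{2q}{2q-3}=p$, which is where the scaling relation $\tfrac2p+\tfrac3q=2$ enters; the endpoint $q=\infty$, $p=1$ needs no interpolation. Gr\"onwall's inequality now produces the stated exponential bound on $\|u(\cdot,T)\|_{\dot H^1}^2$, and the blow-up statement follows by contraposition: if the time integral were finite then $E$ would stay bounded up to $T_{max}$, contradicting the $\dot H^1$ continuation criterion for mild solutions. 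The only additional care required is justifying \eqref{stDet} and the above manipulations for mild rather than classical solutions, which I would handle via the instantaneous smoothing of mild solutions on $(0,T_{max})$ established earlier in the paper.
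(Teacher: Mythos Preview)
Your proposal is correct and follows essentially the same approach as the paper: the paper proves the pointwise bound $-\det(S)\leq\tfrac12\lambda_2^+|S|^2$ (Lemma~\ref{EigenBound}), runs the H\"older--Sobolev--Young--Gr\"onwall argument to obtain Theorem~\ref{RegCrit}, then proves the spectral inequality $|\lambda_2|\leq|Sv|$ (Lemma~\ref{Spectral}) and deduces this theorem as an immediate corollary. You have simply unrolled these steps into a single direct argument, with the same key ingredients in the same order.
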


Note that like the Prodi-Serrin-Ladyzhenskaya regularity criterion, the regularity criteria we prove on $\lambda_2^+$ and $\partial_3 u+ \nabla u_3$ are critical with respect to scaling. The reason we require that $\frac{2}{p}+\frac{3}{q}=2,$ not $\frac{2}{p}+\frac{3}{q}=1$ is because $\lambda_2$ is an eigenvalue of $S,$ and therefore scales like $\nabla \otimes u,$ not like $u.$ In addition, both regularity criteria can be generalized to the Navier-Stokes equation with an external force $f\in L^2_t L^2_x,$ which will be discussed in section 5, but is left out of the introduction for the sake of brevity.

In section 2, we will derive the evolution equation for the strain tensor. In section 3, we will consider the relationship between the strain and the vorticity, and will prove a Hilbert space isometry between them. In section 4, we will use the evolution equation for the strain tensor and the Hilbert space isometry to prove the simplified growth identity for enstrophy. In section 5, we will prove Theorem \ref{IntroEigenRegCrit}, the regularity criterion in terms of $\lambda_2^+$ and obtain new one direction regularity criteria as immediate corollaries. In section 6, we will prove the existence and stability of blowup for a toy model ODE of the strain evolution equation, and examine the asymptotics of the spectrum of eigenvalues approaching blowup. 

Finally, we will note that while we have proven our results on the whole space, they apply equally on the torus, with more or less identical proofs. The only difference will be that some of the constants may have different values, as the sharp Sobolev constants may be different on the torus.

\begin{remark}
Following submission of this paper, the author learned of previous work by Dongho Chae on the role of the eigenvalues of the strain matrix in enstrophy growth in the context of the Euler equation \cite{ChaeStrain}.
In this paper, Chae proves that sufficiently smooth solutions to the Euler equation satisfy the following growth identity for enstrophy:
\begin{equation}\partial_t\|S(\cdot,t)\|_{L^2}^2=-4\int \det(S).\end{equation}
This is analogous to what we have proven for the growth of enstrophy for solutions of the Navier-Stokes equation \eqref{stDet} without the dissipation term, because the Euler equation has no viscosity. The methods used are somewhat different than ours; in particular the constraint space for the strain tensor and the evolution equation for the strain tensor are not used in \cite{ChaeStrain}. While it is possible to establish the identity \eqref{stDet} without an analysis of the constraint space, we expect the results characterizing the constraint space in this paper, particularly Proposition \ref{StrainSpace} and Proposition \ref{isometry}, to be useful in future investigations. Chae also proves the $q=+\infty$ case of the regularity criterion in Theorem \ref{IntroEigenRegCrit}, but this criterion is new for the rest of the range of parameters. We will discuss the relationship between our method of proof and that in \cite{ChaeStrain} in more detail after we have proven the identity \eqref{stDet}, which is Corollary \ref{DET2} in this paper. The author would like to thank Alexander Kiselev for bringing Chae's paper to his attention.

\end{remark}

\section{Evolution equation for the strain tensor}
We will begin this section by deriving the Navier-Stokes strain equation \eqref{NavierStrain} in three spatial dimensions.
\begin{proposition}[Strain reformulation of the dynamics]
Suppose $u$ is a classical solution to the Navier-Stokes equation.  Then S$=\nabla_{sym}(u)$ is a classical solution to the Navier-Stokes strain equation
\begin{equation}
\partial_t S+ (u \cdot \nabla)S  -\Delta S+S^2+
\frac{1}{4}\omega \otimes \omega- \frac{1}{4}|\omega|^2I_3+\Hess(p)=0.
\end{equation}
\end{proposition}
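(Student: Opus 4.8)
The plan is to differentiate the momentum equation and then extract its symmetric part. Writing the Navier-Stokes momentum equation componentwise as $\partial_t u_k - \Delta u_k + u_l\partial_l u_k + \partial_k p = 0$ (with summation over repeated indices), I would first apply $\partial_i$ to the $k$-th equation. Since $\partial_i$ commutes with $\partial_t$ and $\Delta$, and since $\partial_i(u_l \partial_l u_k) = (\partial_i u_l)(\partial_l u_k) + u_l \partial_l(\partial_i u_k)$, this produces an evolution equation for the full gradient tensor,
\begin{equation}
\partial_t(\nabla\otimes u) - \Delta(\nabla\otimes u) + (u\cdot\nabla)(\nabla\otimes u) + (\nabla\otimes u)^2 + \Hess(p) = 0,
\end{equation}
where the quadratic term arises from $(\partial_i u_l)(\partial_l u_k) = [(\nabla\otimes u)^2]_{ik}$ under the convention $(\nabla\otimes u)_{ij} = \partial_i u_j$, and where $\Hess(p)_{ik} = \partial_i\partial_k p$. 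Each manipulation here is justified classically because $u$ is assumed to be a classical solution.

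Next I would take the symmetric part of this tensor identity. The linear operators $\partial_t$, $-\Delta$, and $(u\cdot\nabla)$ commute with symmetrization and therefore contribute precisely $\partial_t S$, $-\Delta S$, and $(u\cdot\nabla)S$; the Hessian $\Hess(p)$ is already symmetric and so is left unchanged. The entire remaining content of the proposition thus reduces to identifying the symmetric part of the quadratic term $(\nabla\otimes u)^2 = (S+A)^2 = S^2 + SA + AS + A^2$.

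The main work—though it is really where the favorable algebraic structure advertised in the introduction manifests itself—is this decomposition of the quadratic term. Here $S^2$ is symmetric as the square of a symmetric matrix, while the cross terms satisfy $(SA+AS)^T = A^T S^T + S^T A^T = -AS - SA$, so that $SA+AS$ is antisymmetric and drops out entirely upon symmetrization. It then remains only to compute $A^2$: using the representation \eqref{AntiVort}, I would write $A = -\tfrac{1}{2}[\omega]_{\times}$, where $[\omega]_{\times}$ is the standard cross-product matrix, and invoke the identity $[\omega]_{\times}^2 = \omega\otimes\omega - |\omega|^2 I_3$ to obtain
\begin{equation}
A^2 = \tfrac{1}{4}\left(\omega\otimes\omega - |\omega|^2 I_3\right);
\end{equation}
equivalently, one multiplies the matrix in \eqref{AntiVort} by itself directly. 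Collecting terms, the symmetric part of the quadratic nonlinearity is exactly $S^2 + \tfrac{1}{4}\omega\otimes\omega - \tfrac{1}{4}|\omega|^2 I_3$, and substituting this back into the symmetrized gradient equation yields precisely the Navier-Stokes strain equation, completing the proof.
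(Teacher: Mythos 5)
Your proof is correct, and it reaches the strain equation by a somewhat different route than the paper. The paper applies $\nabla_{sym}$ directly to the momentum equation and then identifies $\nabla_{sym}\left((u\cdot\nabla)u\right)=(u\cdot\nabla)S+S^2+A^2$ by explicit index computation: it expands $\left(S^2\right)_{ij}$ and $\left(A^2\right)_{ij}$ in terms of partial derivatives and matches their sum against the expanded derivative of the nonlinear term, and it evaluates $A^2$ by squaring the matrix in \eqref{AntiVort}. You instead pass through the evolution equation for the full gradient tensor $\nabla\otimes u$ and dispose of the cross terms by pure matrix algebra: $(SA+AS)^T=-(SA+AS)$, so symmetrization leaves exactly $S^2+A^2$, and you evaluate $A^2$ via the cross-product identity $[\omega]_\times^2=\omega\otimes\omega-|\omega|^2I_3$. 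The mathematical content is the same---everything hinges on the symmetric part of $(\nabla\otimes u)^2$ being $S^2+A^2$---but your organization makes the structural reason visible (the cross terms die by symmetry alone, with no index bookkeeping), and your intermediate gradient equation is a useful object in its own right: taking its antisymmetric part instead of its symmetric part yields the matrix form of the vorticity equation \eqref{NavierVorticity}, so both reformulations fall out of a single computation. The paper's version, by contrast, never introduces the intermediate tensor equation and stays entirely at the level of components, which makes it more self-contained but obscures why the cross terms cancel. Both computations are fully justified for classical solutions, where the interchanges of partial derivatives you invoke are legitimate.
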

\begin{proof}
We begin by applying the operator $\nabla_{sym}$ to the Navier-Stokes equation \eqref{Navier}; we find immediately that
\begin{equation}
\partial_t S -\Delta S +\Hess(p)+\nabla_{sym}\left((u \cdot \nabla)u \right)=0.
\end{equation}
It remains to compute $\nabla_{sym}\left( (u \cdot \nabla)u \right).$
\begin{equation}
\nabla_{sym}\left( (u \cdot \nabla)u \right)_{ij}= \frac{1}{2} \partial_{x_i} \sum_{k=1}^3 u_k \frac{\partial u_j}{\partial x_k} +\frac{1}{2} \partial_{x_j} \sum_{k=1}^3 u_k \frac{\partial u_i}{\partial x_k}. 
\end{equation}
\begin{equation}
\nabla_{sym}\left( (u \cdot \nabla)u \right)_{ij}= \sum_{k=1}^3 u_k \partial_{x_k} \left(\frac{1}{2}\left(\frac{\partial u_j}{\partial x_i}+\frac{\partial u_i}{\partial x_j}\right)  \right) + \frac{1}{2}\sum_{k=1}^3 
\frac{\partial u_k}{\partial x_i}\frac{\partial u_j}{\partial x_k}+ \frac{\partial u_i}{\partial x_k} \frac{\partial u_k}{\partial x_j}.
\end{equation}
We can see from our definitions of $S$ and $A$ that
\begin{align}
\left(S^2\right)_{ij}&=\frac{1}{4} \sum_{k=1}^3 \left(\frac{\partial u_k}{\partial x_i}+\frac{\partial u_i}{\partial x_k}\right)
\left(\frac{\partial u_j}{\partial x_k}+\frac{\partial u_k}{\partial x_j}\right)\\
&=
\frac{1}{4}\sum_{k=1}^3 
\frac{\partial u_k}{\partial x_i}\frac{\partial u_j}{\partial x_k}+ \frac{\partial u_i}{\partial x_k} \frac{\partial u_k}{\partial x_j}
+\frac{\partial u_k}{\partial x_i}\frac{\partial u_k}{\partial x_j}+ \frac{\partial u_i}{\partial x_k} \frac{\partial u_j}{\partial x_k},
\end{align}
and
\begin{align}
\left(A^2\right)_{ij}&= \frac{1}{4} \sum_{k=1}^3 \left(\frac{\partial u_k}{\partial x_i}-\frac{\partial u_i}{\partial x_k}\right)
\left(\frac{\partial u_j}{\partial x_k}-\frac{\partial u_k}{\partial x_j}\right)\\
&=
\frac{1}{4}\sum_{k=1}^3 
\frac{\partial u_k}{\partial x_i}\frac{\partial u_j}{\partial x_k}+ \frac{\partial u_i}{\partial x_k} \frac{\partial u_k}{\partial x_j}
-\frac{\partial u_k}{\partial x_i}\frac{\partial u_k}{\partial x_j}- \frac{\partial u_i}{\partial x_k} \frac{\partial u_j}{\partial x_k}.
\end{align}
Taking the sum of these two equation, we find that
\begin{equation}
\left ( S^2+A^2\right )_{ij}=  \frac{1}{2}\sum_{k=1}^3 
\frac{\partial u_k}{\partial x_i}\frac{\partial u_j}{\partial x_k}+ \frac{\partial u_i}{\partial x_k} \frac{\partial u_k}{\partial x_j}.
\end{equation}
From this we can conclude that
\begin{equation}
\nabla_{sym}\left( (u \cdot \nabla)u \right)= (u \cdot \nabla)S+  S^2+A^2.
\end{equation}
Recall that
\begin{equation}
A=\frac{1}{2} \left (
\begin{matrix}
0 & \omega_3 & -\omega_2 \\
-\omega_3 & 0 & \omega_1 \\
\omega_2 & -\omega_1 & 0 \\
\end{matrix}
\right ),
\end{equation}
 so we can express $A^2$ as
\begin{equation}
A^2= \frac{1}{4}\omega \otimes \omega-\frac{1}{4}|\omega|^2 I_3.
\end{equation}
This concludes the proof.
\end{proof}

We also can see that $\tr(S)=\nabla \cdot u=0$, so in order to maintain the divergence free structure of the flow, we require that the strain tensor be trace free.  For the vorticity the only consistency condition is that the vorticity be divergence free.  Any divergence free vorticity can be inverted back to a unique velocity field, assuming suitable decay at infinity, with $u=\nabla \times (- \Delta)^{-1} w$.  This is not true of the strain tensor, for which an additional consistency condition is required.

If we know the strain tensor $S$, this is enough for us to reconstruct the flow.  We take
\begin{align} \label{ConsistencyStep}
-2 \divr(S)&= -\Delta u -\nabla (\nabla \cdot u)\\
&= -\Delta u.
\end{align}
Therefore we find that
\begin{equation} \label{StreamReconstruction}
u=-2 \divr (-\Delta)^{-1}S.
\end{equation}
This allows us to reconstruct the flow $u$ from the strain tensor $S$, but it doesn't guarantee that if we start with a general trace free symmetric matrix, the $u$ we reconstruct will actually have this symmetric matrix as its strain tensor.  We will need to define a consistency condition guaranteeing that the strain tensor is actually the symmetric part of the gradient of some divergence free vector field.  This condition for the strain equation will play the same role that the divergence free condition plays in the vorticity equation. We will now define the subspace of strain matrices 
$L^2_{st} \subset L^2(\mathbb{R}^3;S^{3 \times 3}).$
\begin{definition}[Strain subspace]
We will define the subspace of strain matrices to be\begin{equation}
L^2_{st}=\left \{ \frac{1}{2}\nabla \otimes u+\frac{1}{2}(\nabla \otimes u)^*:
u\in \dot{H}^1 \left (\mathbb{R}^3; \mathbb{R}^3 \right), \nabla \cdot u=0 \right \}.
\end{equation}
\end{definition}

This subspace of $L^2(\mathbb{R}^3;S^{3 \times 3})$ can in fact be characterized by a partial differential equation, although in this case, it is significantly more complicated than the equation $\nabla \cdot u=0,$ that characterizes the space of divergence free vector fields.
\begin{proposition}[Characterization of the strain subspace] \label{StrainSpace}
Suppose $S \in L^2(\mathbb{R}^3;S^{3 \times 3}).$ Then $S \in L^2_{st}$ if and only if
\begin{align} \label{Consistency Condition}
-\Delta S +2 \nabla_{sym}\left (\divr(S) \right )&=0,\\
\tr(S)&=0.
\end{align}

Because by hypothesis we only have $S \in L^2,$ we will consider $S$ to be a solution to \eqref{Consistency Condition} if the condition is satisfied pointwise almost everywhere in Fourier space, that is if
\begin{equation}
|\xi|^2 \hat{S}(\xi)-(\xi \otimes \xi) \hat{S}(\xi)- \hat{S}(\xi)(\xi \otimes \xi)=0,
\end{equation}
almost everywhere $\xi \in \mathbb{R}^3.$
Also, note that the matrix PDE \eqref{Consistency Condition} can be written out in components as
\begin{equation} \label{sym3}
-\Delta S_{ij}+\sum_{k=1}^3 \partial_{x_i} \partial_{x_k} S_{kj}+ 
\partial_{x_j} \partial_{x_k} S_{ki}=0.
\end{equation}
\end{proposition}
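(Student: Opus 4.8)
The plan is to prove both implications by passing to Fourier space, since the hypothesis only gives $S\in L^2$ and the consistency condition is stated pointwise almost everywhere in $\xi$. Writing $\hat{S}(\xi)$ for the matrix-valued Fourier transform, everything reduces to the algebraic identity
\begin{equation}
|\xi|^2\hat{S}(\xi)-(\xi\otimes\xi)\hat{S}(\xi)-\hat{S}(\xi)(\xi\otimes\xi)=0
\end{equation}
holding for almost every $\xi\neq 0$. The reconstruction formula \eqref{StreamReconstruction} reads $\hat{u}(\xi)=\frac{-2i}{|\xi|^2}\hat{S}(\xi)\xi$ in Fourier space, and this is the bridge between $u$ and $S$ in both directions.

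For the forward implication, suppose $S=\nabla_{sym}(u)$ with $u\in\dot{H}^1$ and $\nabla\cdot u=0$. In Fourier space $\hat{S}=\frac{i}{2}(\xi\otimes\hat{u}+\hat{u}\otimes\xi)$ and incompressibility reads $\xi\cdot\hat{u}=0$. Trace-freeness is immediate, since $\tr\hat{S}=i(\xi\cdot\hat{u})=0$. For the PDE, I would substitute this expression for $\hat{S}$ into $(\xi\otimes\xi)\hat{S}+\hat{S}(\xi\otimes\xi)$; the key point is that each product of the form $(\xi\otimes\xi)(\hat{u}\otimes\xi)$ or $(\xi\otimes\hat{u})(\xi\otimes\xi)$ carries a factor $\xi\cdot\hat{u}=0$ and hence drops out, while the two surviving terms sum to $\frac{i}{2}|\xi|^2(\xi\otimes\hat{u}+\hat{u}\otimes\xi)=|\xi|^2\hat{S}$, which is exactly the consistency identity. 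Equivalently and more conceptually, one may apply $\nabla_{sym}$ to the already-established identity \eqref{ConsistencyStep}, $-2\divr(S)=-\Delta u$, and commute $\nabla_{sym}$ past the scalar operator $\Delta$ to obtain $2\nabla_{sym}(\divr S)=\Delta S$ at once.

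For the reverse implication, I would \emph{define} $u$ by $\hat{u}=\frac{-2i}{|\xi|^2}\hat{S}\xi$ and verify three things. First, $u\in\dot{H}^1$: since $|\xi|\,|\hat{u}|=\frac{2}{|\xi|}|\hat{S}\xi|\leq 2|\hat{S}|$ for the Frobenius norm, Plancherel gives $\|u\|_{\dot{H}^1}\leq 2\|S\|_{L^2}$, so the singular multiplier $1/|\xi|^2$ causes no difficulty at the origin for the homogeneous seminorm. Second, $\nabla\cdot u=0$: here $(\nabla\cdot u)^{\wedge}=i\xi\cdot\hat{u}=\frac{2}{|\xi|^2}\xi^{T}\hat{S}\xi$, and one extracts $\xi^{T}\hat{S}\xi=0$ by multiplying the consistency identity on the right by $\xi$ and using $(\xi\otimes\xi)\xi=|\xi|^2\xi$. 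Third, and most importantly, $\nabla_{sym}(u)=S$: computing $(\nabla_{sym}u)^{\wedge}_{ij}=\frac{i}{2}(\xi_i\hat{u}_j+\xi_j\hat{u}_i)$ and inserting the definition of $\hat{u}$ turns the right-hand side into $\frac{1}{|\xi|^2}((\xi\otimes\xi)\hat{S}+\hat{S}(\xi\otimes\xi))$, which equals $\hat{S}$ precisely by the consistency condition.

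The main obstacle is this last equivalence in the reverse direction: the reconstruction $u=-2\divr(-\Delta)^{-1}S$ always produces \emph{some} vector field, but it returns $S$ as its symmetric gradient if and only if $\hat{S}$ solves the algebraic identity above, so the entire content of the proposition is that this identity is exactly the obstruction. A secondary subtlety is showing that the reconstructed $u$ is genuinely divergence free, which is not automatic and relies on contracting the matrix identity with $\xi$ to produce $\xi^{T}\hat{S}\xi=0$. I would also remark that the condition $\tr(S)=0$ is in fact implied by the consistency identity, since taking its trace gives $|\xi|^2\tr\hat{S}=2\xi^{T}\hat{S}\xi=0$; listing both conditions is harmless and keeps the parallel with the divergence-free constraint of the vorticity formulation transparent.
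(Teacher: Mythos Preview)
Your proof is correct and follows essentially the same route as the paper: reconstruct $u=-2\divr(-\Delta)^{-1}S$, check $u\in\dot H^1$, $\nabla\cdot u=0$, and $\nabla_{sym}u=S$; for the forward direction, apply $\nabla_{sym}$ to $-2\divr(S)=-\Delta u$. The only real difference is presentational: the paper carries out the reverse direction at the level of differential operators (taking the trace of \eqref{sym3} and using $\tr(S)=0$ to conclude $(\divr)^2 S=0$), while you do the same computations pointwise in Fourier space. Your observation that the PDE alone already forces $\xi^T\hat S\xi=0$ and hence $\tr(S)=0$ is a small bonus the paper does not record; it shows the trace condition is redundant, though harmless to list.
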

\begin{proof}
First suppose $S \in L^2_{st},$ so there exists a $u \in \dot{H}^1,$ $\nabla \cdot u=0,$ such that
\begin{equation} \label{sym}
S=\nabla_{sym}u.
\end{equation}
As we have already shown, $\tr(S)=\nabla \cdot u=0.$
Next we will take the divergence of \eqref{sym}, and find that,
\begin{align}
-2 \divr(S)&= -2 \divr (\nabla_{sym}u)\\
&=-\Delta u- \nabla (\nabla \cdot u)\\
&=-\Delta u. \label{sym2}
\end{align}
Applying $\nabla_{sym}$ to \eqref{sym2} we find that
\begin{align}
-2 \nabla_{sym}(\divr(S))&=\nabla_{sym}(-\Delta u)\\
&= -\Delta S,
\end{align}
so the condition \eqref{Consistency Condition} is also satisfied.

Now suppose $tr(S)=0$ and $-\Delta S+ 2 \nabla_{sym}(\divr(S))=0.$ Define $u$ by
\begin{equation}
u= (-\Delta)^{-1}(-2 \divr(S)).
\end{equation}
Applying $\nabla_{sym}$ to this definition we find that
\begin{align}
\nabla_{sym}u&=(-\Delta)^{-1}\left( -2 \nabla_{sym}(\divr(S))\right )\\
&=(-\Delta)^{-1}(-\Delta S)\\
&=S.
\end{align}
Clearly $u \in \dot{H}^1$ because $S \in L^2$ and $(-\Delta)^{-1}(-2 \divr)$ is a pseudo-differential operator with order $-1.$
It only remains to show that $\nabla \cdot u=0.$
Next we will take the trace of \eqref{sym3} and find that
\begin{equation}
(\divr)^2(S)= \sum_{i,j=1}^3 \partial_{x_i} \partial_{x_j}S_{ij}=0.
\end{equation}
Using this we compute that
\begin{equation}
\nabla \cdot u= (-\Delta)^{-1} (-2 (\divr)^2(S))=0.
\end{equation}
This completes the proof.
\end{proof}
Note that the the consistency condition \eqref{Consistency Condition} is linear, so the set of matrices satisfying it form a subspace of $L^2$. The Navier-Stokes equation \eqref{Navier} and the vorticity equation \eqref{NavierVorticity} can best be viewed not as systems of equations, but as evolution equations on the space of divergence free vector fields. Similarly, we can view the Navier-Stokes strain equation \eqref{NavierStrain} as an evolution equation on $L^2_{st}.$

The Navier-Stokes strain equation has already been examined in \cite{Constantin,VorticityAlignmentResults}, however the consistency condition \eqref{Consistency Condition} does not play a role in this analysis.
In \cite{VorticityAlignmentResults}, the authors focus on the relationship between vorticity and the strain tensor in enstrophy production, as the strain tensor and vorticity are related by a linear zero order pseudo-differential operator,
$S=\nabla_{sym} (- \Delta)^{-1}\nabla \times \omega.$
However, the consistency condition is actually very useful in dealing with the evolution of the strain tensor, because a number of the terms in the evolution equation \eqref{NavierStrain} are actually in the orthogonal compliment of $L^2_{st}$ with respect to the $L^2$ inner product. This will allow us to prove an identity for enstrophy growth involving only the strain, where previous identities involved the interaction of the strain and the vorticity. We will now make an observation about what matrices in $L^2(\mathbb{R}^3;S^{3 \times 3})$ are in the orthogonal complement of $L^2_{st}$ with respect to the $L^2$ inner product.  
\begin{proposition}[Orthogonal subspaces] \label{isometry}
For all $f \in \dot{H}^2(\mathbb{R}^3),$ for all $g \in L^2(\mathbb{R}^3)$, and for all $S \in L^2_{st}$
\begin{equation}
\left < S, g I_3\right >=0,
\end{equation}

\begin{equation}
\left < S, \Hess(f)\right >=0.
\end{equation}
\end{proposition}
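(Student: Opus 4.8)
The plan is to treat the two identities separately, the first being essentially immediate and the second carrying the real content. For the first identity I would first note that the pairing is well defined, since $\|gI_3\|_{L^2}^2 = 3\|g\|_{L^2}^2 < \infty$, and then simply expand the matrix inner product: because $I_3$ is diagonal,
\[
\left\langle S, gI_3\right\rangle = \int_{\mathbb{R}^3} g(x)\sum_{i=1}^3 S_{ii}(x)\,dx = \int_{\mathbb{R}^3}g(x)\,\tr(S)(x)\,dx.
\]
Every $S\in L^2_{st}$ satisfies $\tr(S)=\nabla\cdot u = 0$ (this is built into the definition of $L^2_{st}$ and is also part of the characterization in Proposition \ref{StrainSpace}), so this integral vanishes for every $g\in L^2$.

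For the second identity I would again begin by checking that the pairing makes sense: for $f\in\dot H^2$ one has $\sum_{i,j}\|\partial_{x_i}\partial_{x_j} f\|_{L^2}^2 = \int_{\mathbb{R}^3}|\xi|^4|\hat f(\xi)|^2\,d\xi = \|f\|_{\dot H^2}^2 < \infty$, so $\Hess(f)\in L^2(\mathbb{R}^3;S^{3\times3})$. The main obstacle is that $S$ is assumed only to lie in $L^2$, so I would avoid integrating by parts twice in physical space (which would require extra smoothness to justify) and instead argue entirely on the Fourier side, consistent with the way the consistency condition is phrased in Proposition \ref{StrainSpace}. Taking Fourier transforms gives $\widehat{\Hess(f)}(\xi) = -(\xi\otimes\xi)\hat f(\xi)$, so by Plancherel
\[
\left\langle S,\Hess(f)\right\rangle = -\int_{\mathbb{R}^3}\overline{\hat f(\xi)}\,\bigl(\xi^\top \hat S(\xi)\,\xi\bigr)\,d\xi,
\]
and the problem reduces to showing that the scalar quadratic form $\xi^\top\hat S(\xi)\xi = \sum_{i,j}\xi_i\xi_j\hat S_{ij}(\xi)$ vanishes for almost every $\xi$.

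I expect this last reduction to be where the substance lies, and there are two equivalent ways to finish it. The most direct is to use $S=\nabla_{sym}u$ with $\nabla\cdot u = 0$: then $\hat S_{ij} = \tfrac{i}{2}(\xi_i\hat u_j + \xi_j\hat u_i)$, and a short computation gives $\xi^\top\hat S(\xi)\xi = i|\xi|^2\,(\xi\cdot\hat u(\xi))$, which is zero because $i\,\xi\cdot\hat u = \widehat{\nabla\cdot u} = 0$. Alternatively, and more in keeping with the spirit of the paper, I would contract the Fourier form of the consistency condition from Proposition \ref{StrainSpace}, namely $|\xi|^2\hat S - (\xi\otimes\xi)\hat S - \hat S(\xi\otimes\xi) = 0$, on the right by $\xi$; using $(\xi\otimes\xi)\xi = |\xi|^2\xi$ the two terms $|\xi|^2\hat S\xi$ cancel and one is left with $\xi\,(\xi^\top\hat S\xi) = 0$, forcing $\xi^\top\hat S\xi = 0$ for $\xi\neq 0$. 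Either way the integrand above vanishes almost everywhere and the identity follows. As a sanity check, the same conclusion drops out of a formal physical-space computation, since integrating by parts twice turns $\left\langle S,\Hess(f)\right\rangle$ into $\left\langle (\divr)^2 S, f\right\rangle$ and $(\divr)^2 S = \divr\divr(\nabla_{sym}u) = \tfrac12\Delta(\nabla\cdot u) = 0$ was already recorded in the proof of Proposition \ref{StrainSpace}; this is reassuring but less clean at the $L^2$ level than the Fourier argument.
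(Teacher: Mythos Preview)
Your proof is correct and follows essentially the same approach as the paper: both identities are handled exactly as you do, with the first reducing to $\tr(S)=0$ and the second proved on the Fourier side via Plancherel, using that $\sum_{i,j}\xi_i\xi_j\hat S_{ij}(\xi)=0$ almost everywhere. Your derivation of this Fourier identity (either directly from $\xi\cdot\hat u=0$ or by contracting the consistency condition) is a bit more detailed than the paper's, which simply cites it as a property established in the proof of Proposition~\ref{StrainSpace}, but the argument is the same.
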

\begin{proof}
First we'll consider the case of $gI_3$.  Fix $S \in L^2_{st}$ and we'll take the inner product
\begin{equation}
\left <gI_3,S \right >=\int_{\mathbb{R}^3}\sum_{i,j=1}^3 gI_{ij} S_{ij}=\int_{\mathbb{R}^3} tr(S)g=0.
\end{equation}
In order to show that $\Hess(f) \in \left (L^2_{st}\right )^\perp$, we will use the property that for $S \in L^2_{st}$
\begin{equation}
tr\left ( (\nabla \otimes \nabla) S \right )= \sum_{i,j=1}^3 \partial_{x_i} \partial_{x_j} S_{ij}=0.
\end{equation}
Because $S \in L^2$ and therefore $\hat{S}\in L^2$, the above condition can be expressed as
\begin{equation}
\sum_{i,j=1}^3 \xi_i \xi_j \hat{S}_{ij}(\xi)=0,
\end{equation}
almost everywhere $\xi \in \mathbb{R}^3$.
Using the fact that the Fourier transform is an isometry on $L^2,$ and $\Hess(f), S \in L^2$ we compute that
\begin{equation}
\left < \Hess(f),S\right >= \left <\widehat{\Hess(f)},\hat{S} \right >=
-4 \pi^2 \int_{\mathbb{R}^3}\bar{\hat{f}}(\xi)\sum_{i,j=1}^3 \xi_i \xi_j \hat{S}_{ij}(\xi) d\xi=0
\end{equation}
\end{proof}
This means that as long as $u$ is sufficiently regular, $\Hess(p)$ and $-\frac{1}{4}|\omega|^2 I_3$ are in the orthogonal compliment of $L^2_{st}.$ This fact will play a key role in the new identity for enstrophy growth that we will prove in section 4.

\section{The relationship between strain and vorticity}
We have already established in \eqref{StreamReconstruction} that $u=-2 \divr (-\Delta)^{-1}S$.  The antisymmetric part of the gradient tensor then, can be reconstructed applying a zeroth order pseudo-differential operator to $S$.  We find that
\begin{equation}
A=(\nabla \otimes \nabla) (- \Delta)^{-1} S - \left ((\nabla \otimes \nabla) (- \Delta)^{-1} S \right)^*.
\end{equation}
Because this is a zeroth order operator related to the Riesz transform, it is bounded from $L^p$ to $L^p$ for $1<p<+\infty$, but we will only have Calderon-Zygmund type estimates, so our control will be very bad.  We can say something much stronger in the case of $L^2$, and in fact for every Hilbert space $\dot{H}^\alpha, -\frac{3}{2} <\alpha <\frac{3}{2}$.  
\begin{proposition}[Hilbert space isometries] \label{TensorIsometry}
For all $-\frac{3}{2} <\alpha <\frac{3}{2},$ and for all $u$ divergence free in the sense that 
$\xi \cdot \hat{u}(\xi)=0$ almost everywhere,
\begin{equation}
\|S\|_{\dot{H^\alpha}}^2=\|A\|_{\dot{H^\alpha}}^2=\frac{1}{2}\|\omega\|_{\dot{H^\alpha}}^2=\frac{1}{2}\|\nabla \otimes u\|_{\dot{H}^{\alpha}}^2.
\end{equation}
\end{proposition}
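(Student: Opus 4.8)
The plan is to reduce all four identities to a single pointwise identity in frequency space and then integrate against the weight defining the $\dot{H}^\alpha$ norm. First I would pass to the Fourier side using the convention already implicit in the paper (so that $\widehat{\partial_{x_k}f}(\xi)=2\pi i\,\xi_k\hat{f}(\xi)$). Since $S$, $A$, and $\nabla\otimes u$ are obtained from $u$ by first-order constant-coefficient operators, their Fourier transforms are explicit algebraic expressions in $\xi$ and $\hat{u}(\xi)$:
\[
\widehat{(\nabla\otimes u)}_{ij}(\xi)=2\pi i\,\xi_i\hat{u}_j(\xi),\qquad
\hat{S}_{ij}(\xi)=\pi i\bigl(\xi_i\hat{u}_j+\xi_j\hat{u}_i\bigr),\qquad
\hat{A}_{ij}(\xi)=\pi i\bigl(\xi_i\hat{u}_j-\xi_j\hat{u}_i\bigr).
\]
Everything then comes down to computing the pointwise squared magnitudes $|\hat{S}(\xi)|^2=\sum_{i,j}|\hat{S}_{ij}(\xi)|^2$, and likewise for $\hat{A}$ and $\widehat{\nabla\otimes u}$.

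Expanding $|\hat{S}(\xi)|^2$, the diagonal contributions sum to $2\pi^2|\xi|^2|\hat{u}(\xi)|^2$, while the cross terms collapse, after reindexing, to a multiple of $|\xi\cdot\hat{u}(\xi)|^2$. This is exactly the place where the divergence-free hypothesis enters: $\xi\cdot\hat{u}(\xi)=0$ annihilates the cross term. The identical computation for $\hat{A}$ yields the same diagonal contribution but with the cross term appearing with the opposite sign, so it vanishes for the same reason. One therefore obtains $|\hat{S}(\xi)|^2=|\hat{A}(\xi)|^2=2\pi^2|\xi|^2|\hat{u}(\xi)|^2$ and $|\widehat{\nabla\otimes u}(\xi)|^2=4\pi^2|\xi|^2|\hat{u}(\xi)|^2$, hence $|\hat{S}|^2=|\hat{A}|^2=\tfrac12|\widehat{\nabla\otimes u}|^2$ for almost every $\xi$. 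The vorticity factor requires no frequency analysis at all: the representation \eqref{AntiVort} is an entrywise linear relation between $A$ and $\omega$ that gives the algebraic identity $|A|^2=\tfrac12|\omega|^2$ at every point, and by linearity of the Fourier transform this passes to $|\hat{A}(\xi)|^2=\tfrac12|\hat{\omega}(\xi)|^2$ almost everywhere.

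With the four pointwise identities $|\hat{S}|^2=|\hat{A}|^2=\tfrac12|\hat{\omega}|^2=\tfrac12|\widehat{\nabla\otimes u}|^2$ established, I would multiply through by $\bigl(4\pi^2|\xi|^2\bigr)^\alpha$, the Fourier symbol of $(-\Delta)^\alpha$, and integrate in $\xi$; since this common weight factors out of every term, Plancherel delivers all three equalities of $\dot{H}^\alpha$ norms at once. The algebra here is entirely routine, so the only genuine points to control are, first, the role of the divergence-free condition, which is precisely what forces $|\hat{S}|^2=|\hat{A}|^2$ rather than two distinct quantities, and second, the hypothesis $-\tfrac32<\alpha<\tfrac32$. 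I would not expect this range restriction to enter the pointwise computation; rather it is the standard range in which $\dot{H}^\alpha(\mathbb{R}^3)$ is a bona fide Hilbert space realized inside the tempered distributions, with both the low-frequency and high-frequency behavior of the weight $|\xi|^{2\alpha}$ under control. Imposing it guarantees that all four homogeneous norms are simultaneously meaningful and finite, and hence that the Plancherel manipulation is legitimate; this well-posedness issue, rather than any computational difficulty, is the one place I would be careful.
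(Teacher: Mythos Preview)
Your proof is correct. Both you and the paper work on the Fourier side and invoke the divergence-free condition $\xi\cdot\hat{u}(\xi)=0$ at the decisive step, but the execution differs. You expand $|\hat{S}(\xi)|^2$ and $|\hat{A}(\xi)|^2$ directly and observe that the cross terms are $\pm 2\pi^2|\xi\cdot\hat{u}|^2$, hence both vanish under incompressibility. The paper instead argues structurally: it first uses the pointwise orthogonality of symmetric and antisymmetric matrices to get $|\nabla\otimes u|^2=|S|^2+|A|^2$ (which needs no hypothesis), then uses the cross-product identity $|\xi\times\hat{u}|=|\xi|\,|\hat{u}|$ for $\xi\perp\hat{u}$ to obtain $|\hat{\omega}|=|\widehat{\nabla\otimes u}|$, and only then combines these with the algebraic relation $|A|^2=\tfrac12|\omega|^2$ to close the chain. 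Your route is more hands-on and makes the role of incompressibility completely transparent in one line; the paper's route has the minor advantage that two of its three steps hold without the divergence-free assumption, isolating exactly where it is used. Your reading of the range $-\tfrac32<\alpha<\tfrac32$ as a well-posedness constraint on $\dot{H}^\alpha(\mathbb{R}^3)$ rather than a computational one is also correct and matches the paper's implicit treatment.
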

\begin{proof}
First fix $s,$ $-\frac{3}{2}<s<\frac{3}{2}.$ We will begin relating the $H^s$ norms of the anti-symmetric part and the vorticity. Recall that 
\begin{equation}
A=\frac{1}{2} \left (
\begin{matrix}
0 & \omega_3 & -\omega_2 \\
-\omega_3 & 0 & \omega_1 \\
\omega_2 & -\omega_1 & 0 \\
\end{matrix}
\right ),
\end{equation}
Therefore, for all $x \in \mathbb{R}^3,$ 
\begin{equation}
|(- \Delta)^\frac{s}{2} A(x)|^2= \frac{1}{2}|(- \Delta)^\frac{s}{2} \omega(x)|^2.
\end{equation}
Because in general we have that $\|f\|_{\dot{H}^s}=\|(- \Delta)^\frac{s}{2} f\|_{L^2},$ it immediately follows that
\begin{equation}
\|A\|_{\dot{H}^s}^2=\frac{1}{2}\|\omega\|_{\dot{H}^s}^2.
\end{equation}
Because $u$ is divergence free, in Fourier space 
\begin{align}
|\hat{\omega}(\xi)|&=|2 \pi i \xi \times \hat{u}(\xi)|\\
&= 2 \pi |\xi| |\hat{u}(\xi)|\\
&=|\widehat{\nabla \otimes u}(\xi)|.
\end{align}
From this we can conclude that 
\begin{equation}
\|\omega\|_{\dot{H}^s}^2=\|\nabla \otimes u\|_{\dot{H}^s}^2.
\end{equation}
Finally we will compute
\begin{equation}
\left|(- \Delta)^\frac{s}{2}\left (\nabla \otimes u \right )\right|^2= \tr\left(\left(- \Delta)^\frac{s}{2}S+(- \Delta)^\frac{s}{2}A\right)\left((- \Delta)^\frac{s}{2}S^*+(- \Delta)^\frac{s}{2}A^* \right)\right).
\end{equation}

However, we know that the trace of the product of a symmetric matrix and an antisymmetric matrix is always zero, so we can immediately see that
\begin{equation}
\left|(- \Delta)^\frac{s}{2}\left (\nabla \otimes u \right )\right|^2=\left|(- \Delta)^\frac{s}{2}S\right|^2+\left|(- \Delta)^\frac{s}{2}A\right|^2.
\end{equation}
From this it follows that
\begin{equation}
\|\nabla \otimes u\|_{\dot{H}^s}^2=\|S\|_{\dot{H}^s}^2+\|A\|_{\dot{H}^s}^2,
\end{equation}
but we have already established that 
\begin{equation}
\|A\|_{\dot{H}^s}^2= \frac{1}{2}\|\nabla \otimes u\|_{\dot{H}^s}^2,
\end{equation}
so we can conclude that 
\begin{equation}
\|A\|_{\dot{H}^s}^2=\|S\|_{\dot{H}^s}^2= \frac{1}{2}\|\nabla \otimes u\|_{\dot{H}^s}^2.
\end{equation}
This concludes the proof.
\end{proof}

We have now established all the necessary basics and will proceed to considering enstrophy growth in terms of the strain and vorticity equations.

\section{Enstrophy and the $L^2$ growth of the strain tensor}
Before proceeding further, however, we need to show the existence of solutions in a suitable space. Leray solutions, first developed in the classic paper \cite{Leray}, are not the most well adapted to the Navier-Stokes strain equation, so we will work with mild solutions instead \cite{Kato}. We will begin by defining mild solutions in ${\dot{H^1}}$ to the Navier-Stokes equations, and then adapt this definition for mild solutions in $L^2$ to the Navier-Stokes strain equation and the vorticity equation.

Thus far, we have only considered the Navier-Stokes equation with no external force; therefore it is now necessary to state some definitions involving an external force $f$. For the incompressible Navier-Stokes equations with a force we still require that 
$\nabla \cdot u=0,$ but now the evolution equation is given by
\begin{equation}
\partial_t u+(u \cdot \nabla )u -\Delta u +\nabla p=f.
\end{equation}
The evolution equation for the vorticity is now given by
\begin{equation}
\partial_t \omega+(u \cdot \nabla)\omega -\Delta \omega -S\omega= \nabla \times f,
\end{equation}
and the evolution equation for the strain is given by
\begin{equation}
\partial_t S+ (u \cdot \nabla )S - \Delta S+ S^2+ \frac{1}{4}\omega \otimes \omega
+\frac{1}{4}|\omega|^2I_3+Hess(p)=\nabla_{sym}f.
\end{equation}
We will define a mild solution to the Navier-Stokes equation with an external force as follows.

\begin{definition}[Mild velocity solutions]
\label{ExtForce}
Suppose $u \in C\left([0,T];\dot{H}^1\right)
\cap L^2\left([0,T];\dot{H}^2\left(
\mathbb{R}^3\right)\right).$
Then $u$ is a mild solution to the Navier-Stokes equation with external force
$f\in L^2\left([0,T];L^2\left(\mathbb{R}^3
\right)\right)$if $\nabla \cdot u=0$ in $L^2$ and 
\begin{equation}
u(\cdot,t)=e^{t \Delta}u^0
+\int_0^t e^{(t-\tau)\Delta}
\left((-u \cdot \nabla)u-\nabla p+ f\right)
(\cdot, \tau)d\tau,
\end{equation}
where $p$ is defined in terms of $u$ and $f$ by convolution with the Poisson kernel
\begin{equation}
p = (- \Delta)^{-1}\left (\sum_{i,j=1}^3 
\frac{\partial u_j}{\partial x_i} \frac{\partial u_i}{\partial x_j}
-\nabla \cdot f\right ),
\end{equation}
and where $e^{t\Delta}$ is the heat operator given by convolution with the heat kernel; that is to say, $e^{t\Delta}u^0$ is the solution of the heat equation after time $t,$ with initial data $u^0.$ 
\end{definition}

Now we can define a mild solution to the Navier-Stokes strain equation accordingly.

\begin{definition}[Mild strain solutions]
Suppose $S \in C \left([0,T];L^2_{st} \right ) \cap L^2\left ([0,T]:\dot{H}^1(\mathbb{R}^3) \right ).$ Then we will call $S$ a mild solution to the Navier-Stokes strain equation \eqref{NavierStrain} with external force $f\in L^2\left([0,T];L^2\left(\mathbb{R}^3\right)\right)$ if and only if for all $0<t \leq T,$
\begin{multline}
S(\cdot,t)= e^{t \Delta} S^0+
 \int_0^t e^{(t-\tau)\Delta} \\
\left(-(u \cdot \nabla)S-S^2-\frac{1}{4}\omega\otimes \omega +\frac{1}{4}|\omega|^2 I_3 -\Hess(p) +\nabla_{sym}f  \right)(\cdot,\tau)d\tau,
\end{multline}
where $u=- 2 \divr (-\Delta)^{-1}S,$ $\omega=\nabla \times u,$ and $p=(-\Delta)^{-1}\left(|S|^2-\frac{1}{2}|\omega|^2
-\nabla \cdot f\right)$
\end{definition}

It is a classical result that $\dot{H^1}$ mild solutions to the Navier-Stokes equation exist locally in time. We will state this result precisely and then use the result to establish local in time existence of $L^2$ mild solutions to the Navier-Stokes Strain equation.
\begin{theorem}[Mild velocity solutions exist for short times] \label{KATO}
Suppose $f=0$. Then there exists a constant $C>0,$ such that for all $u^0\in \dot{H^1}(\mathbb{R}^3), \nabla \cdot u^0=0,$ for all $0<T<\frac{C}{\|u^0\|_{\dot{H^1}}^4}$, there exists a unique mild solution to the Navier-Stokes equation $u \in C\left([0,T];\dot{H^1}(\mathbb{R}^3) \right)$. 
Furthermore for all $0<\epsilon<T$, $u \in
C\left([\epsilon,T];\dot{H}^\alpha(\mathbb{R}^3)\right )$ for all $\alpha> 1,$
and therefore $u \in C^\infty\left ((0,T]\times \mathbb{R}^3;\mathbb{R}^3\right ).$ 

In the case where $f\neq 0$ for all $u^0 \in \dot{H}^1(\mathbb{R}^3), \nabla \cdot u=0$ and all $f\in L^2_{loc}\left (
(0,T^*);L^2(\mathbb{R}^3) \right )$ there exists $0<T\leq T^*$ and $u\in C\left ([0,T];\dot{H^1}(\mathbb{R}^3) \right )
\cap L^2 \left([0,T];\dot{H}^2
(\mathbb{R}^3)\right )$ such that $u$ is a mild solution to the Navier-Stokes equation. Note that mild solutions with a nonsmooth force are not smooth in general, because the bootstrapping argument for higher regularity will not work in this case.
\end{theorem}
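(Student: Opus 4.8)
The plan is to recast the mild formulation as a fixed-point problem and solve it by the contraction mapping principle, in the spirit of Fujita and Kato. First I would eliminate the pressure: since $u$ is divergence free, $-(u\cdot\nabla)u-\nabla p=-\mathbb{P}\,\divr(u\otimes u)$, where $\mathbb{P}$ is the Leray projection onto divergence-free fields defined by the Poisson formula for $p$ in Definition~\ref{ExtForce}, and because $\mathbb{P}$ commutes with $e^{t\Delta}$ the Duhamel formula (with $f=0$) becomes $u=e^{t\Delta}u^0+B(u,u)$, where
\[
B(u,v)(t)=-\int_0^t e^{(t-\tau)\Delta}\,\mathbb{P}\,\divr\big(u\otimes v\big)(\tau)\,d\tau.
\]
A fixed point of this map is a mild solution in the sense of the definition once $p$ is recovered from $u$. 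I would run the iteration in the subcritical space $X_T=C\big([0,T];\dot{H}^1\big)\cap L^2\big([0,T];\dot{H}^2\big)$, with norm $\|u\|_{X_T}=\sup_{0<t<T}\|u(t)\|_{\dot{H}^1}+\big(\int_0^T\|u(t)\|_{\dot{H}^2}^2\,dt\big)^{1/2}$, which is natural because the parabolic smoothing of $e^{t\Delta}$ sends $\dot{H}^1$ data into $L^2_t\dot{H}^2$.

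The argument rests on two estimates. The linear estimate $\|e^{t\Delta}u^0\|_{X_T}\le C\|u^0\|_{\dot{H}^1}$ holds uniformly in $T$: the $L^\infty_t\dot{H}^1$ part is immediate since $e^{t\Delta}$ is a contraction on $\dot{H}^1$, and the $L^2_t\dot{H}^2$ part follows from $\int_0^\infty|\xi|^4 e^{-2t|\xi|^2}\,dt=\frac{1}{2}|\xi|^2$ on the Fourier side. The crux is the bilinear estimate
\[
\|B(u,v)\|_{X_T}\le C\,T^{1/4}\,\|u\|_{X_T}\|v\|_{X_T}.
\]
To prove it for the $L^\infty_t\dot{H}^1$ component I would bound the integrand using boundedness of $\mathbb{P}$ on $\dot{H}^s$, the smoothing inequality $\|e^{s\Delta}\divr g\|_{\dot{H}^1}\le C s^{-1/2}\|g\|_{\dot{H}^1}$, the product estimate $\|u\otimes v\|_{\dot{H}^1}\le C(\|u\|_{\dot{H}^{3/2}}\|v\|_{\dot{H}^1}+\|u\|_{\dot{H}^1}\|v\|_{\dot{H}^{3/2}})$ coming from $\dot{H}^1\hookrightarrow L^6$ and $\dot{H}^{1/2}\hookrightarrow L^3$, the interpolation $\|u\|_{\dot{H}^{3/2}}\le\|u\|_{\dot{H}^1}^{1/2}\|u\|_{\dot{H}^2}^{1/2}$, and finally Hölder in time with exponents $4/3$ and $4$ against the integrable kernel $(t-\tau)^{-2/3}$; the $L^2_t\dot{H}^2$ component is handled analogously. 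This produces exactly the factor $T^{1/4}$. The abstract fixed-point lemma then yields a unique solution in a ball of $X_T$ as soon as $CT^{1/4}\|u^0\|_{\dot{H}^1}$ is small, i.e. for $T<C'\|u^0\|_{\dot{H}^1}^{-4}$; the power $4$ is forced by the scaling $\|u^\lambda(\cdot,0)\|_{\dot{H}^1}=\lambda^{1/2}\|u^0\|_{\dot{H}^1}$ together with $t\mapsto\lambda^{-2}t$. The \textbf{main obstacle} is precisely this bilinear estimate: one must distribute the available derivatives between the two factors so that the time-singular kernel remains integrable after the Hölder step, and this is exactly where the subcriticality of $\dot{H}^1$ (a half derivative above the critical $\dot{H}^{1/2}$) is consumed. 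Continuity into $\dot{H}^1$ and uniqueness then follow in the standard way from the contraction and from $\partial_t u\in L^2_tL^2$ read off the equation.

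For the smoothing statement I would bootstrap away from $t=0$. Given $u\in X_T$, I would reinsert it into the Duhamel formula and apply the smoothing estimates iteratively on $[\epsilon,T]$: starting from $u(\epsilon)\in\dot{H}^1$, each pass through $e^{(t-\tau)\Delta}$ gains regularity on the nonlinearity, upgrading $u$ to $C([\epsilon,T];\dot{H}^\alpha)$ for every $\alpha>1$, hence to spatial $C^\infty$ by Sobolev embedding; an analogous argument for the time derivatives, using the equation, gives joint smoothness, so $u\in C^\infty\big((0,T]\times\mathbb{R}^3\big)$.

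Finally, the forced case is handled by the same fixed point with the extra Duhamel term $L(f)(t)=\int_0^t e^{(t-\tau)\Delta}\mathbb{P}f(\tau)\,d\tau$. The only new ingredient is the parabolic maximal $L^2$-regularity bound $\|L(f)\|_{X_T}\le C\|f\|_{L^2([0,T];L^2)}$, which places an $L^2_tL^2$ force into $C_t\dot{H}^1\cap L^2_t\dot{H}^2$ with a constant independent of $T$; combined with the linear and bilinear estimates this yields a (possibly smaller) existence time depending on both $\|u^0\|_{\dot{H}^1}$ and $\|f\|_{L^2_tL^2}$. As the statement notes, no further bootstrapping is possible, since a merely $L^2$ force caps the regularity gain.
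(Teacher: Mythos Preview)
The paper does not actually prove this theorem: it is stated as a classical result of Fujita and Kato and cited to \cite{Kato}, with only the remark that ``the result is proven using an iteration scheme.'' Your proposal is a correct and fairly detailed sketch of precisely that iteration scheme---the contraction mapping argument in $C_t\dot{H}^1\cap L^2_t\dot{H}^2$ with the bilinear estimate carrying the factor $T^{1/4}$---so you have supplied what the paper only cites.
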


This is the classical result \cite{Kato} of Fujita and Kato. It has since been generalized to weaker spaces \cite{Chemin,MildSobolev}.
The result is proven using an iteration scheme. Using this result, we can easily use the properties of the heat semigroup to establish the local existence in time of $L^2$ mild solutions to the Navier-Stokes strain equation.

\begin{theorem}[Mild strain solutions exist for short times] \label{MildStrain}
Suppose $f=0.$ Then there exists a constant $C>0$ such that for all $S^0 \in L^2_{st}$, if $T<\frac{C}{4\|S^0\|_{L^2}^4},$ then there exists a unique mild solution to the Navier-Stokes Strain equation \eqref{NavierStrain} $S \in C \left([0,T];L^2_{st} \right ).$
Furthermore for all $0<\epsilon<T$, 
$S \in C\left([\epsilon,T];\dot{H}^\alpha \right )$ for all $\alpha> 0,$
and therefore $S \in C^\infty\left ((0,T]\times \mathbb{R}^3\right ).$

In the case where $f\neq 0$ for all $S^0 \in L^2_{st}$ and all $f\in L^2_{loc}\left (
(0,T^*);L^2(\mathbb{R}^3) \right )$ there exists $0<T_{max}\leq T^*$ and $S\in C\left ([0,T_{max});L^2_{st} \right )
\cap L^2 \left([0,T_{max});\dot{H}^1
(\mathbb{R}^3)\right )$ such that $S$ is a mild solution to the Navier-Stokes strain equation.
\end{theorem}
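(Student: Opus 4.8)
The plan is to obtain mild strain solutions not by setting up a fresh contraction mapping, but by transporting the Fujita--Kato velocity solutions of Theorem \ref{KATO} through the linear correspondence $u \leftrightarrow S$ developed in Sections 2 and 3. Given $S^0 \in L^2_{st}$, I would first reconstruct the initial velocity $u^0 = -2\divr(-\Delta)^{-1}S^0$; by the definition of $L^2_{st}$ this $u^0$ lies in $\dot H^1$ and is divergence free, and Proposition \ref{TensorIsometry} at $\alpha=0$ gives $\|u^0\|_{\dot H^1}^2 = \|\nabla\otimes u^0\|_{L^2}^2 = 2\|S^0\|_{L^2}^2$, so that $\|u^0\|_{\dot H^1}^4 = 4\|S^0\|_{L^2}^4$. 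This is exactly where the factor of $4$ in the existence time $T < C/(4\|S^0\|_{L^2}^4)$ originates: it is the Kato existence time for $u^0$ rewritten in terms of $S^0$. Applying Theorem \ref{KATO} produces a unique mild velocity solution $u \in C([0,T];\dot H^1)$, smooth for positive times, and I would then set $S := \nabla_{sym} u$.

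The central step is to verify that this $S$ is a mild strain solution in the sense of the preceding definition. Because $\nabla_{sym}$ is a constant-coefficient first-order Fourier multiplier, it commutes with the heat semigroup $e^{t\Delta}$ and with the Bochner integral in time, so applying it term by term to the Duhamel formula for $u$ yields
\begin{equation}
S(\cdot,t) = e^{t\Delta}S^0 + \int_0^t e^{(t-\tau)\Delta}\,\nabla_{sym}\bigl((-u\cdot\nabla)u - \nabla p + f\bigr)(\cdot,\tau)\,d\tau.
\end{equation}
It then remains to identify the integrand using the algebra already carried out in the strain reformulation of the dynamics: there we computed $\nabla_{sym}((u\cdot\nabla)u) = (u\cdot\nabla)S + S^2 + A^2$ with $A^2 = \tfrac14\omega\otimes\omega - \tfrac14|\omega|^2 I_3$, while $\nabla_{sym}(\nabla p) = \Hess(p)$. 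Substituting these reproduces the integrand of the mild strain Duhamel formula exactly. I would also check that the two pressure prescriptions agree, which reduces to the pointwise identity $\sum_{i,j}\partial_j u_i\,\partial_i u_j = \tr\bigl((\nabla\otimes u)^2\bigr) = |S|^2 - |A|^2 = |S|^2 - \tfrac12|\omega|^2$, the last equality being the pointwise form of Proposition \ref{TensorIsometry}.

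For uniqueness I would argue that the correspondence is a genuine bijection at the level of mild solutions. The reverse map $S \mapsto u = -2\divr(-\Delta)^{-1}S$ sends any mild strain solution back to a mild velocity solution: applying $-2\divr(-\Delta)^{-1}$ to the strain Duhamel formula and using the identity $-2\divr(-\Delta)^{-1}\nabla_{sym} = \mathbb{P}$, the Leray projection, recovers the velocity Duhamel formula with its pressure term. Hence two distinct mild strain solutions with the same datum would produce two distinct mild velocity solutions, contradicting the uniqueness in Theorem \ref{KATO}. The regularity and continuity claims transfer directly: since $S = \nabla_{sym}u$ and $\nabla_{sym}$ is a first-order constant-coefficient operator bounded from $\dot H^{\alpha+1}$ to $\dot H^{\alpha}$ (and an isometry up to the factor $\sqrt2$ by Proposition \ref{TensorIsometry} in the range $-\tfrac32<\alpha<\tfrac32$), the smoothing $u \in C([\epsilon,T];\dot H^{\alpha+1})$ for every $\alpha+1>1$ gives $S \in C([\epsilon,T];\dot H^{\alpha})$ for every $\alpha>0$, and membership of $S(\cdot,t)$ in $L^2_{st}$ for every $t$ is immediate from $S=\nabla_{sym}u$ with $u$ divergence free. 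The forced case $f\neq 0$ is handled identically, invoking the forced half of Theorem \ref{KATO} and translating the bound $u \in C([0,T_{max});\dot H^1)\cap L^2([0,T_{max});\dot H^2)$ into $S \in C([0,T_{max});L^2_{st})\cap L^2([0,T_{max});\dot H^1)$.

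The step I expect to require the most care is not any single algebraic identity but the functional-analytic bookkeeping that makes the term-by-term application of $\nabla_{sym}$ to the Duhamel integral legitimate: one must confirm that each nonlinear term lands in a space in which $e^{(t-\tau)\Delta}$ supplies enough smoothing for the time integral to converge and to define a continuous $L^2_{st}$-valued (respectively $\dot H^{\alpha}$-valued) curve. This is where the regularity $u \in C([0,T];\dot H^1)\cap L^2([0,T];\dot H^2)$ and the order $-1$ smoothing of the reconstruction operator $-2\divr(-\Delta)^{-1}$ are actually used, and it is the only place where the argument is more than formal.
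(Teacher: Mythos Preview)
Your proposal is correct and follows essentially the same route as the paper: reconstruct $u^0=-2\divr(-\Delta)^{-1}S^0$, invoke the Fujita--Kato theorem for the velocity, and then apply $\nabla_{sym}$ term by term to the Duhamel formula to obtain the mild strain formulation. If anything, you are more careful than the paper on several points it leaves implicit---the norm identity explaining the factor $4$, the matching of the two pressure prescriptions, the uniqueness argument via the inverse map, and the integrability needed to justify differentiating under the Duhamel integral---so your write-up would serve as a more detailed version of the same argument.
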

\begin{proof}
We begin by inverting the strain tensor $S^0$ to recover the initial velocity:
\begin{equation}
u^0=-2 \divr (- \Delta)^{-1}S^0.
\end{equation}
We can see from the pseudo-differential operator used to obtain $u^0,$ that $S^0 \in L^2$ implies $u^0 \in \dot{H^1}.$ This means that we can apply the theorem above to show that there exists $T>0,$ such that there is a $\dot{H^1}$ mild solution
$u\in C\left ([0,T];\dot{H^1}(\mathbb{R}^3) \right )
\cap L^2 \left([0,T];\dot{H}^2
(\mathbb{R}^3)\right )$
with $u(\cdot, 0)=u^0$, that is, for all $0<t \leq T,$  
\begin{equation}
u(\cdot,t)=e^{t\Delta}u^0
+\int_0^t e^{(t-\tau)\Delta}\left(-(u \cdot \nabla)u-\nabla p+f \right)(\cdot,\tau) d\tau.
\end{equation}
Next we will compute $S=\nabla_{sym}u.$
Clearly $S \in C \left([0,T];L^2_{st} \right ) \cap L^2\left ([0,T]:\dot{H}^1(\mathbb{R}^3) \right ).$
When taking the derivative of a convolution, the derivative can be applied to either of the functions being convolved; in this case, we will apply the differential operator $\nabla_{sym}$ to $u^0$ and $(u \cdot \nabla)u+\nabla p,$ and find that
\begin{multline}
S(\cdot,t)=e^{t \Delta}S^0
+\int_0^t e^{(t-\tau)\Delta} \\
\left(-(u \cdot \nabla)S-S^2
-\frac{1}{4}\omega\otimes\omega
+\frac{1}{4}|\omega|^2 I_3 -\Hess(p)
+\nabla_{sym}f
\right)(\cdot,\tau) d\tau .
\end{multline}
Finally the higher order regularity of a mild solution $u$ when $f=0$ proved by Kato and Fujita in \cite{Kato} immediately implies higher order regularity for
$S=\nabla_{sym} u.$
This completes the proof.
\end{proof}
Now that existence of mild solutions in a suitable space is ensured, we can use the Navier-Stokes Strain equation to simplify the identity for enstrophy growth.

\begin{theorem}[Enstrophy growth identity] \label{Strn}
Suppose $S \in C \left([0,T];L^2_{st} \right ) \cap L^2\left ([0,T]:\dot{H}^1(\mathbb{R}^3) \right )$ is a mild solution to the Navier-Stokes strain equation. Then almost everywhere $0<t\leq T,$
\begin{equation} \label{StrainGrowth}
\partial_t \|S(\cdot,t)\|_{L^2}^2=-2\|S\|_{\dot{H^1}}^2- \frac{4}{3} \int_{\mathbb{R}^3} \tr(S^3)
+\left <-\Delta u,f \right >.
\end{equation}
\end{theorem}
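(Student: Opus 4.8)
The plan is to pair the Navier--Stokes strain equation \eqref{NavierStrain} (in its forced form) against $S$ in $L^2$ and read off the growth of $\|S\|_{L^2}^2$ term by term. Since a mild solution satisfies $S\in C([0,T];L^2_{st})\cap L^2([0,T];\dot H^1)$, the quantity $\|S(\cdot,t)\|_{L^2}^2$ is absolutely continuous with $\tfrac12\partial_t\|S\|_{L^2}^2=\langle \partial_t S,S\rangle$ for almost every $t$: in the force-free case this is immediate from the parabolic smoothing in Theorem \ref{MildStrain} (which makes $S$ smooth and rapidly decaying for $t>0$), and in the forced case it follows from the usual mollification argument justifying the energy identity at the regularity $L^2_t\dot H^1$. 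Substituting $\partial_t S$ from the equation, I would dispatch the easy terms first: the transport term vanishes because $\langle (u\cdot\nabla)S,S\rangle=\int u\cdot\nabla(\tfrac12|S|^2)=-\tfrac12\int(\nabla\cdot u)|S|^2=0$; the dissipation gives $\langle \Delta S,S\rangle=-\|S\|_{\dot H^1}^2$; and both $\Hess(p)$ and the scalar term $\tfrac14|\omega|^2I_3$ are orthogonal to $S\in L^2_{st}$ by Proposition \ref{isometry} (using $p\in\dot H^2$ and $|\omega|^2\in L^2$, which hold since $S\in L^2\cap\dot H^1\subset L^2\cap L^6$). What remains is $-\langle S^2,S\rangle=-\int\tr(S^3)$, the vortex term $-\tfrac14\langle\omega\otimes\omega,S\rangle$, and the forcing term $\langle\nabla_{sym}f,S\rangle$.

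The crux is to convert the vortex term into a multiple of $\int\tr(S^3)$. Writing $\tfrac14\omega\otimes\omega=A^2+\tfrac14|\omega|^2I_3$, as in the derivation of \eqref{NavierStrain}, and again discarding the scalar piece by Proposition \ref{isometry}, I obtain $\tfrac14\langle\omega\otimes\omega,S\rangle=\langle A^2,S\rangle=\int\tr(SA^2)$. I would then prove the algebraic identity $\int\tr(SA^2)=-\tfrac13\int\tr(S^3)$. The key observation is that $\int\tr\big((\nabla\otimes u)^3\big)=0$: integrating $\int\partial_iu_j\,\partial_ju_k\,\partial_ku_i$ by parts in $x_i$ moves a derivative onto the product, one resulting term contains $\partial_k(\nabla\cdot u)=0$ while the other collapses to $-\tfrac12\int(u\cdot\nabla)\tr\big((\nabla\otimes u)^2\big)=\tfrac12\int(\nabla\cdot u)\tr\big((\nabla\otimes u)^2\big)=0$. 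Expanding $(\nabla\otimes u)^3=(S+A)^3$ and using that the trace of a symmetric matrix times an antisymmetric one vanishes, together with $\tr(A^3)=0$, kills every mixed trace and leaves exactly $\tr(S^3)+3\tr(SA^2)$ pointwise; hence $0=\int\tr(S^3)+3\int\tr(SA^2)$, which gives the claimed identity and therefore $\langle S,\omega\otimes\omega\rangle=-\tfrac43\int\tr(S^3)$. Substituting, $-\int\tr(S^3)-\tfrac14\langle\omega\otimes\omega,S\rangle=-\int\tr(S^3)+\tfrac13\int\tr(S^3)=-\tfrac23\int\tr(S^3)$.

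For the forcing term I would use that $S$ is symmetric to replace $\nabla_{sym}f$ by $\nabla f$ in the pairing, integrate by parts to get $\langle\nabla_{sym}f,S\rangle=-\langle f,\divr(S)\rangle$, and then invoke $-\Delta u=-2\divr(S)$ from \eqref{ConsistencyStep} to obtain $\langle\nabla_{sym}f,S\rangle=\tfrac12\langle -\Delta u,f\rangle$. Collecting the surviving contributions and multiplying by $2$ yields exactly \eqref{StrainGrowth}. The main obstacle is the cubic identity $\int\tr(SA^2)=-\tfrac13\int\tr(S^3)$---more precisely, establishing $\int\tr\big((\nabla\otimes u)^3\big)=0$ from incompressibility and correctly bookkeeping the expansion of $(S+A)^3$; the remaining difficulties are purely technical, namely justifying the integrations by parts and the energy identity at mild-solution regularity, which I would handle by approximation together with the interpolation $\dot H^1\cap L^2\hookrightarrow L^3$ guaranteeing $\tr(S^3)\in L^1$ for almost every $t$.
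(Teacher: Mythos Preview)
Your argument is correct and complete, but it follows a genuinely different route from the paper's. The paper computes $\partial_t\|S\|_{L^2}^2$ in two independent ways: once from the vorticity equation (using the isometry of Proposition~\ref{TensorIsometry} to convert $\tfrac12\partial_t\|\omega\|_{L^2}^2$ into $\partial_t\|S\|_{L^2}^2$), obtaining $-2\|S\|_{\dot H^1}^2+\langle S;\omega\otimes\omega\rangle+\langle-\Delta u,f\rangle$, and once from the strain equation, obtaining $-2\|S\|_{\dot H^1}^2-\tfrac12\langle S;\omega\otimes\omega\rangle-2\int\tr(S^3)+\langle-\Delta u,f\rangle$. It then takes the convex combination $\tfrac13\cdot(\text{first})+\tfrac23\cdot(\text{second})$ to eliminate the vortex-stretching term. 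You, by contrast, work exclusively with the strain equation and supply the missing ingredient directly: the identity $\int\tr\big((\nabla\otimes u)^3\big)=0$, proved by a single integration by parts and incompressibility, together with the pointwise expansion $\tr\big((S+A)^3\big)=\tr(S^3)+3\tr(SA^2)$, immediately yields $\langle S;\omega\otimes\omega\rangle=-\tfrac43\int\tr(S^3)$.

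Your route is more self-contained: it never touches the vorticity equation or the Hilbert-space isometry of Proposition~\ref{TensorIsometry}, and it isolates the algebraic reason the vortex-stretching term collapses into $\int\tr(S^3)$. The paper's route, on the other hand, makes the link to the classical enstrophy identity explicit and recovers \eqref{VortStretch} as a by-product of comparing two legitimate expressions for the same derivative; it is closer in spirit to Chae's argument mentioned in the remark following Corollary~\ref{DET2}. Both proofs require exactly the same regularity justifications (which you handle correctly via $S\in L^2\cap\dot H^1\hookrightarrow L^3$ and Proposition~\ref{isometry}), so neither is technically lighter than the other.
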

\begin{proof}
Using \eqref{NavierVorticity}, we can compute the rate of change of enstrophy
\begin{equation}
\partial_t \frac{1}{2}\|\omega(\cdot,t)\|_{L^2}^2 = -\left <-\Delta \omega, \omega \right >-\left <(u \cdot \nabla) \omega, \omega
\right >+\left < S\omega, \omega \right >+\left <\nabla \times f,\omega \right >.
\end{equation}
Next we can integrate by parts to show that $\left <\nabla \times f,\omega \right >=\left <f,-\Delta u \right >$ and $\left < \omega, (u \cdot \nabla)\omega \right>=0,$ using the divergence free condition in the latter case. Therefore we find that
\begin{equation} \label{Enstrophy}
\partial_t \frac{1}{2}\|\omega(\cdot,t)\|_{L^2}^2 = -\|\omega\|_{\dot{H^1}}^2+ \left < S; \omega \otimes \omega \right >+\left <-\Delta u,f\right >.
\end{equation}

This is the standard identity for enstrophy growth, based on the interaction of the Strain matrix and the vorticity. See chapter 7 in \cite{NS21} for more details. We can use the isometry in Proposition \ref{isometry} to restate \eqref{Enstrophy} in terms of strain:
\begin{equation} \label{Tensor1}
\partial_t \|S(\cdot,t)\|_{L^2}^2= -2\|S\|_{\dot{H^1}}^2 +\left < S; \omega \otimes \omega \right >
+\left<-\Delta u,f \right >.
\end{equation}
However we can also calculate the $L^2$ growth of the strain tensor directly from our evolution equation for the strain tensor \eqref{NavierStrain},
\begin{multline}
\partial_t \|S(\cdot,t)\|_{L^2}^2=-2 \left < -\Delta S, S \right > - 2 \left < (u \cdot \nabla )S, S \right > - 2 \left < S^2, S \right > \\ -\frac{1}{2} \left < \omega \otimes \omega; S \right >- 2 \left <\Hess(p), S \right >+
\frac{1}{2}\left<|\omega|^2 I_3, S \right>
+2\left <\nabla_{sym}f,S\right >.
\end{multline}
Integrating by parts we know that $\left < (u \cdot \nabla )S, S \right > =0$.
Note that $S \in C\left ( [0,T],L^2 \right ) \cap
L^2\left ((0,T], \dot{H}^1 \right ).$ In particular this implies that $S(\cdot,t), \omega (\cdot, t) \in L^2 \cap L^6$ almost everywhere $0<t \leq T.$  This means that $S(\cdot,t), \omega (\cdot, t) \in L^3,$ so $\left < S; \omega \otimes \omega \right >$ and $\int \tr (S^3)$ are both well defined. This also means that 
$|\omega(\cdot, t)|^2, \Hess(p)(\cdot, t) \in L^2$ for almost all $0<t\leq T.$ Therefore we can apply Proposition \ref{isometry} and find that $|\omega|^2 I_3, \Hess(p) \in \left (L^2_{st} \right )^ \perp$, so
\begin{align}
\left <\frac{1}{2}|\omega|^2I_3, S \right >&=0,\\
\left < \Hess(p), S \right >&= 0.
\end{align}

Now we can use the fact that $S$ is symmetric to compute that
\begin{equation}
    \left<S^2,S\right>=
    \int_{\mathbb{R}^3}\tr(S^3),
\end{equation}
and that 
\begin{align}2\left <\nabla_{sym}f,S \right >&=
2\left<\nabla \otimes f,S\right >\\
&=\left <f, -2\divr(S)\right >\\
&=\left <f, -\Delta u \right >.
\end{align}
Putting all of these together we find that
\begin{equation} \label{Tensor2}
\partial_t \|S(\cdot,t)\|_{L^2}^2=-2\|S\|_{\dot{H^1}}^2 -\frac{1}{2}\left< S; \omega \otimes \omega \right> - 2\int_{\mathbb{R}^3}\tr(S^3)
+\left <-\Delta u,f \right>.
\end{equation}
We now will add $\frac{1}{3}$ \eqref{Tensor1} to  $\frac{2}{3}$ \eqref{Tensor2} to cancel the term $\left< S, \omega \otimes \omega \right >$, and we find
\begin{equation}
\partial_t \|S(\cdot,t)\|_{L^2}^2=-2\|S\|_{\dot{H^1}}^2- \frac{4}{3}\int_{\mathbb{R}^3}\tr(S^3)
+\left <-\Delta u,f \right>.
\end{equation}

Finally we will note that because the subcritical quantity $\|S(\cdot,t)\|_{L^2}$ is controlled uniformly on $[0,T],$ the smoothing due to the heat kernel guarantees that $S$ is smooth when $f=0$, so the identity \eqref{StrainGrowth} can be understood as a derivative of a smooth quantity in the classical sense in this case. When $f\neq 0,$
we cannot assume that $S\in C^1\left([0,T];L^2\left(\mathbb{R}^3 \right)\right),$ which is why we can only say the identity holds almost everywhere in the general case with a nonzero external force.
\end{proof}

Now that we have improved the estimate for enstrophy growth from one that involved the interaction of the vorticity and the strain tensor to an estimate that only involves the strain tensor. We can still extract more geometric information about the flow, however. The identity for enstrophy growth in Theorem \ref{Strn} can also be expressed in terms of $\det(S).$
\begin{corollary}[Alternative enstrophy growth identity] \label{DET2}
Suppose
$S \in C \left([0,T];L^2_{st} \right ) \cap L^2\left ([0,T]:\dot{H}^1(\mathbb{R}^3) \right )$ is a mild solution to the Navier-Stokes strain equation. Then for almost all $0 < t\leq T,$
\begin{equation} \label{VortDet}
\partial_t \|S(\cdot,t)\|_{L^2}^2=-2\|S\|_{\dot{H^1}}^2- 4 \int_{\mathbb{R}^3} \det(S)+\left <-\Delta u,f \right>.
\end{equation}
\end{corollary}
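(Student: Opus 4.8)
The plan is to deduce this corollary directly from Theorem \ref{Strn} by converting the cubic trace term into a determinant using the trace-free constraint on $S$. Since Theorem \ref{Strn} already gives
\begin{equation*}
\partial_t \|S(\cdot,t)\|_{L^2}^2=-2\|S\|_{\dot{H^1}}^2- \frac{4}{3} \int_{\mathbb{R}^3} \tr(S^3)+\left <-\Delta u,f \right >,
\end{equation*}
it suffices to establish the pointwise algebraic identity $\tr(S^3)=3\det(S)$ for every trace-free symmetric $3\times 3$ matrix and then substitute.

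To prove this pointwise identity, I would first recall that $S(\cdot,t)$ is real symmetric and satisfies $\tr(S)=\nabla\cdot u=0$, so at each point $S$ is orthogonally diagonalizable with real eigenvalues $\lambda_1,\lambda_2,\lambda_3$ obeying $\lambda_1+\lambda_2+\lambda_3=0$. In the eigenbasis $\tr(S^3)=\lambda_1^3+\lambda_2^3+\lambda_3^3$ and $\det(S)=\lambda_1\lambda_2\lambda_3$, both being basis-independent. The key step is the classical elementary identity that whenever $a+b+c=0$ one has $a^3+b^3+c^3=3abc$, which follows from the factorization $a^3+b^3+c^3-3abc=(a+b+c)(a^2+b^2+c^2-ab-bc-ca)$. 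Applying this with $a=\lambda_1,\ b=\lambda_2,\ c=\lambda_3$ yields $\tr(S^3)=3\det(S)$ at every point, and hence $\frac{4}{3}\tr(S^3)=4\det(S)$.

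Substituting this into the identity from Theorem \ref{Strn} produces \eqref{VortDet}. Integrability is not an additional obstacle: the proof of Theorem \ref{Strn} already shows $S(\cdot,t)\in L^2\cap L^6\subset L^3$ for almost every $t$, and since $|\det(S)|\le C|S|^3$ pointwise, $\int_{\mathbb{R}^3}\det(S)$ is finite exactly when $\int_{\mathbb{R}^3}\tr(S^3)$ is. There is no serious difficulty here; the entire content of the corollary is the trace-free algebraic identity $\tr(S^3)=3\det(S)$, and everything else is a direct substitution into an identity that has already been proven.
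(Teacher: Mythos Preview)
Your proposal is correct and follows essentially the same approach as the paper: both reduce the corollary to the pointwise identity $\tr(S^3)=3\det(S)$ for trace-free symmetric $3\times 3$ matrices and then substitute into Theorem \ref{Strn}. The only cosmetic difference is that you invoke the factorization $a^3+b^3+c^3-3abc=(a+b+c)(a^2+b^2+c^2-ab-bc-ca)$, whereas the paper verifies the same identity by substituting $\lambda_3=-\lambda_1-\lambda_2$ and expanding directly.
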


\begin{proof}
Because $S$ is symmetric it will be diagonalizable with three real eigenvalues, and because $S$ is trace free, we have $\tr(S)=\lambda_1+\lambda_2+\lambda_3=0$.  This allows us to relate $\tr(S^3)$ to $\det(S)$ by
\begin{align}
\tr(S^3)&= \lambda_1^3+\lambda_2^3+\lambda_3^3\\
&=\lambda_1^3+\lambda_2^3 + (-\lambda_1- \lambda_2)^3\\
&= -3\lambda_1^2 \lambda_2- 3 \lambda_1 \lambda_2^2\\
&= 3(-\lambda_1 -\lambda_2 )\lambda_1 \lambda_2\\ 
&= 3 \lambda_1 \lambda_2 \lambda_3\\
&=3  \det(S).
\end{align}
Therefore we can write our identity for enstrophy growth as:
\begin{equation}
\partial_t \|S(\cdot,t)\|_{L^2}^2=-2\|S\|_{\dot{H^1}}^2- 4 \int_{\mathbb{R}^3} \det(S)+\left <-\Delta u,f \right>.
\end{equation}
This completes the proof.
\end{proof}

\begin{remark}
    As mentioned in the introduction, Dongho Chae proved the analogous result, 
    \begin{equation}\partial_t \|S(\cdot,t)\|_{L^2}^2=- 4 \int_{\mathbb{R}^3} \det(S), \end{equation}
    in the context of smooth solutions to the Euler equation with no external force \cite{ChaeStrain}. In this paper he shows directly that
    \begin{equation} \label{ChaeIdentity}
    \partial_t \frac{1}{2}\|\nabla \otimes u(\cdot,t)\|_{L^2}^2=
    \left <\left (u\cdot \nabla \right )u,
    \Delta u\right >=-\int_{\mathbb{R}^3} \tr(S^3)+
    \frac{1}{4} \left <S; \omega \otimes
    \omega \right >.
    \end{equation}
    In the context of the Euler equation, the familiar estimate for enstrophy growth following from the vorticity equation is
    \begin{equation} \label{ChaeVort}
     \partial_t \frac{1}{2}\|\nabla \otimes u(\cdot,t)\|_{L^2}^2= \partial_t \frac{1}{2}\|\omega(\cdot,t)\|_{L^2}^2=
     \left <S; \omega \otimes \omega \right >.
    \end{equation}
    Adding $\frac{4}{3}$ \eqref{ChaeIdentity} and $-\frac{1}{3}$ \eqref{ChaeVort}, it follows that
    \begin{equation}
     \partial_t \|S(\cdot,t)\|_{L^2}^2=
     \partial_t \frac{1}{2}\|\nabla \otimes u(\cdot,t)\|_{L^2}^2=
     -\frac{4}{3}\int_{\mathbb{R}^3} \tr(S^3)=
     -4\int_{\mathbb{R}^3} \det(S).
    \end{equation}
\end{remark}

The identity for enstrophy growth in Corollary \ref{DET2} gives us a significantly better understanding of enstrophy production than the classical enstrophy growth identity \eqref{Enstrophy}, because we now have the growth controlled solely in terms of the strain tensor, rather than both the strain tensor and the vorticity.
This estimate also provides analytical confirmation of the well known result that the vorticity tends to align with the eigenvector corresponding to the intermediate eigenvalue of the strain matrix \cite{VorticityAlignmentResults,ConcentratedVorticity}. Comparing the identities in \eqref{StrainGrowth}, \eqref{Enstrophy}, and \eqref{VortDet} we see that
\begin{equation} \label{VortStretch}
\left < S, \omega \otimes \omega \right > = -4 \int_{\mathbb{R}^3} \det(S)= 
-\frac{4}{3}\int \tr(S^3).
\end{equation}
When $\det(S)$ tends to be positive, it means there are two negative eigenvalues and one positive eigenvalue, so $\left < S, \omega \otimes \omega \right >$ being negative means the vorticity tends to align, on average when integrating over the whole space, with the negative eigenspaces.  Likewise, when $\det(S)$ tends to be negative, it means there are two positive eigenvalues and one negative eigenvalue, so $\left < S, \omega \otimes \omega \right >$ being positive means the vorticity tends to align, on average when integrating over the whole space, with the positive eigenspaces.  When $\det(S)$ tends to be zero when integrated over the whole space, the vorticity tends clearly to be aligned with the intermediate eigenvalue, as well. Growth in all cases geometrically corresponds to the strain matrix $S$ stretching in two directions, while strongly contracting in the third direction.

Finally we will bound the growth rate of enstrophy in terms of the size of the strain matrix, and see what this matrix looks like in the sharp case of this bound.
\begin{proposition}[Determinant bound] \label{DET} 
Let $M$ be a three by three, symmetric, trace free matrix, then
\begin{equation} \label{DeterminantBound}
-4 \det(M) \leq \frac{2}{9}\sqrt{6}|M|^3,
\end{equation}
with equality if and only if $- \frac{1}{2}\lambda_1=\lambda_2=\lambda_3,$ where $\lambda_1 \leq \lambda_2 \leq \lambda_3$ are the eigenvalues of $M$.
\end{proposition}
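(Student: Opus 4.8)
The plan is to reduce the matrix inequality to a purely algebraic inequality among the eigenvalues and then exploit the trace-free constraint through a trigonometric parametrization. First I would record that since $M$ is symmetric it is orthogonally diagonalizable, and since the Frobenius norm is invariant under orthogonal conjugation we have $|M|^2=\lambda_1^2+\lambda_2^2+\lambda_3^2$, while $\det(M)=\lambda_1\lambda_2\lambda_3$ and the trace-free hypothesis reads $\lambda_1+\lambda_2+\lambda_3=0$. The desired bound
\begin{equation}
-4\lambda_1\lambda_2\lambda_3 \leq \frac{2}{9}\sqrt{6}\left(\lambda_1^2+\lambda_2^2+\lambda_3^2\right)^{3/2}
\end{equation}
is homogeneous of degree three in $(\lambda_1,\lambda_2,\lambda_3)$, so after rescaling it suffices to minimize $\lambda_1\lambda_2\lambda_3$ over the circle cut out by $\sum_i\lambda_i=0$ and $\sum_i\lambda_i^2$ fixed inside $\mathbb{R}^3$.

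The key step is to parametrize this circle explicitly. Writing $r=|M|$, every trace-free triple can be written, up to permutation, as
\begin{equation}
\lambda_k=\sqrt{\tfrac{2}{3}}\,r\,\cos\!\left(\theta+\tfrac{2\pi k}{3}\right),\qquad k=0,1,2,
\end{equation}
for some angle $\theta$, since the three equally spaced cosines sum to zero and have squared sum $\tfrac{3}{2}$, which makes both constraints automatic and shows the parametrization is onto the constraint set. I would then invoke the product identity $\cos\theta\,\cos(\theta+\tfrac{2\pi}{3})\,\cos(\theta+\tfrac{4\pi}{3})=\tfrac{1}{4}\cos(3\theta)$, which follows from two applications of the product-to-sum formula, to obtain
\begin{equation}
\det(M)=\left(\tfrac{2}{3}\right)^{3/2}r^3\cdot\tfrac{1}{4}\cos(3\theta)=\frac{\sqrt{6}}{18}\,r^3\cos(3\theta).
\end{equation}
Because $\cos(3\theta)\geq-1$, this immediately yields $-4\det(M)=-\tfrac{2}{9}\sqrt{6}\,r^3\cos(3\theta)\leq\tfrac{2}{9}\sqrt{6}\,|M|^3$, which is exactly the claimed inequality.

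For the equality case I would trace back when $\cos(3\theta)=-1$, i.e. $\theta\in\tfrac{\pi}{3}+\tfrac{2\pi}{3}\mathbb{Z}$. Substituting $\theta=\tfrac{\pi}{3}$ into the parametrization gives eigenvalues proportional to $\left(-1,\tfrac{1}{2},\tfrac{1}{2}\right)$, so two eigenvalues coincide and the third equals $-2$ times each of them; reordering as $\lambda_1\leq\lambda_2\leq\lambda_3$ this is precisely the stated condition $-\tfrac{1}{2}\lambda_1=\lambda_2=\lambda_3$ (with $\lambda_1\leq 0$, which is forced since $-4\det(M)$ must be nonnegative at the maximum). The genuine content of the argument is therefore the surjectivity of the cosine parametrization together with the product identity; once these are in hand the inequality collapses to the trivial bound $\cos(3\theta)\geq-1$. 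The part requiring the most care is the equality analysis, since the optimum is attained at several values of $\theta$ related by permutation, and one must check that all of them give the same eigenvalue configuration up to ordering. A Lagrange multiplier computation on the two constraints would furnish an alternative route, but extracting the equality characterization cleanly is noticeably more cumbersome that way, which is why I prefer the explicit parametrization.
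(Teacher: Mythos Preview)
Your proof is correct and takes a genuinely different route from the paper. The paper fixes $\lambda_3>0$, introduces the single real parameter $r=-\lambda_1/\lambda_3\in[\tfrac{1}{2},2]$, writes
\[
-4\det(M)=\sqrt{2}\,\frac{r^2-r}{(r^2-r+1)^{3/2}}\,|M|^3,
\]
and then checks directly that this rational function of $r$ attains its maximum $\tfrac{2}{9}\sqrt{6}$ at the endpoint $r=2$, which corresponds to $\lambda_2=\lambda_3$. Your approach instead parametrizes the constraint circle by an angle and collapses the whole computation to the single bound $\cos(3\theta)\geq-1$ via the triple-cosine product identity. The trigonometric route is slicker and makes the equality case essentially automatic once the parametrization is in hand; the paper's rational-function route is more pedestrian but avoids any trigonometric identities and keeps everything in terms of eigenvalue ratios, which ties in directly with the parameter $r$ used later in the toy-model ODE analysis of Section~6.
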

\begin{proof}
In the case where $M=0$, it holds trivially.  In the case where $M\neq 0$, then we have $\lambda_1<0, \lambda_3>0$.  This allows us to define a parameter $r=-\frac{\lambda_1}{\lambda_3}$. The two parameters $\lambda_3$ and $r$ completely define the system because $\lambda_1=-r \lambda_3$ and $\lambda_2=-\lambda_1-\lambda_3= (r-1)\lambda_3$. We must now say something about the range of values the parameter $r$ can take on.  $\lambda_1 \leq \lambda_2 \leq \lambda_3$ implies that
$-r \leq r-1 \leq 1$, so therefore $\frac{1}{2} \leq r \leq 2.$  Now we can observe that 
\begin{equation}
-4\det(M)=-4\lambda_1 \lambda_2 \lambda_3=4r(r-1)\lambda_3^3,
\end{equation}
and that
\begin{equation}
|M|^2=\lambda_1^2+\lambda_2^2+\lambda_3^2=(r^2+(r-1)^2+1)\lambda_3^2=(2r^2-2r+2)\lambda_3^2.
\end{equation}
We can combine the two equations above to find that 
\begin{equation}
-4\det(M)= \sqrt{2}\frac{r^2-r}{(r^2-r+1)^\frac{3}{2}}|M|^3.
\end{equation}
Next we will observe that
\begin{equation}
\sqrt{2}\frac{r^2-r}{(r^2-r+1)^\frac{3}{2}}\bigg|_{r=2}=\sqrt{2}\frac{2}{3 \sqrt{3}}=\frac{2}{9}\sqrt{6}.
\end{equation}
This is exactly as we want, as $r=2$ is the case that we want to correspond to equality. Finally we observe that for all $\frac{1}{2}\leq r <2$, we have that
\begin{equation}
\sqrt{2}\frac{r^2-r}{(r^2-r+1)^\frac{3}{2}}<\frac{2}{9}\sqrt{6}.
\end{equation}
This completes the proof.
\end{proof}
The structure of the quadratic term in relation to $r=-\frac{\lambda_1}{\lambda_3}=2,$ the extremal case, will be investigated further in section 6 when we consider blow up for a toy model ODE for the Navier-Stokes strain equation. It is an interesting open question whether or not there is a strain matrix which saturates this inequality globally in space. More precisely, is there an $S \in L^2_{st},$ not identically zero, such that 
$\lambda_2(x)=\lambda_3(x)$ almost everywhere $x \in \mathbb{R}^3$?
\begin{corollary} Suppose $S \in C \left([0,T];L^2_{st} \right ) \cap L^2\left ([0,T]:\dot{H}^1(\mathbb{R}^3) \right )$ is a mild solution to the Navier-Stokes strain equation with $f=0.$ Then for all $0<t\leq T,$
\begin{equation}
\partial_t \|S(\cdot,t)\|_{L^2}^2 \leq -2\|S\|_{\dot{H^1}}^2+ \frac{2}{9}\sqrt{6}\int_{\mathbb{R}^3}|S|^3.
\end{equation}
\end{corollary}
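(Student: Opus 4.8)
The plan is to stitch together the two results immediately preceding this statement: the enstrophy growth identity of Corollary \ref{DET2} and the pointwise determinant bound of Proposition \ref{DET}. First I would specialize Corollary \ref{DET2} to the force-free case $f=0$, which gives, for almost every $0<t\leq T$,
\begin{equation}
\partial_t \|S(\cdot,t)\|_{L^2}^2 = -2\|S\|_{\dot{H}^1}^2 - 4\int_{\mathbb{R}^3}\det(S).
\end{equation}
This reduces the entire problem to bounding the determinant term from above, since the dissipation term $-2\|S\|_{\dot{H}^1}^2$ is already nonpositive and can be retained unchanged.

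Next I would observe that for almost every fixed $t$ and every $x\in\mathbb{R}^3$, the matrix $S(x,t)$ is symmetric by the very definition of the strain tensor and trace-free because membership in $L^2_{st}$ forces $\tr(S)=\nabla\cdot u=0$. Hence Proposition \ref{DET} applies pointwise with $M=S(x,t)$, yielding
\begin{equation}
-4\det(S(x,t)) \leq \tfrac{2}{9}\sqrt{6}\,|S(x,t)|^3
\end{equation}
for almost every $x$. Integrating this pointwise inequality over $\mathbb{R}^3$ gives $-4\int_{\mathbb{R}^3}\det(S)\leq \tfrac{2}{9}\sqrt{6}\int_{\mathbb{R}^3}|S|^3$, and substituting into the identity above produces the claimed estimate.

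The only point requiring genuine care, and the closest thing to an obstacle, is justifying that the integrals are finite so that the pointwise bound may be integrated term by term. Here I would reuse the integrability already established in the proof of Theorem \ref{Strn}: since $S\in C([0,T];L^2)\cap L^2([0,T];\dot{H}^1)$, the Sobolev embedding $\dot{H}^1(\mathbb{R}^3)\hookrightarrow L^6(\mathbb{R}^3)$ gives $S(\cdot,t)\in L^2\cap L^6$ and hence $S(\cdot,t)\in L^3$ for almost every $t$, so $\int_{\mathbb{R}^3}|S|^3<+\infty$. Moreover each eigenvalue of $S(x,t)$ is bounded in magnitude by the Frobenius norm $|S(x,t)|$, so $|\det(S)|=|\lambda_1\lambda_2\lambda_3|\leq |S|^3$ pointwise, which shows $\det(S)(\cdot,t)\in L^1$ and legitimizes integrating the pointwise inequality. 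I would also remark that, by the equality case of Proposition \ref{DET}, the bound is saturated only when $-\tfrac{1}{2}\lambda_1(x)=\lambda_2(x)=\lambda_3(x)$ holds almost everywhere, which foreshadows the extremal configuration $r=-\lambda_1/\lambda_3=2$ analyzed in the toy-model blow-up of section 6.
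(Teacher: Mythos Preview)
Your proposal is correct and follows precisely the paper's own approach: the paper's proof is the one-line remark that the corollary follows immediately from Proposition \ref{DET} and Corollary \ref{DET2}, which is exactly what you do, with the added (and welcome) care about $L^3$ integrability. The only small refinement is that the statement claims the inequality for \emph{all} $0<t\leq T$ rather than almost every $t$; since $f=0$, the smoothing noted at the end of the proof of Theorem \ref{Strn} makes $S$ smooth for $t>0$, which upgrades your ``almost every $t$'' to ``every $t$''.
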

\begin{proof}
This corollary follows immediately from Proposition \ref{DET} and Corollary \ref{DET2}.
\end{proof}

This corollary allows us to derive an a priori estimate on the growth of enstrophy, which will then give us a lower bound on $T_{max}$.
\begin{theorem}[Enstrophy Growth Differential Inequality] \label{MaxGrowth}
Suppose $S \in C \left([0,T];L^2_{st} \right ) \cap L^2\left ([0,T]:\dot{H}^1(\mathbb{R}^3) \right )$ is a mild solution to the Navier-Stokes strain equation with $f=0$. Then for all $0<t \leq T,$
\begin{equation} \label{enstrophy derivative}
\partial_t \|S(\cdot, t)\|_2^2 \leq \frac{1}{1458 \pi^4} \|S(\cdot, t)\|_2^6,
\end{equation}

\begin{equation} 
\|S(\cdot, t)\|_2^2 \leq \left ( \frac{1}{\frac{1}{\|S^0\|_{L^2}^4}- \frac{1}{729 \pi^4}  t}\right )^{\frac{1}{2}}.
\end{equation}
\end{theorem}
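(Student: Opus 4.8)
The plan is to derive the differential inequality by combining the previous corollary's bound on enstrophy growth with two sharp functional inequalities: the Sobolev embedding $\dot{H}^1(\mathbb{R}^3) \hookrightarrow L^6(\mathbb{R}^3)$ and the isometry of Proposition~\ref{TensorIsometry}. Starting from
\begin{equation}
\partial_t \|S(\cdot,t)\|_{L^2}^2 \leq -2\|S\|_{\dot{H}^1}^2 + \tfrac{2}{9}\sqrt{6}\int_{\mathbb{R}^3}|S|^3,
\end{equation}
the cubic term $\int |S|^3 = \|S\|_{L^3}^3$ must be controlled so that the negative dissipation term dominates up to a power of the $L^2$ norm. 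First I would interpolate: by Hölder or by the standard $L^3$-interpolation between $L^2$ and $L^6$, write $\|S\|_{L^3} \leq \|S\|_{L^2}^{1/2}\|S\|_{L^6}^{1/2}$, so that $\|S\|_{L^3}^3 \leq \|S\|_{L^2}^{3/2}\|S\|_{L^6}^{3/2}$. Then apply the sharp Sobolev inequality $\|S\|_{L^6} \leq C_S \|S\|_{\dot{H}^1}$ (applied componentwise, or entrywise to the matrix $S$, which is legitimate since $|S|$ is the Euclidean norm of the entries) to obtain $\|S\|_{L^3}^3 \leq C_S^{3/2}\|S\|_{L^2}^{3/2}\|S\|_{\dot{H}^1}^{3/2}$.

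Next I would absorb the $\|S\|_{\dot{H}^1}^{3/2}$ factor into the dissipation using Young's inequality with exponents $(4/3, 4)$, splitting the product $\|S\|_{L^2}^{3/2}\|S\|_{\dot{H}^1}^{3/2}$ so that the $\|S\|_{\dot{H}^1}$-factor appears to the power $2$ (matching the dissipation term) and can be cancelled against $-2\|S\|_{\dot{H}^1}^2$. Concretely, for any $\varepsilon>0$,
\begin{equation}
\tfrac{2}{9}\sqrt{6}\,C_S^{3/2}\|S\|_{L^2}^{3/2}\|S\|_{\dot{H}^1}^{3/2} \leq \varepsilon\|S\|_{\dot{H}^1}^2 + C(\varepsilon)\|S\|_{L^2}^6,
\end{equation}
and choosing $\varepsilon = 2$ makes the dissipation terms cancel exactly, leaving $\partial_t\|S\|_{L^2}^2 \leq C(\varepsilon)\|S\|_{L^2}^6$. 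The arithmetic of tracking the sharp Sobolev constant $C_S$ on $\mathbb{R}^3$ (Talenti's value) together with the factors $\tfrac{2}{9}\sqrt{6}$ and the Young's-inequality weights is what produces the explicit coefficient $\tfrac{1}{1458\pi^4}$; this bookkeeping is the step I expect to be the main obstacle, since the stated constant is very precise and requires the optimal Sobolev constant rather than a generic one, plus an optimal choice of the Young parameter $\varepsilon$ to minimize $C(\varepsilon)$.

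Finally, with the cubic differential inequality $y'(t) \leq \tfrac{1}{1458\pi^4}\,y(t)^3$ in hand where $y(t)=\|S(\cdot,t)\|_{L^2}^2$, I would integrate by separation of variables. Writing $\tfrac{d}{dt}\bigl(y^{-2}\bigr) = -2y^{-3}y' \geq -\tfrac{2}{1458\pi^4} = -\tfrac{1}{729\pi^4}$ and integrating from $0$ to $t$ gives $y(t)^{-2} \geq y(0)^{-2} - \tfrac{1}{729\pi^4}t$, hence
\begin{equation}
\|S(\cdot,t)\|_{L^2}^2 = y(t) \leq \left(\frac{1}{\frac{1}{\|S^0\|_{L^2}^4} - \frac{1}{729\pi^4}\,t}\right)^{1/2},
\end{equation}
valid on the interval where the denominator remains positive. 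This is a completely routine ODE comparison once the coefficient is pinned down; the only care needed is noting that $y(0) = \|S^0\|_{L^2}^2$ so that $y(0)^{-2} = \|S^0\|_{L^2}^{-4}$, which matches the stated bound, and that the a priori control from Theorem~\ref{MildStrain} guarantees the estimate holds on all of $[0,T]$ within the existence interval.
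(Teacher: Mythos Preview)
Your proposal is correct and follows essentially the same route as the paper: start from the corollary $\partial_t\|S\|_{L^2}^2 \leq -2\|S\|_{\dot{H}^1}^2 + \tfrac{2}{9}\sqrt{6}\,\|S\|_{L^3}^3$, interpolate $\|S\|_{L^3}^3 \leq \|S\|_{L^2}^{3/2}\|S\|_{L^6}^{3/2}$, apply the sharp Sobolev constant, and then use Young's inequality with exponents $(4/3,4)$ so the $\dot H^1$ term is absorbed exactly by the dissipation, leaving the cubic differential inequality which is integrated by separation of variables. The only quibble is that Proposition~\ref{TensorIsometry} plays no role here; everything runs on the strain side and the isometry is not needed for this particular estimate.
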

\begin{proof}
For the first portion \eqref{enstrophy derivative} we begin by applying the interpolation inequality for $L^p$ to bound the $L^3$ norm by the $L^2$ and $L^6$ norms:
\begin{equation}
\int_{\mathbb{R}^3}|S|^3 \leq  \|S\|_2^{\frac{3}{2}}\|S\|_6^{\frac{3}{2}}.
\end{equation}

The sharp Sobolev inequality \cite{Talenti,SharpSobolev} states that:
\begin{equation}
\|S\|_{L^6} \leq  \left (\frac{1}{3^{\frac{3}{4}}}
\frac{2}{\pi} \right )^{\frac{2}{3}} \|S\|_{\dot{H^1}}.
\end{equation}
Observing that $\left (\frac{2}{3} \right )^{\frac{3}{2}}=\frac{2}{9}\sqrt{6},$ we can combine the above equations to find that:
\begin{equation}
\partial_t \|S(\cdot,t)\|_{L^2}^2 \leq -2\|S\|_{\dot{H^1}}^2 +
\left ( \frac{2}{3} \right )^{\frac{3}{2}} \frac{1}{3^{\frac{3}{4}}} \frac{2}{\pi}  \|S\|_{\dot{H^1}}^\frac{3}{2}\|S\|_{L^2}^\frac{3}{2}.
\end{equation}
We will regroup terms and write this as
\begin{equation}
\left ( \frac{2}{3} \right )^{\frac{3}{2}} \frac{1}{3^{\frac{3}{4}}} \frac{2}{\pi}  \|S\|_{\dot{H^1}}^\frac{3}{2}\|S\|_{L^2}^\frac{3}{2}=\left ( \left (\frac{8}{3} \right )^{\frac{3}{4}} \|S\|_{\dot{H^1}}^\frac{3}{2} \right ) 
\left (\frac{2^\frac{5}{2}}{2^\frac{9}{4}} \frac{1}{3^\frac{3}{4} \pi} \|S\|_{L^2}^\frac{3}{2} \right ) 
\end{equation}
We now apply Young's inequality using the conjugate exponents $\frac{4}{3}$ and $4$ to show that
\begin{equation}
\left ( \left (\frac{8}{3} \right )^{\frac{3}{4}} \|S\|_{\dot{H^1}}^\frac{3}{2} \right ) 
\left (\frac{2^\frac{5}{2}}{2^\frac{9}{4}} \frac{1}{3^\frac{3}{2} \pi} \|S\|_{L^2}^\frac{3}{2} \right )   \leq
\frac{3}{4}\left ( \left (\frac{8}{3} \right )^{\frac{3}{4}} \|S\|_{\dot{H^1}}^\frac{3}{2} \right )^\frac{4}{3}
+\frac{1}{4}\left (2^\frac{1}{4}\frac{1}{3^\frac{3}{2} \pi} \|S\|_{L^2}^\frac{3}{2} \right )^4
\end{equation}
Therefore we can conclude that
\begin{equation}
\partial_t \|S(\cdot, t)\|_{L^2}^2 \leq 
\frac{1}{1458 \pi^4}\|S(\cdot,t)\|_{L^2}^6
\end{equation}
This completes the proof of the first part of the theorem \eqref{enstrophy derivative}; the second portion of the theorem follows immediately from integrating this differential inequality.
\end{proof}

This is a significant improvement on the best known estimates for enstrophy growth. If we take the enstrophy to be one half of the square of the $L^2$ norm of the vorticity, $E(t)=\frac{1}{2}\|\omega(\cdot,t)\|_{L^2}^2=\|S(\cdot,t)\|_{L^2}^2,$ then this means
\begin{equation}
\partial_t E(t) \leq \frac{1}{1458 \pi^4} E(t)^3.
\end{equation}
The previous best known estimate for enstrophy growth
\cite{EnstrophyGrowth,EnstrophyGrowth2,Protas} was 
\begin{equation}
\partial_t E(t) \leq \frac{27}{8 \pi^4}E(t)^3.
\end{equation}
This work has recently been extended in \cite{Protas}, which shows numerically that solutions which are sharp for this inequality locally in time, actually tend to decay fairly quickly, and so are not good candidates for blowup. This work also suggested numerically that the constant $\frac{27}{8 \pi^4}$ is non-optimal, but we will note that this work was on the torus, which may result in a different constant than the whole space as, for example, the sharp Sobolev constant may not be the same. Nonetheless, the fairly drastic improvement in the constant is not too surprising, because $\frac{27}{8 \pi^4}$ was not expected to be sharp.

Previous papers investigating the growth of enstrophy, such as in \cite{EnstrophyGrowth2} simply apply the sharp  Gagliardo-Nirenberg inequality to bound $\int_{\mathbb{R}^3} -\Delta u \cdot \left ( (u \cdot \nabla)u \right )$. A finer analysis of the role of the strain matrix, using both the vorticity equation and the strain equation, allows us to drastically improve this estimate by a factor of $4{,}920.75.$
This is a huge quantitative improvement on the estimate for enstrophy growth, although the estimate is of course still cubic, so an improvement in the constant only increases the minimum time until a solution might blow up, it cannot rule out blowup.
In fact, Theorem \ref{MaxGrowth} provides almost immediately as a corollary an estimate for the quickest possible blowup time in terms of the initial enstrophy. First, we must define $T_{max},$ the maximal time of existence for a mild solution corresponding to some initial data $S^0 \in L^2_{st}.$
\begin{definition}[Maximal time of existence] \label{Tmax}
Suppose $S^0 \in L^2_{st}$ and 
$f\in L^2_{loc}\left ([0,T^*)L^2(\mathbb{R}^3) \right )$
Then the maximal time of existence for a smooth solution is
\begin{equation}
T_{max}=\sup \left \{ 0<T\leq T^*: S \in C \left([0,T],L^2_{st}
\right )\cap L^2 \left ([0,T]:\dot{H}^1(\mathbb{R}^3) \right ),
 S(\cdot,0)=S^0 \right \},
\end{equation}
where $S$ is a mild solution to the Navier-Stokes strain equation with initial data $S^0$ and external force $f.$
\end{definition}
We will note here that $T_{max}$ for a mild solution to the Navier-Stokes strain equation with initial data $S^0 \in L^2_{st}$ is equivalent to $T_{max}$ for a mild solution to the Navier-Stokes equation corresponding to initial data 
$u^0=-2 \divr (-\Delta)^{-1}S^0 \in \dot{H}^1.$ We will also note that when $f=0, T_{max}=+\infty$ corresponds to a global smooth solution, whereas $T_{max}<+\infty$ corresponds to a solution that develops singularities in finite time.
Whether or not there exist smooth solutions to the Navier-Stokes equation that develop singularities in finite time is one of the biggest open problems in partial differential equations, and is one of the Millennium Problems put forward by the Clay Mathematics Institute \cite{Clay}.
Definition \ref{Tmax} is directly related to this problem; the Millennium Problem could be stated equivalently as: show $T_{max}=+\infty$ 
for all $S^0\in L^2_{st}$ when $f=0$ or provide a counterexample.

We will now prove a lower bound on $T_{max}$ based on the growth estimate in Theorem \ref{MaxGrowth}. 
\begin{corollary}[Lower bound on time to blowup]
Suppose $S^0 \in L^2_{st}$ and $f=0.$ Then 
\begin{equation}
T_{max} \geq \frac{729 \pi^4}{\|S^0\|_{L^2}^4}.
\end{equation}
That is, for all $T<\frac{729 \pi^4}{\|S^0\|_{L^2}^4}$ there exists a mild solution $S \in C \left([0,T];L^2_{st} \right ) \cap L^2\left ([0,T]:\dot{H}^1(\mathbb{R}^3) \right )$ to the Navier-Stokes strain equation with initial data $S^0$, and this solution is smooth.
\end{corollary}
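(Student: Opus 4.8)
The plan is to read off the conclusion from the a priori bound already contained in Theorem \ref{MaxGrowth} and then promote it to a lower bound on $T_{max}$ by a standard continuation argument. Set $y(t) = \|S(\cdot,t)\|_{L^2}^2$. Since $f = 0$, Theorem \ref{MaxGrowth} supplies the differential inequality $y'(t) \leq \frac{1}{1458\pi^4} y(t)^3$ wherever the mild solution exists, and because $f = 0$ forces $S$ to be smooth on $(0,T]$, this holds in the classical sense. First I would integrate it by comparison with the ODE $z' = \frac{1}{1458\pi^4} z^3$: dividing by $y^3$ and using $\frac{y'}{y^3} = -\frac{1}{2}\frac{d}{dt}\frac{1}{y^2}$ yields
\begin{equation}
\frac{1}{\|S^0\|_{L^2}^4} - \frac{1}{y(t)^2} \leq \frac{t}{729\pi^4},
\end{equation}
which rearranges to the explicit bound in the second display of Theorem \ref{MaxGrowth}. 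The right-hand side of that bound is finite precisely for $t < \frac{729\pi^4}{\|S^0\|_{L^2}^4}$, so on this whole interval the subcritical quantity $\|S(\cdot,t)\|_{L^2}$ stays uniformly bounded.

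The remaining, and essential, step is the continuation argument converting this uniform bound into the claimed lower bound on the maximal time of existence from Definition \ref{Tmax}. I would argue by contradiction: suppose $T_{max} < \frac{729\pi^4}{\|S^0\|_{L^2}^4}$. By the a priori bound above there is a finite constant $M$ with $\|S(\cdot,t)\|_{L^2} \leq M$ for all $t < T_{max}$. Theorem \ref{MildStrain} provides a local existence time depending only on the $L^2_{st}$ norm of the initial data, so any mild solution starting from data of norm at most $M$ exists on an interval of length at least $\frac{C}{4 M^4} =: \delta > 0$. Choosing $t_0 < T_{max}$ with $T_{max} - t_0 < \delta$ and restarting the equation from the datum $S(\cdot, t_0) \in L^2_{st}$ produces a mild solution on $[t_0, t_0 + \delta]$; by uniqueness this agrees with $S$ where both are defined and hence extends $S$ as a mild solution in $C([0, t_0 + \delta]; L^2_{st}) \cap L^2([0, t_0 + \delta]; \dot{H}^1)$ past $T_{max}$, contradicting the definition of $T_{max}$. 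Therefore $T_{max} \geq \frac{729\pi^4}{\|S^0\|_{L^2}^4}$, and the smoothness of the solution on the corresponding open interval follows from the instantaneous smoothing asserted in Theorem \ref{MildStrain} for $f = 0$.

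I expect the main obstacle to be the bookkeeping in the continuation argument rather than any genuine difficulty: one must check that the local existence time in Theorem \ref{MildStrain} depends only on $\|S(\cdot,t_0)\|_{L^2}$ uniformly in $t_0$, that the restarted solution coincides with the original by uniqueness so that the two genuinely glue into a single mild solution of the required regularity class, and that the a priori $L^2$ bound, which a priori controls the norm only on $[0,T_{max})$, indeed persists up to $T_{max}$. All three points are standard for semilinear parabolic equations treated by the Fujita–Kato iteration, so once they are verified the corollary follows at once from Theorem \ref{MaxGrowth}.
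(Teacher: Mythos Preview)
Your proposal is correct and follows essentially the same route as the paper: invoke the a priori bound from Theorem \ref{MaxGrowth} to keep $\|S(\cdot,t)\|_{L^2}$ bounded on $[0,T]$ for any $T<\frac{729\pi^4}{\|S^0\|_{L^2}^4}$, then use the local existence from Theorem \ref{MildStrain} in a continuation argument to conclude $T_{max}\geq \frac{729\pi^4}{\|S^0\|_{L^2}^4}$, with smoothness coming from the instantaneous smoothing in Theorem \ref{MildStrain}. The paper's proof is terser---it simply notes that a mild solution can be extended past any time at which the $L^2$ norm remains bounded---whereas you spell out the restart-and-glue argument explicitly, but the content is the same.
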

\begin{proof}
Fix $T<\frac{729 \pi^4}{\|S^0\|_{L^2}^4}.$ In Theorem \ref{MildStrain} we showed that mild solutions must exist locally in time for any initial data $S^0\in L^2_{st}.$ From this it follows that unless $\|S(\cdot,t)\|_{L^2}$ becomes unbounded as $t \to \tau,$ then a mild solution $S \in C\left ((0,\tau);L^2_{st} \right )$ can be extended beyond $\tau$ to some $\tau'>\tau.$ It is clear from
Theorem \ref{MaxGrowth} that $\|S(\cdot,t)\|_{L^2}$ remains bounded for all $t\leq T<\frac{729 \pi^4}{\|S^0\|_{L^2}^4},$ so this establishes the existence of a mild solution $S \in C\left ([0,T];L^2_{st} \right )$. 
In particular, this implies that $\|S(\cdot,t)\|_{L^2}$ must become unbounded as $t \to T_{max}$ if $T_{max}<+\infty.$
We already showed in Theorem \ref{MildStrain} that mild solutions are smooth when $f=0$, so this completes the proof.
\end{proof}

Note that this is similar to the estimate in the initial theorem establishing the local in time existence of mild solutions; the only difference is that we have improved the constant. The estimate on local existence in \cite{MildSobolev} does not actually state the value of the constant $C>0.$
It is shown in \cite{Protas} that
\begin{equation}
T_{max}\geq \frac{4 \pi^4}{27\|S^0\|_{L^2}^4},
\end{equation}
although their statement is in terms of $E_0=\frac{1}{2}\|\omega^0\|_{L^2}^2,$  as their analysis does not focus on the strain.
The detailed analysis of the evolution of strain itself allows us to significantly sharpen the lower bounds on the minimal blowup time by a factor of $4{,}920.75$ from previous estimates, which were derived using standard harmonic analysis methods, and did not take full advantage of the structures that incompressible flow imposes on the Navier-Stokes problem, for instance that the strain matrix must be trace free.

Now that we have finished outlining the main estimates that can be derived from the Navier-Stokes strain equation, we will go on to use these estimates to prove a new regularity criterion in terms of the middle eigenvalue of the strain matrix. We will then consider a toy model ODE that captures some of the features of the quadratic term and tells us a little bit about the local structure of blow-up solutions, in particular what the distribution of eigenvalues will tend towards.

\section{Regularity criteria for the eigenvalues of the strain}
In this section we will prove the main result of the paper, Theorem \ref{IntroEigenRegCrit}, as well as some immediate corollaries that were also stated in the introduction.
Before we can prove that regularity criteria, we will need to prove a lemma bounding the growth of enstrophy in terms of $\lambda_2^+$.
\begin{lemma}[Middle eigenvalue determinant bound] \label{EigenBound}
Suppose $S \in C \left([0,T];L^2_{st} \right ) \cap L^2\left ([0,T]:\dot{H}^1(\mathbb{R}^3) \right )$ is a mild solution to the Navier-Stokes strain equation with external force $f\in L^2\left ((0,T^*);L^2\left (\mathbb{R}^3\right)\right )$. and $S(x)$ has eigenvalues $\lambda_1(x) \leq \lambda_2(x) \leq \lambda_3 (x).$ Define 
\begin{equation}
\lambda_2^+(x)=\max \{\lambda_2(x),0\}.
\end{equation}
Then
\begin{equation}
-\det(S) \leq \frac{1}{2} |S|^2 \lambda_2^+.
\end{equation}
and for almost all $0<t\leq T,$
\begin{equation} \label{gcon}
\partial_{t}\|S(\cdot,t)\|_{L^2}^2 \leq 
-\|S\|_{\dot{H}^1}^2
+2 \int_{\mathbb{R}^3} \lambda_2^+|S|^2
+\frac{1}{2}\|f\|_{L^2}^2.
\end{equation}
\end{lemma}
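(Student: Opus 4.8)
The plan is to establish the pointwise algebraic inequality $-\det(S) \leq \tfrac{1}{2}|S|^2 \lambda_2^+$ first, and then feed it into the alternative enstrophy growth identity of Corollary \ref{DET2} to obtain \eqref{gcon}, with the external force term absorbed by the dissipation via Young's inequality. The entire argument is pointwise-then-integrate, so the only genuinely new content is the scalar inequality among the eigenvalues; everything else is bookkeeping with identities already proven.

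For the pointwise bound I would diagonalize $S$ and use the trace-free constraint $\lambda_1 = -(\lambda_2+\lambda_3)$, splitting into two cases according to the sign of $\lambda_2$. If $\lambda_2 \leq 0$, then $\lambda_2^+ = 0$ and the ordering $\lambda_1 \leq \lambda_2 \leq 0$ forces $\lambda_3 = -\lambda_1-\lambda_2 \geq 0$, so $\det(S)=\lambda_1\lambda_2\lambda_3 \geq 0$ as a product of two nonpositive factors and one nonnegative factor; hence $-\det(S) \leq 0 = \tfrac{1}{2}|S|^2\lambda_2^+$. If $\lambda_2 > 0$, then $\lambda_1 < 0 < \lambda_2 \leq \lambda_3$, so $-\lambda_1 = \lambda_2 + \lambda_3$ and $\lambda_2^+ = \lambda_2$. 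Here I would drop the nonnegative term $\lambda_2^2$ and apply AM--GM to the remaining two:
\begin{equation}
|S|^2 \geq \lambda_1^2 + \lambda_3^2 \geq 2|\lambda_1|\,\lambda_3 = 2(\lambda_2+\lambda_3)\lambda_3,
\end{equation}
so that $\tfrac{1}{2}|S|^2\lambda_2 \geq (\lambda_2+\lambda_3)\lambda_3\lambda_2 = -\lambda_1\lambda_2\lambda_3 = -\det(S)$. This proves the first claim in both cases.

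To obtain \eqref{gcon} I would start from Corollary \ref{DET2},
\begin{equation}
\partial_t \|S(\cdot,t)\|_{L^2}^2 = -2\|S\|_{\dot{H}^1}^2 - 4\int_{\mathbb{R}^3}\det(S) + \left<-\Delta u, f\right>,
\end{equation}
integrate the pointwise bound to get $-4\int \det(S) \leq 2\int_{\mathbb{R}^3}\lambda_2^+|S|^2$, and then control the force term. By Cauchy--Schwarz, $\left<-\Delta u, f\right> \leq \|{-\Delta u}\|_{L^2}\|f\|_{L^2}$, and Proposition \ref{TensorIsometry} (together with the Fourier-side identity $|\widehat{\nabla\otimes u}| = 2\pi|\xi||\hat{u}|$) gives $\|{-\Delta u}\|_{L^2}^2 = \|\nabla\otimes u\|_{\dot{H}^1}^2 = 2\|S\|_{\dot{H}^1}^2$. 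Young's inequality in the form $\sqrt{2}\,ab \leq a^2 + \tfrac{1}{2}b^2$ with $a = \|S\|_{\dot{H}^1}$, $b = \|f\|_{L^2}$ then yields $\left<-\Delta u, f\right> \leq \|S\|_{\dot{H}^1}^2 + \tfrac{1}{2}\|f\|_{L^2}^2$. Substituting both estimates, one copy of $\|S\|_{\dot{H}^1}^2$ is absorbed into the $-2\|S\|_{\dot{H}^1}^2$ dissipation, leaving precisely \eqref{gcon}.

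I expect no serious obstacle here; the only point demanding care is choosing the Young's-inequality weights so that exactly one factor of $\|S\|_{\dot{H}^1}^2$ survives to cancel with the dissipation, which relies on the sharp constant in $\|{-\Delta u}\|_{L^2}^2 = 2\|S\|_{\dot{H}^1}^2$. One should also note that, since the solution is only assumed to lie in $C([0,T];L^2_{st})\cap L^2([0,T];\dot{H}^1)$ with a merely $L^2_tL^2_x$ force, the identity of Corollary \ref{DET2} and hence \eqref{gcon} hold only for almost every $t$, which is why the statement is phrased almost everywhere.
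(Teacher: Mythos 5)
Your proposal is correct and follows essentially the same route as the paper: the pointwise bound rests on $-\lambda_1\lambda_3 \leq \tfrac{1}{2}(\lambda_1^2+\lambda_3^2) \leq \tfrac{1}{2}|S|^2$ (your AM--GM step is the paper's Young's inequality step, with your case split on the sign of $\lambda_2$ replacing the paper's uniform observation that $-\lambda_1\lambda_3 \geq 0$ allows $\lambda_2 \leq \lambda_2^+$), and the differential inequality is obtained exactly as in the paper by combining Corollary \ref{DET2} with Cauchy--Schwarz, the isometry $\|{-\Delta u}\|_{L^2} = \sqrt{2}\|S\|_{\dot{H}^1}$, and Young's inequality so that one copy of $\|S\|_{\dot{H}^1}^2$ is absorbed by the dissipation. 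No gaps.
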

\begin{proof}
We will begin by noting that $\lambda_1 \leq 0$ and $\lambda_3 \geq 0,$ so clearly, $-\lambda_1 \lambda_3 \geq 0.$ This implies that
\begin{align}
-\det(S)&=(-\lambda_1 \lambda_3) \lambda_2\\
&\leq (-\lambda_1 \lambda_3) \lambda_2^+.
\end{align}
Next we can apply Young's Inequality to show that
\begin{align}
-\lambda_1 \lambda_3 &\leq 
\frac{1}{2}(\lambda_1^2+ \lambda_3^2)\\
&\leq \frac{1}{2}
(\lambda_1^2+\lambda_2^2+\lambda_3^2)\\
&=\frac{1}{2} |S|^2.
\end{align}
We can combine these inequalities and conclude that
\begin{equation}
-\det(S) \leq \frac{1}{2} |S|^2 \lambda_2^+.
\end{equation}
Next we apply H\"older's inequality, Proposition \ref{isometry}, and Young's inequality to conclude that
\begin{align}
    \left <-\Delta u,f\right > &\leq
    \|-\Delta u\|_{L^2}\|f\|_{L^2}\\
    &=\sqrt{2}\|S\|_{\dot{H}^1}\|f\|_{L^2}\\
    &\leq \|S\|_{\dot{H}^1}^2
    +\frac{1}{2}\|f\|_{L^2}^2.
\end{align}
Recall from Corollary \ref{DET2}, that 
\begin{equation}
    \partial_t\|S\|_{L^2}^2=-2\|S\|_{\dot{H}^1}^2-4\int \det (S)+\left <-\Delta u,f \right >,
\end{equation}
and this completes the proof.
\end{proof}
With this bound, we are now ready to prove the main result of the paper. This is Theorem \ref{IntroEigenRegCrit} from the introduction, which is restated here for the reader's convenience.

\begin{theorem}[Middle eigenvalue of strain characterizes the blow-up time] \label{RegCrit}
Let $u \in C\left([0,T];\dot{H}^1\left (\mathbb{R}^3\right ) \right )
\cap L^2\left ([0,T];\dot{H}^2\left (\mathbb{R}^3\right )\right ),$ for all $T<T_{max}$ be a mild solution to the Navier-Stokes equation with force 
$f\in L^2_{loc}\left ((0,T^*);L^2\left (\mathbb{R}^3\right)\right )$.
If $\frac{2}{p}+\frac{3}{q}=2,$ with $\frac{3}{2}<q \leq + \infty,$ then
\begin{equation} \label{gronwall}
\|u(\cdot,T)\|_{\dot{H}^1}^2 \leq 
\left(\left\|u^0\right\|_{\dot{H}^1}^2 + \int_0^T \|f(\cdot,t)\|_{L^2}^2 dt \right)
\exp \left (C_q \int_{0}^T\|\lambda_2^+(\cdot,t)\|_{L^q\left (\mathbb{R}^3\right )}^p dt \right ),
\end{equation}
with the constant $C_q$ depending only on $q.$
In particular if the maximal existence time for a mild solution $T_{max}<T^*,$ then 
\begin{equation}
\int_{0}^{T_{max}}\|\lambda_2^+(\cdot,t)\|_{L^q(\mathbb{R}^3)}^p dt= + \infty.
\end{equation}
\end{theorem}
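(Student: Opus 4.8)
The plan is to convert the differential inequality of Lemma~\ref{EigenBound} into a linear-in-enstrophy Gr\"onwall inequality and then integrate. Write $E(t)=\|S(\cdot,t)\|_{L^2}^2$, which by the isometry of Proposition~\ref{TensorIsometry} equals $\tfrac12\|u(\cdot,t)\|_{\dot H^1}^2$. Lemma~\ref{EigenBound} already supplies
\begin{equation}
\partial_t E(t)\leq -\|S\|_{\dot H^1}^2+2\int_{\mathbb{R}^3}\lambda_2^+|S|^2+\tfrac12\|f\|_{L^2}^2,
\end{equation}
so the only real work is to estimate $\int \lambda_2^+|S|^2$ by a quantity that the dissipation $-\|S\|_{\dot H^1}^2$ can absorb, leaving a term proportional to $\|\lambda_2^+\|_{L^q}^p\,E$.

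First I would apply H\"older's inequality with exponents $q$ and $q'=\tfrac{q}{q-1}$ to get $\int\lambda_2^+|S|^2\leq \|\lambda_2^+\|_{L^q}\,\|S\|_{L^{2q'}}^2$. The exponent $2q'$ lies in $(2,6)$ exactly when $q\in(\tfrac32,\infty)$, so I would interpolate $\|S\|_{L^{2q'}}$ between $L^2$ and $L^6$ and use the Sobolev embedding $\dot H^1\hookrightarrow L^6$. A short computation gives the interpolation weight $\theta=\tfrac{3}{2q}$, hence
\begin{equation}
\int_{\mathbb{R}^3}\lambda_2^+|S|^2\leq C\,\|\lambda_2^+\|_{L^q}\,\|S\|_{L^2}^{2-\frac3q}\,\|S\|_{\dot H^1}^{\frac3q}.
\end{equation}

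Next I would absorb the factor $\|S\|_{\dot H^1}^{3/q}$ into the dissipation via Young's inequality. Since $q>\tfrac32$ forces $\tfrac3q<2$, the conjugate exponents $\tfrac{2q}{3}$ and $\tfrac{2q}{2q-3}$ are admissible, and choosing the splitting so that the $\|S\|_{\dot H^1}$ factor is raised to the power $2$ yields a term $\eta\|S\|_{\dot H^1}^2$ with $\eta\leq 1$ plus a residual. The bookkeeping is dictated by the scaling relation $\tfrac2p+\tfrac3q=2$: the exponent on $\|S\|_{L^2}$ in the residual comes out to $(2-\tfrac3q)\cdot\tfrac{2q}{2q-3}=2$ and the exponent on $\|\lambda_2^+\|_{L^q}$ is exactly $\tfrac{2q}{2q-3}=p$. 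After absorbing $\eta\|S\|_{\dot H^1}^2$ against $-\|S\|_{\dot H^1}^2$ this leaves
\begin{equation}
\partial_t E(t)\leq C_q\,\|\lambda_2^+(\cdot,t)\|_{L^q}^p\,E(t)+\tfrac12\|f(\cdot,t)\|_{L^2}^2.
\end{equation}
Gr\"onwall's inequality then gives $E(T)\leq\bigl(E(0)+\int_0^T\tfrac12\|f\|_{L^2}^2\bigr)\exp\!\bigl(C_q\int_0^T\|\lambda_2^+\|_{L^q}^p\bigr)$, and multiplying by $2$ and using $E=\tfrac12\|u\|_{\dot H^1}^2$ recovers \eqref{gronwall}. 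The blow-up statement then follows because if $\int_0^{T_{max}}\|\lambda_2^+\|_{L^q}^p\,dt$ were finite, then \eqref{gronwall} would keep $\|u(\cdot,t)\|_{\dot H^1}$ bounded up to $T_{max}$, contradicting the maximality of $T_{max}$.

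I expect the main obstacle to be purely in the exponent bookkeeping --- verifying that the single scaling constraint $\tfrac2p+\tfrac3q=2$ simultaneously makes the Young exponent admissible (this is where the hypothesis $q>\tfrac32$ enters, via $\tfrac3q<2$) and forces the residual to be linear in $E$ with the correct power $p$ on $\|\lambda_2^+\|_{L^q}$, so that Gr\"onwall closes. The only genuinely separate case is the endpoint $q=+\infty$, $p=1$: there $2q'=2$, so no interpolation or Young step is needed, and one bounds $\int\lambda_2^+|S|^2\leq\|\lambda_2^+\|_{L^\infty}\|S\|_{L^2}^2$ and discards the dissipation directly before applying Gr\"onwall.
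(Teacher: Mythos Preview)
Your proposal is correct and follows essentially the same route as the paper: start from Lemma~\ref{EigenBound}, apply H\"older with exponents $q,q'$, interpolate $\|S\|_{L^{2q'}}$ between $L^2$ and $L^6$ (via Sobolev), use Young's inequality with the conjugate pair $\tfrac{2q}{3}$ and $p=\tfrac{2q}{2q-3}$ to absorb the $\dot H^1$ factor into the dissipation, and close with Gr\"onwall; the endpoint $q=\infty$ is handled separately exactly as you describe. The exponent bookkeeping you flag is indeed the only content, and you have it right.
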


\begin{proof}
First we will note that $\|u(\cdot,t)\|_{\dot{H}^1}^2$ must become unbounded as $t\to T_{max}$ if the mild solution cannot be extended beyond some time $T_{max}<T^*,$ so it suffices to prove the bound \eqref{gronwall}.
Applying Proposition \ref{TensorIsometry}, it is equivalent to show that
\begin{equation}
\|S(\cdot,T)\|_{L^2}^2 \leq\left(\left\|S^0\right\|_{L^2}^2 +\frac{1}{2}\int_0^T \|f(\cdot,t)\|_{L^2}^2 dt\right )
\exp \left ( C_q \int_{0}^T\|\lambda_2^+(\cdot,t)\|_{L^q\left (\mathbb{R}^3\right )}^p dt\right ).
\end{equation}
To begin we recall the conclusion in Lemma \ref{EigenBound}
\begin{equation}
\partial_{t}\|S(\cdot,t)\|_{L^2}^2 \leq -\|S\|_{\dot{H}^1}^2
+2 \int_{\mathbb{R}^3} \lambda_2^+|S|^2+\frac{1}{2}\|f\|_{L^2}^2.
\end{equation}

First we will consider the case $q=+\infty.$
Applying H\"older's inequality with exponents $1$ and $+ \infty$ we see that,
\begin{equation}
\partial_{t}\|S(\cdot,t)\|_{L^2}^2 \leq
2\|\lambda_2^+\|_{L^\infty}\|S\|_{L^2}^2
+\frac{1}{2}\|f\|_{L^2}^2.
\end{equation}
Now we can apply Gronwall's inequality and find that
\begin{equation}
\|S(\cdot,T)\|_{L^2}^2 \leq
\left (\left\|S^0\right\|_{L^2}^2 +\frac{1}{2}\int_0^T \|f(\cdot,t)\|_{L^2}^2 dt\right )
\exp \left (2 \int_0^T\|\lambda_2^+\|_{L^\infty} dt\right ).
\end{equation}
Now we will consider the case $\frac{3}{2}<q<+\infty.$
We will begin by applying H\"older's inequality to \eqref{gcon}, 
taking $\frac{1}{q}+\frac{1}{a}=1,$
\begin{equation}
\partial_{t}\|S(\cdot,t)\|_{L^2}^2 \leq -\|S\|_{\dot{H}^1}^2
+2\|\lambda_2^+\|_{L^q}\|S\|_{L^{2a}}^2
+\frac{1}{2}\|f\|_{L^2}^2.
\end{equation}
Applying the Sobolev inequality we find
\begin{equation}
\partial_{t}\|S(\cdot,t)\|_{L^2}^2 \leq -C\|S\|_{L^6}^2
+2\|\lambda_2^+\|_{L^q}\|S\|_{L^{2a}}^2
+\frac{1}{2}\|f\|_{L^2}^2.
\end{equation}
Noting that $q>\frac{3}{2},$ it follows that $a<3,$ so $2a<6.$ Take 
$\sigma \in (0,1),$ such that $\frac{1}{2a}=\sigma \frac{1}{2}+ 
(1-\sigma)\frac{1}{6}.$ Then interpolating between $L^2$ and $L^6$ we find that
\begin{equation}
\partial_{t}\|S(\cdot,t)\|_{L^2}^2 \leq -C\|S\|_{L^6}^2
+2\|\lambda_2^+\|_{L^q} \|S\|_{L^2}^{2\sigma}\|S\|_{L^6}^{2(1-\sigma)}
+\frac{1}{2}\|f\|_{L^2}^2.
\end{equation}
We know that $\frac{\sigma}{3}+\frac{1}{6}=\frac{1}{2a},$ so 
$\sigma=\frac{3}{2a}-\frac{1}{2}.$ $\frac{1}{a}=1-\frac{1}{q},$ so 
$\sigma=1-\frac{3}{2q}.$ Therefore we conclude that
\begin{equation}
\partial_{t}\|S(\cdot,t)\|_{L^2}^2 \leq -C\|S\|_{L^6}^2
+2\|\lambda_2^+\|_{L^q} \|S\|_{L^2}^{2-\frac{3}{q}}\|S\|_{L^6}^{\frac{3}{q}}
+\frac{1}{2}\|f\|_{L^2}^2.
\end{equation}
Now take $b=\frac{2q}{3}.$ That means $1<b<+\infty.$ Define $p$ by 
$\frac{1}{p}+\frac{1}{b}=1,$ and apply Young's inequality with exponents $p$ and $b,$ and we find that
\begin{equation}
\partial_{t}\|S(\cdot,t)\|_{L^2}^2 \leq -C\|S\|_{L^6}^2
+C_q\left (\|\lambda_2^+\|_{L^q} \|S\\|_{L^2}^{2-\frac{3}{q}}\right )^p+C\|S\|_{L^6}^{b\frac{3}{q}}
+\frac{1}{2}\|f\|_{L^2}^2.
\end{equation}
Note that $\frac{1}{p}=1-\frac{1}{b}=1-\frac{3}{2q}$. This means that $p(2-\frac{3}{q})=2$ and that $\frac{2}{p}+\frac{3}{q}=2,$ and we know by definition that $b\frac{3}{q}=2,$ so 
\begin{equation}
\partial_{t}\|S(\cdot,t)\|_{L^2}^2 \leq C_q \|\lambda_2^+\|_{L^q}^p \|S\|_{L^2}^2
+\frac{1}{2}\|f\|_{L^2}^2.
\end{equation}
Applying Gronwall's inequality we find that
\begin{equation}
\|S(\cdot,T)\|_{L^2}^2 \leq \left ( \left\|S^0\right\|_{L^2}^2
+\frac{1}{2}\int_0^T\|f\|_{L^2}^2 dt \right )
\exp \left (C_q \int_{0}^T\|\lambda_2^+\|_{L^q\left (\mathbb{R}^3\right )}^p dt\right).
\end{equation}
This completes the proof.
\end{proof}

We will note here that the case $p=1, q=+\infty$ corresponds to the Beale-Kato-Majda criterion, so it may be possible to show that in this case the regularity criterion holds for the Euler equations as well as the Navier-Stokes equations. Note in particular that we did not use the dissipation to control the enstrophy, so there is a natural path to extend the result to solutions of the Euler equation as well. There is more work to do however, as bounded enstrophy is not sufficient to guarantee regularity for solutions to the Euler equations.

There is also an open question at the other boundary case, $p=+\infty$ $q=\frac{3}{2}.$ This would likely be quite difficult as the methods used in \cite{SereginL3,Gallagher} to extend the Prodi-Serrin-Ladyzhenskaya regularity criterion to the boundary case $p=+\infty,$ $q=3$ were much more technical than the methods in \cite{Ladyzhenskaya,Prodi,Serrin}. In particular, when $p=+\infty$ it is no longer adequate to rely on the relevant Sobolev embeddings, because we cannot apply Gronwall's inequality. Nonetheless, it is natural to suspect based on Theorem \ref{RegCrit} that if $u$ is a smooth solution to the Navier-Stokes equation with a maximal time of existence, $T_{max}<+\infty,$ then
\begin{equation}
\limsup_{t \to T_{max}} \|\lambda_2^+(\cdot,t)\|_{L^\frac{3}{2}}=+\infty.
\end{equation}

While we cannot prove this result, we can prove the following weaker statement.
\begin{theorem}[Regularity criterion in the borderline case]
Let $u \in C\left([0,T];\dot{H}^1\left (\mathbb{R}^3\right ) \right )
\cap L^2\left ([0,T];\dot{H}^2\left (\mathbb{R}^3\right )\right ),$ for all $T<T_{max}$ be a mild solution to the Navier-Stokes equation with force 
$f\in L^2_{loc}\left ((0,T^*);L^2\left (\mathbb{R}^3\right)\right )$.
If $T_{max}<T^*,$ then
\begin{equation}
    \limsup_{t \to T_{max}}
    \|\lambda_2^+(\cdot,t)\|_{L^\frac{3}{2}}
    \geq \frac{1}{C_s^2},
\end{equation}
where $C_s=\frac{1}{\sqrt{3}}\left(
    \frac{2}{\pi}\right)^\frac{2}{3}$ is the constant in the sharp Sobolev inequality that controls the embedding $\dot{H}^1\left(\mathbb{R}^3\right) \subset 
    L^6\left(\mathbb{R}^3\right).$
\end{theorem}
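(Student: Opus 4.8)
The plan is to argue by contradiction, exploiting the fact that $q=\frac{3}{2}$ is exactly the exponent at which the quadratic term in the enstrophy identity scales like the dissipation, so that smallness of $\|\lambda_2^+\|_{L^{3/2}}$ can be converted directly into a \emph{sign} on the coefficient of the dissipation. This is the essential difference from Theorem \ref{RegCrit}: when $p<+\infty$ one may tolerate a positive coefficient and close the estimate by Gronwall, whereas at the endpoint $p=+\infty$ no such integration is available, and the only way to keep the enstrophy bounded up to $T_{max}$ is to force the coefficient of $\|S\|_{\dot{H}^1}^2$ to be non-positive. Accordingly I would suppose, for contradiction, that $\limsup_{t\to T_{max}}\|\lambda_2^+(\cdot,t)\|_{L^{3/2}}<\frac{1}{C_s^2}$, so that there exist $\epsilon>0$ and $t_0<T_{max}$ with $\|\lambda_2^+(\cdot,t)\|_{L^{3/2}}\le \frac{1}{C_s^2}-\epsilon$ for every $t\in[t_0,T_{max})$.

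The crucial point---and the one place where care is needed to reach the sharp constant $\frac{1}{C_s^2}$ rather than $\frac{1}{2C_s^2}$---is that I would \emph{not} start from the differential inequality \eqref{gcon}, since that inequality has already spent one of the two available factors of dissipation on the forcing term via Young's inequality. Instead I would return to the exact identity of Corollary \ref{DET2}, retaining the full $-2\|S\|_{\dot{H}^1}^2$, and insert the pointwise determinant bound $-\det(S)\le \frac{1}{2}|S|^2\lambda_2^+$ from Lemma \ref{EigenBound}. Hölder's inequality with exponents $\frac{3}{2}$ and $3$ gives $\int_{\mathbb{R}^3}\lambda_2^+|S|^2 \le \|\lambda_2^+\|_{L^{3/2}}\|S\|_{L^6}^2$, and the sharp Sobolev embedding $\|S\|_{L^6}^2\le C_s^2\|S\|_{\dot{H}^1}^2$ then yields $-4\int \det(S)\le 2C_s^2\|\lambda_2^+\|_{L^{3/2}}\|S\|_{\dot{H}^1}^2$. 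Combined with the contradiction hypothesis, this produces $\partial_t\|S\|_{L^2}^2 \le -2C_s^2\epsilon\,\|S\|_{\dot{H}^1}^2+\langle -\Delta u,f\rangle$, in which the coefficient $-2C_s^2\epsilon$ is a strictly negative surplus of dissipation.

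I would then spend only a small fraction of this surplus to absorb the forcing. Writing $\langle-\Delta u,f\rangle\le \sqrt{2}\,\|S\|_{\dot{H}^1}\|f\|_{L^2}$, using $\|-\Delta u\|_{L^2}=\sqrt{2}\,\|S\|_{\dot{H}^1}$ as in the proof of Lemma \ref{EigenBound}, and applying Young's inequality with the weight tuned to $C_s^2\epsilon$, one gets $\langle -\Delta u,f\rangle \le C_s^2\epsilon\,\|S\|_{\dot{H}^1}^2+\frac{1}{2C_s^2\epsilon}\|f\|_{L^2}^2$. Substituting and discarding the leftover negative dissipation leaves the clean inequality $\partial_t\|S\|_{L^2}^2\le \frac{1}{2C_s^2\epsilon}\|f\|_{L^2}^2$ on $[t_0,T_{max})$. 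Integrating, and using that $f\in L^2_{loc}((0,T^*);L^2)$ with $T_{max}<T^*$ forces $\int_{t_0}^{T_{max}}\|f\|_{L^2}^2\,dt<+\infty$, shows that $\|S(\cdot,t)\|_{L^2}$ stays bounded as $t\to T_{max}$. By Proposition \ref{TensorIsometry} this bounds $\|u(\cdot,t)\|_{\dot{H}^1}$, contradicting the fact that the enstrophy must blow up at a genuine maximal existence time $T_{max}<T^*$.

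The main obstacle is entirely in the bookkeeping of the dissipation: the sharp threshold $\frac{1}{C_s^2}$ appears only if both factors of $\|S\|_{\dot{H}^1}^2$ are preserved until after the cubic term has been controlled by Hölder and sharp Sobolev, and only if the forcing is then absorbed with an arbitrarily thin slice of the remaining dissipation---which is available precisely because the strict inequality in the $\limsup$ hypothesis furnishes a fixed gap $\epsilon>0$. When $f=0$ the argument collapses to the one-line remark that the coefficient of $\|S\|_{\dot{H}^1}^2$ is non-positive, so the enstrophy is non-increasing and cannot blow up.
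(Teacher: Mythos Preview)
Your proposal is correct and follows essentially the same argument as the paper: argue by contradiction, retain the full $-2\|S\|_{\dot{H}^1}^2$ from Corollary~\ref{DET2} before applying Young to the forcing, bound $-4\int\det(S)$ via the pointwise determinant estimate together with H\"older and the sharp Sobolev inequality, use the assumed gap in $\|\lambda_2^+\|_{L^{3/2}}$ to leave a strictly negative coefficient on the dissipation, and then absorb the forcing term with Young's inequality to obtain a finite bound on $\|S(\cdot,t)\|_{L^2}^2$ up to $T_{max}$. The only differences are cosmetic---your normalization of $\epsilon$ and the weight in Young's inequality differ from the paper's by a factor of $C_s^2$, which is immaterial.
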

\begin{proof}
Suppose toward contradiction that $T_{max}<T^*$ and
\begin{equation}
    \limsup_{t \to T_{max}}
    \|\lambda_2^+(\cdot,t)\|_{L^\frac{3}{2}}
    < \frac{1}{C_s^2}.
\end{equation}
Then there must exist $\epsilon, \delta>0,$ such that for all $T_{max}-\delta<t<T_{max},$ 
\begin{equation}
C_s^2 \|\lambda_2^+(\cdot,t)\|_{L^\frac{3}{2}}
<1-\epsilon.
\end{equation}
Recall from the proof of Lemma \ref{EigenBound} that 
\begin{align}
    \partial_t \|S(\cdot,t)\|_{L^2}^2&\leq
    -2\|S\|_{\dot{H}^1}^2+ 
    2\int_{\mathbb{R}^3}\lambda_2^+|S|^2
    +\sqrt{2}\|S\|_{\dot{H}^1}\|f\|_{L^2}\\
    &\leq 
    -2 \|S\|_{\dot{H}^1}^2+ 
    2\|\lambda_2^+\|_{L^\frac{3}{2}}\|S\|_{L^6}^2
    +\sqrt{2}\|S\|_{\dot{H}^1}\|f\|_{L^2}\\
    &\leq
    -2 \|S\|_{\dot{H}^1}^2+ 
    2C_s^2\|\lambda_2^+\|_{L^\frac{3}{2}}
    \|S\|_{\dot{H}^1}^2
    +\sqrt{2}\|S\|_{\dot{H}^1}\|f\|_{L^2},
\end{align}
where we have applied H\"older's inequality and the sharp Sobolev inequality.

Next we recall that by hypothesis, for all $T_{max}-\delta<t<T_{max},$ 
\begin{equation}
C_s^2\|\lambda_2^+ \|_{L^\frac{3}{2}}-1<-\epsilon.
\end{equation}
Using this fact and applying Young's inequality, we find
\begin{align}
    \partial_t \|S(\cdot,t)\|_{L^2}^2&\leq
    -2 \epsilon\|S\|_{\dot{H}^1}^2+
    \sqrt{2}\|S\|_{\dot{H}^1}\|f\|_{L^2}\\
    &\leq 
    \frac{1}{4 \epsilon}\|f\|_{L^2}^2.
\end{align}

Integrating this differential inequality we find that
\begin{equation}
    \limsup_{t \to T_{max}}\|S(\cdot,t)\|_{L^2}^2\leq
    \|S(\cdot, T_{max}-\delta)\|_{L^2}^2+
    \frac{1}{4 \epsilon}
    \int_{T_{max}-\delta}^{T_{max}} 
    \|f(\cdot,t)\|_{L^2}^2 dt
    <+\infty,
\end{equation}
which is a contradiction because $T_{max}<T^*$ implies that
\begin{equation}
    \limsup_{t \to T_{max}} \|S(\cdot,t)\|_{L^2}^2=+\infty.
\end{equation}
This completes the proof.
\end{proof}

Note that the boundary case in our paper is $q=\frac{3}{2},$ not $q=3.$ This is because the regularity criterion in \cite{SereginL3,Gallagher} is on $u,$ whereas our regularity criterion is on an eigenvalue of the strain matrix, which scales like $\nabla \otimes u.$ This is directly related to the Sobolev embedding $W^{1,\frac{3}{2}}(\mathbb{R}^3) \subset L^3(\mathbb{R}^3).$

Theorem \ref{RegCrit} is one of few regularity criteria for the Navier-Stokes equations involving a signed quantity, which is not too surprising, given that the Navier-Stokes equation is a vector valued equation. Even the scalar regularity criteria based on only one component of $u$ do not involve signed quantities \cite{Chemin}. The only other regularity criterion for the Navier-Stokes equation involving a signed quantity---at least to the knowledge of the author---is the regularity criterion proved by Seregin and \v{S}ver\'ak \cite{SereginPressure} that for a smooth solution to the Navier-Stokes equation to blowup in finite time, $p$ must become unbounded below and $p+\frac{1}{2}|u|^2$ must become unbounded above. 

We will also make a remark about the relationship between this result and the regularity criterion on one component of the gradient tensor $\frac{\partial u_j}{\partial x_i}$ in \cite{CaoTiti}. A natural question to ask in light of this regularity criterion is whether it is possible to prove a regularity criterion on just one entry of the strain tensor $S_{ij}.$ This paper does not answer this question, however we do prove a regularity criterion on just one diagonal entry of the diagonalization of the strain tensor.
\begin{corollary}[Any eigenvalue of strain characterizes the blow-up time] \label{RegCor}
Let $u \in C\left([0,T];\dot{H}^1\left (\mathbb{R}^3\right ) \right )
\cap L^2\left ([0,T];\dot{H}^2\left (\mathbb{R}^3\right )\right ),$ for all $T<T_{max}$ be a mild solution to the Navier-Stokes equation with force 
$f\in L^2_{loc}\left ((0,T^*);L^2\left (\mathbb{R}^3\right)\right )$.
If $\frac{2}{p}+\frac{3}{q}=2,$ with $\frac{3}{2}<q \leq + \infty,$ then
\begin{equation}
\|u(\cdot,T)\|_{\dot{H}^1}^2 \leq \left ( \left \|u^0\right \|_{\dot{H}^1}^2+\int_0^T\|f(\cdot,t)\|_{L^2}^2 dt \right )
\exp \left ({C_q \int_{0}^T\|\lambda_i(\cdot,t)\|_{L^q\left (\mathbb{R}^3\right )}^p dt} \right ),
\end{equation}
with the constant $C_q$ depending only on $q.$
In particular if $T_{max}<T^*,$ then 
\begin{equation}
\int_{0}^{T_{max}}\|\lambda_i(\cdot,t)\|_{L^q(\mathbb{R}^3)}^p dt= + \infty.
\end{equation}
\end{corollary}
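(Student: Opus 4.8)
The plan is to reduce this to Theorem \ref{RegCrit} by a purely algebraic observation about the eigenvalues of a trace-free symmetric matrix. The key point is that Theorem \ref{RegCrit} already gives the bound for the middle eigenvalue $\lambda_2$, and the determinant bound in Lemma \ref{EigenBound} was the only place where the specific choice of $\lambda_2$ entered. So I would first re-examine which eigenvalue genuinely controls $-\det(S)$, and observe that the same argument goes through with $\lambda_i$ in place of $\lambda_2^+$ for \emph{any} fixed index $i$, at the cost of possibly using the unsigned eigenvalue rather than its positive part.

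Concretely, the heart of Lemma \ref{EigenBound} was the inequality $-\det(S) \leq \frac{1}{2}|S|^2 \lambda_2^+$, obtained by writing $-\det(S) = (-\lambda_1\lambda_3)\lambda_2$ and using $-\lambda_1\lambda_3 \geq 0$ together with $-\lambda_1\lambda_3 \leq \frac{1}{2}|S|^2$. For a general index $i$ I would instead factor $-\det(S) = -\lambda_i \lambda_j \lambda_k$ (where $\{i,j,k\}=\{1,2,3\}$) and bound $|\lambda_j \lambda_k| \leq \frac{1}{2}(\lambda_j^2+\lambda_k^2) \leq \frac{1}{2}|S|^2$, which yields the cleaner estimate
\begin{equation}
-\det(S) \leq \frac{1}{2}|S|^2 |\lambda_i|
\end{equation}
for each of the three eigenvalues. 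Since $|\lambda_i| \geq \lambda_i$ and the right-hand side only enters through an upper bound, this immediately gives the analogue of \eqref{gcon},
\begin{equation}
\partial_t \|S(\cdot,t)\|_{L^2}^2 \leq -\|S\|_{\dot{H}^1}^2 + 2\int_{\mathbb{R}^3} |\lambda_i| \, |S|^2 + \frac{1}{2}\|f\|_{L^2}^2,
\end{equation}
with $|\lambda_i|$ playing the role that $\lambda_2^+$ played before.

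From here the proof is \emph{verbatim} that of Theorem \ref{RegCrit}: I would apply H\"older's inequality with exponents $q$ and $a$ (where $\frac{1}{q}+\frac{1}{a}=1$), use the Sobolev embedding $\dot{H}^1 \hookrightarrow L^6$, interpolate $\|S\|_{L^{2a}}$ between $L^2$ and $L^6$ with the same exponent $\sigma = 1-\frac{3}{2q}$, and then absorb the dissipation via Young's inequality with the conjugate pair $p$ and $b=\frac{2q}{3}$. The scaling relation $\frac{2}{p}+\frac{3}{q}=2$ and the identities $p(2-\frac{3}{q})=2$, $b\frac{3}{q}=2$ are unchanged, so Gronwall's inequality produces exactly the claimed bound with $\|\lambda_i(\cdot,t)\|_{L^q}$ in place of $\|\lambda_2^+(\cdot,t)\|_{L^q}$. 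The statement then follows, using $\|\lambda_i\|_{L^q}^p \leq \||\lambda_i|\|_{L^q}^p$ (an equality, since $\||\lambda_i|\|_{L^q}=\|\lambda_i\|_{L^q}$), and the case $q=+\infty$ is handled by the same H\"older-$(1,\infty)$ step as before.

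I do not anticipate a serious obstacle here, since the corollary is structurally a direct generalization of the main theorem; the only genuinely new ingredient is the observation that $|\lambda_j\lambda_k| \leq \frac{1}{2}|S|^2$ holds regardless of \emph{which} eigenvalue is singled out, so that each $\lambda_i$ controls the determinant. The mild subtlety worth flagging is that for $i=1$ or $i=3$ one cannot replace $\lambda_i$ by its positive part: $\lambda_1 \leq 0$ and $\lambda_3 \geq 0$ always, so the signed criterion on $\lambda_2$ is the sharpest of the three, whereas the criteria on $\lambda_1$ and $\lambda_3$ necessarily involve the full magnitude $|\lambda_i|$. This is exactly why the middle eigenvalue gives the strongest geometric information and why it—rather than $\lambda_1$ or $\lambda_3$—is the natural object in the main theorem.
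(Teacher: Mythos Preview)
Your argument is correct, but it takes a longer route than the paper. The paper's proof is a one-liner: since $\lambda_1\leq\lambda_2\leq\lambda_3$ and $\lambda_1+\lambda_2+\lambda_3=0$, one checks directly that $|\lambda_2|\leq|\lambda_1|$ and $|\lambda_2|\leq|\lambda_3|$ pointwise (if $\lambda_2\geq 0$ then $|\lambda_1|=\lambda_2+\lambda_3\geq\lambda_2$ and $|\lambda_3|\geq\lambda_2$; if $\lambda_2<0$ then $|\lambda_1|\geq|\lambda_2|$ and $\lambda_3=|\lambda_1|+|\lambda_2|\geq|\lambda_2|$). Hence $\lambda_2^+\leq|\lambda_2|\leq|\lambda_i|$ for every $i$, so $\|\lambda_2^+\|_{L^q}\leq\|\lambda_i\|_{L^q}$, and one simply invokes Theorem~\ref{RegCrit} directly without touching the determinant bound or rerunning the Gr\"onwall argument. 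Your approach instead reproves a version of Lemma~\ref{EigenBound} for each $\lambda_i$ and then repeats the full interpolation/Young/Gr\"onwall machinery of Theorem~\ref{RegCrit}. This is valid, and your determinant estimate $-\det(S)\leq\tfrac{1}{2}|S|^2|\lambda_i|$ is a nice uniform statement, but it does more work than needed: once the pointwise dominance $\lambda_2^+\leq|\lambda_i|$ is noted, the corollary is an immediate monotonicity consequence of the main theorem rather than a parallel argument.
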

\begin{proof}
$\lambda_1 \leq \lambda_2 \leq \lambda_3$ and $\lambda_1+\lambda_2+\lambda_3=0$ implies that $|\lambda_1|,|\lambda_3|\geq |\lambda_2| \geq |\lambda_2^+|.$ Therefore 
\begin{equation}
\int_0^T\|\lambda_2^+(\cdot,t)\|_{L^q}^p dt \leq 
\int_0^T\|\lambda_i(\cdot,t)\|_{L^q}^p dt.
\end{equation}
Applying this inequality to both conclusions in Theorem \ref{RegCrit}, this completes the proof.
\end{proof}
We will also note that there is a gap to be closed in the regularity criterion on $\frac{\partial u_j}{\partial x_i},$ because it is not the optimal result with respect to scaling and requires subcritical control on $\frac{\partial u_j}{\partial x_i}.$ That is, the result only holds for 
$\frac{2}{p}+\frac{3}{q}=\frac{q+3}{2q}<2,$ for $i\neq j$ and 
$\frac{2}{p}+\frac{3}{q}=\frac{3q+6}{4q}<2,$ for $i=j,$
whereas the regularity criterion on one of the eigenvalues in Corollary \ref{RegCor} is critical with respect to the scaling. It is natural, however, to ask whether the main theorem in this paper can be extended to the critical Besov spaces, so in that sense the result may be pushed further.

Corollary \ref{RegCor} is only really a new result, however, for $\lambda_2$. This is because $|\lambda_1|$ and $|\lambda_3|$ both control $|S|$. As we will see from the following proposition, the regularity criteria in terms of $\lambda_1$ or $\lambda_3$ follow immediately from the Prodi-Serrin-Ladyzhenskaya regularity criterion without needing to use strain evolution equation at all, so in this case Corollary 5.3 is just an unstated corollary of previous results.

\begin{proposition}[Lower bounds on the magnitude of the extermal eigenvalues]
Suppose $M\in S^{3 \times 3}$ is a symmetric trace free matrix with eigenvalues $\lambda_1 \leq \lambda_2 \leq \lambda_3.$ Then 
\begin{equation}
\lambda_3 \geq \frac{1}{\sqrt{6}}|S|,
\end{equation}
with equality if and only if $-\frac{1}{2}\lambda_1=\lambda_2=\lambda_3,$
and
\begin{equation}
\lambda_1 \leq -\frac{1}{\sqrt{6}}|S|,
\end{equation}
with equality if and only if $\lambda_1=\lambda_2=-\frac{1}{2}\lambda_3.$

Furthermore, for all $S \in L^2_{st}$ and for all $1\leq q \leq +\infty$
\begin{equation}
\|S\|_{L^q}\leq \sqrt{6}\|\lambda_1\|_{L^q}
\end{equation}
and
\begin{equation}
\|S\|_{L^q}\leq \sqrt{6}\|\lambda_3\|_{L^q}.
\end{equation} 
\end{proposition}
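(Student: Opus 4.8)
The plan is to reduce the spectral inequality to a single-variable quadratic that factors, obtain the second bound by symmetry, and deduce the $L^q$ estimates by integrating the pointwise bound. First I would record the two facts that drive everything: since $M$ is symmetric, $|M|^2 = \lambda_1^2 + \lambda_2^2 + \lambda_3^2$, and since $\tr(M) = \lambda_1 + \lambda_2 + \lambda_3 = 0$ with $\lambda_1 \le \lambda_2 \le \lambda_3$, the extreme eigenvalues satisfy $\lambda_3 \ge 0 \ge \lambda_1$ (the maximum of three numbers summing to zero is nonnegative, the minimum nonpositive). The case $M = 0$ is trivial, so I assume $M \ne 0$, whence $\lambda_3 > 0$.

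For the bound $\lambda_3 \ge \tfrac{1}{\sqrt 6}|M|$, squaring shows it is equivalent to $5\lambda_3^2 \ge \lambda_1^2 + \lambda_2^2$. I would eliminate $\lambda_1 = -\lambda_3 - \lambda_2$ using the trace-free condition, so that $\lambda_1^2 + \lambda_2^2 = \lambda_3^2 + 2\lambda_3\lambda_2 + 2\lambda_2^2$, and the claim rearranges to $(\lambda_2 - \lambda_3)(\lambda_2 + 2\lambda_3) \le 0$. The ordering constraints $\lambda_1 \le \lambda_2 \le \lambda_3$ translate, via the same substitution, into $-\tfrac{1}{2}\lambda_3 \le \lambda_2 \le \lambda_3$; on this range the first factor is nonpositive while the second is at least $\tfrac{3}{2}\lambda_3 > 0$, so the product is nonpositive. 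Equality forces $\lambda_2 = \lambda_3$ (the other root $\lambda_2 = -2\lambda_3$ is excluded by $\lambda_2 \ge -\tfrac12\lambda_3$), and then $\lambda_1 = -2\lambda_3$, i.e. $-\tfrac12\lambda_1 = \lambda_2 = \lambda_3$, exactly the stated equality case.

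The bound on $\lambda_1$ follows by applying the first bound to $-M$: its ordered eigenvalues are $-\lambda_3 \le -\lambda_2 \le -\lambda_1$, so the largest is $-\lambda_1$, and $|-M| = |M|$ gives $-\lambda_1 \ge \tfrac{1}{\sqrt 6}|M|$, with equality iff $-\lambda_3 = -\lambda_2 = -\tfrac12(-\lambda_1)$, that is $\lambda_1 = \lambda_2 = -\tfrac12\lambda_3$. Finally, for the $L^q$ statements I would apply the two scalar bounds pointwise to $S(x)$: since $\lambda_3(x) \ge 0$ and $\lambda_1(x) \le 0$, they read $|S(x)| \le \sqrt 6\,\lambda_3(x) = \sqrt 6\,|\lambda_3(x)|$ and $|S(x)| \le -\sqrt 6\,\lambda_1(x) = \sqrt 6\,|\lambda_1(x)|$, and taking $L^q$ norms of these pointwise inequalities yields the two claimed estimates for every $1 \le q \le +\infty$.

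I expect no serious obstacle here; the only points demanding care are the bookkeeping of the ordering constraints after the substitution (to fix the correct interval for $\lambda_2$ and hence the sign of each factor) and the correct translation of the equality cases under $M \mapsto -M$.
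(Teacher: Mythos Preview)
Your proof is correct and follows essentially the same route as the paper: both substitute $\lambda_1 = -\lambda_2 - \lambda_3$ to reduce to a one-variable quadratic in $\lambda_2$, verify the sign on the admissible interval $[-\tfrac12\lambda_3,\lambda_3]$, and read off the equality case. The only cosmetic differences are that you factor the quadratic directly as $(\lambda_2-\lambda_3)(\lambda_2+2\lambda_3)\le 0$, whereas the paper bounds it via Young's inequality $2\lambda_2\lambda_3 \le \lambda_2^2+\lambda_3^2$ followed by $\lambda_2^2\le\lambda_3^2$, and you obtain the $\lambda_1$ bound cleanly by the symmetry $M\mapsto -M$ rather than repeating the computation; both shortcuts are fine and arguably tidier.
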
 \label{EigenStr}
\begin{proof}
We will prove the statement for $\lambda_3.$ The proof of the statement for $\lambda_1$ is entirely analogous and is left to the reader. 
First observe that if $-\frac{1}{2}\lambda_1=\lambda_2=\lambda_3,$ then
\begin{equation}
|S|^2=\lambda_1^2+\lambda_2^2+\lambda_3^2=6\lambda_3^2,
\end{equation}
So we have proven that if $\lambda_2=\lambda_3,$ 
then $\lambda_3=\frac{1}{\sqrt{6}}|S|.$
Now suppose $\lambda_2<\lambda_3.$
Recall that
\begin{equation}
\tr(M)=\lambda_1+\lambda_2+\lambda_3=0,
\end{equation}
so \begin{equation}
\lambda_1=-\lambda_2-\lambda_3.
\end{equation}
Therefore we find that
\begin{align}
|S|^2&=\lambda_1^2+\lambda_2^2+\lambda_3^2\\
&=(-\lambda_2-\lambda_3)^2+\lambda_2^2+\lambda_3^2\\
&=2\lambda_2^2+2\lambda_3^2+2\lambda_2 \lambda_3\\
&\leq 3\lambda_2^2+3\lambda_3\\
&< 6\lambda_3^2,
\end{align}
where we have applied Young's inequality and used the fact that $\lambda_2<\lambda_3.$
Noting that $\lambda_3 \geq 0$, this completes the proof. We leave the analogous proof for $\lambda_1$ to the reader. The $L^q$ bounds follow immediately from integrating these bounds pointwise when one recalls that $\tr(S)=0.$ We will note here that the $L^q$ norms may be infinite, as by hypothesis we only have $S\in L^2,$ but by convention the inequality is satisfied if both norms are infinite.
\end{proof}

In particular this implies that regularity criteria involving $\lambda_1$ or $\lambda_3$ follow immediately from regularity criteria involving $S,$ so while the regularity criteria on $\lambda_1$ and $\lambda_3$ in Corollary \ref{RegCor} do not appear in the literature to the knowledge of the author, these criteria do not offer a real advance over the Prodi-Serrin-Ladyzhenskaya criterion \cite{Prodi,Serrin,Ladyzhenskaya}, as the critical norm on $u$ can be controlled by the critical norm on $S$ using Sobolev embedding, which can in turn be bounded by the critical norm on $\lambda_1$ or $\lambda_3$ using Proposition \ref{EigenStr}. That is
\begin{equation}
\|u\|_{L^{q^*}}\leq C \|S\|_{L^q}\leq \sqrt{6}C\|\lambda_3\|_{L^q}.
\end{equation}
It is the regularity criterion in terms of $\lambda_2^+$ that is really significant, because it encodes geometric information about the strain beyond just its size.

We will also note that none of the regularity criteria involving $\nabla u_j$ \cite{ZhouOne}, $\partial_{x_i}u$ \cite{Kukavica}, or $\partial_{x_i}u_j$ \cite{CaoTiti}, have been proven for the Navier-Stokes equation with an external force. However, the regularity criterion in Theorem \ref{RegCrit} is also valid for Navier-Stokes equation with an external force. It may only be an exercise to extend the results cited above to the case with an external force, but because these papaers do not establish their regularity criteria by applying Gr\"onwall type estimates to the enstrophy, it is not immediately clear that this is is the case.

\begin{lemma}[The middle eigenvector is minimal] \label{Spectral}
Suppose $S \in L_{st}^2$ and $v \in L^\infty(\mathbb{R}^3;\mathbb{R}^3)$ with $|v(x)|=1$ almost everywhere $x \in \mathbb{R}^3.$ Then
\begin{equation}
|\lambda_2(x)|\leq \left |S(x)v(x)\right |
\end{equation}
almost everywhere $x\in \mathbb{R}^3.$
\end{lemma}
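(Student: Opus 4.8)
The plan is to work pointwise, fixing a point $x \in \mathbb{R}^3$ at which $S(x)$ is symmetric and trace-free (which holds almost everywhere since $S \in L^2_{st}$, by Proposition \ref{StrainSpace}) and $|v(x)| = 1$, and to suppress the argument $x$. Since $S$ is symmetric, I would diagonalize it in an orthonormal eigenbasis $e_1, e_2, e_3$ with $S e_i = \lambda_i e_i$, write $v = \sum_{i=1}^3 c_i e_i$ with $\sum_{i=1}^3 c_i^2 = 1$, and compute $|Sv|^2 = \sum_{i=1}^3 c_i^2 \lambda_i^2$. The target inequality $\lambda_2^2 \le |Sv|^2$ then reads $\sum_{i=1}^3 c_i^2 \lambda_i^2 \ge \lambda_2^2 \sum_{i=1}^3 c_i^2$, so it suffices to show that $\lambda_2^2 \le \lambda_i^2$ for each $i$, i.e.\ that the middle eigenvalue has the smallest magnitude of the three.

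The heart of the matter is therefore the elementary claim that for a trace-free triple $\lambda_1 \le \lambda_2 \le \lambda_3$ with $\lambda_1 + \lambda_2 + \lambda_3 = 0$ one has $\lambda_2^2 \le \min\{\lambda_1^2, \lambda_3^2\}$. I would prove this by a short case split on the sign of $\lambda_2$. If $\lambda_2 \ge 0$, then $\lambda_3 \ge \lambda_2 \ge 0$ gives $\lambda_3^2 \ge \lambda_2^2$, while $\lambda_1 = -(\lambda_2+\lambda_3) \le -\lambda_2$ gives $|\lambda_1| \ge \lambda_2 = |\lambda_2|$, hence $\lambda_1^2 \ge \lambda_2^2$. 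If $\lambda_2 \le 0$, the symmetric argument applies: $\lambda_1 \le \lambda_2 \le 0$ yields $\lambda_1^2 \ge \lambda_2^2$, and $\lambda_3 = -(\lambda_1+\lambda_2) \ge -\lambda_2 = |\lambda_2|$ yields $\lambda_3^2 \ge \lambda_2^2$. This is precisely the step that uses trace-freeness in an essential way: for a general symmetric matrix the middle eigenvalue need not be smallest in magnitude (e.g.\ $\mathrm{diag}(1,2,3)$ has middle eigenvalue $2$ but $|Se_1| = 1$), so the lemma is genuinely false without $\tr(S) = 0$, and isolating this case analysis is what I expect to be the one substantive point.

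With the claim in hand the conclusion is immediate: $|Sv|^2 = \sum_{i=1}^3 c_i^2 \lambda_i^2 \ge \lambda_2^2 \sum_{i=1}^3 c_i^2 = \lambda_2^2$, so $|\lambda_2| \le |Sv|$ at the chosen point. The only remaining task is to upgrade this pointwise inequality to an almost-everywhere statement, which I expect to be routine: $\lambda_2(x)$ is a measurable function of $x$ (for instance through a min-max characterization, or as a continuous symmetric function of the entries of $S(x)$), and $|S(x)v(x)|$ is measurable as a composition of measurable maps, so the inequality, valid at every point where $S(x)$ is symmetric and trace-free and $|v(x)| = 1$, holds almost everywhere. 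Everything beyond the sign-based case split is bookkeeping.
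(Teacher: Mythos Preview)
Your proof is correct and follows essentially the same route as the paper: diagonalize $S(x)$ in an orthonormal eigenbasis, expand $|Sv|^2$ as a convex combination of the $\lambda_i^2$, and use $|\lambda_2| \le \min\{|\lambda_1|,|\lambda_3|\}$. The paper simply asserts this last inequality without justification, whereas you supply the short case split on the sign of $\lambda_2$ that proves it from trace-freeness; your version is therefore slightly more complete.
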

\begin{proof}
By the spectral theorem, we know that there is an orthonormal eigenbasis for $\mathbb{R}^n.$ In particular, take $v_1(x),v_2(x),v_3(x)$ to be eigenvectors of $S(x)$ corresponding to eigenvalues $\lambda_1(x),\lambda_2(x),\lambda_3(x)$ such that 
$|v_1(x)|,|v_2(x)|,|v_3(x)|=1$ almost everywhere $x\in \mathbb{R}^3.$ Then from the spectral theorem we know that $\left \{ v_1(x),v_2(x),v_3(x) \right \}$ is an orthonormal basis for $\mathbb{R}^3$ almost everywhere $x \in \mathbb{R}^3.$
Therefore
\begin{equation}
Sv=\lambda_1 (v \cdot v_1)v_1+ \lambda_2(v \cdot v_2)v_2+ 
\lambda_3 (v \cdot v_3) v_3.
\end{equation}
Therefore we can see that
\begin{equation}
|Sv|^2=\lambda_1^2 (v \cdot v_1)^2+ \lambda_2^2(v \cdot v_2)^2+ 
\lambda_3^2 (v \cdot v_3)^2.
\end{equation}
We know that $|\lambda_2|\leq |\lambda_1|,|\lambda_3|,$ so
\begin{equation}
|Sv|^2 \geq \lambda_2^2 \left ((v \cdot v_1)^2+ (v \cdot v_2)^2
+ (v \cdot v_3)^2 \right ).
\end{equation}
Because $\left \{ v_1(x),v_2(x),v_3(x) \right \}$ is an orthonormal basis for $\mathbb{R}^3$ almost everywhere $x \in \mathbb{R}^3,$ we conclude that
\begin{equation}
(v \cdot v_1)^2+ (v \cdot v_2)^2+ (v \cdot v_3)^2=|v|^2=1.
\end{equation}
Therefore
\begin{equation}
|Sv|^2 \geq \lambda_2^2.
\end{equation}
This concludes the proof.
\end{proof}

Now that we have proven Lemma \ref{Spectral}, we will prove a new regularity criterion for the strain tensor. This regularity criterion is Theorem \ref{IntroStrain} in the introduction, and is restated here for the reader's convenience.

\begin{theorem}[Blowup requires the strain to blow up in every direction] \label{StrainAllDirections}
Let $u \in C\left([0,T];\dot{H}^1\left (\mathbb{R}^3\right ) \right )
\cap L^2\left ([0,T];\dot{H}^2\left (\mathbb{R}^3\right )\right ),$ for all $T<T_{max}$ be a mild solution to the Navier-Stokes equation with force 
$f\in L^2_{loc}\left ((0,T^*);L^2\left (\mathbb{R}^3\right)\right )$, and let 
$v \in L^\infty\left (\mathbb{R}^3 \times [0,T_{max}];\mathbb{R}^3\right ),$ 
with $|v(x,t)|=1$ almost everywhere.
If $\frac{2}{p}+\frac{3}{q}=2,$ with $\frac{3}{2}<q \leq + \infty,$ then
\begin{equation}
\|u(\cdot,T)\|_{\dot{H}^1}^2 \leq 
\left ( \left\|u^0\right\|_{\dot{H}^1}^2+\int_0^T \|f(\cdot,t)\|_{L^2}^2 dt \right )
\exp \left (C_q \int_{0}^T\|S(\cdot,t)v(\cdot,t)\|_{L^q\left (\mathbb{R}^3\right )}^p dt \right ),
\end{equation}
with the constant $C_q$ depending only on $q.$
In particular if the maximal existence time for a mild solution $T_{max}<T^*,$ then 
\begin{equation}
\int_{0}^{T_{max}}\|S(\cdot,t)v(\cdot,t)\|_{L^q(\mathbb{R}^3)}^p dt= + \infty.
\end{equation}
\end{theorem}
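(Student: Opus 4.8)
The plan is to derive Theorem \ref{StrainAllDirections} as an immediate consequence of the middle-eigenvalue regularity criterion in Theorem \ref{RegCrit}, combined with the pointwise spectral bound in Lemma \ref{Spectral}. The key observation is that for any unit vector field $v$, the quantity $\|S(\cdot,t)v(\cdot,t)\|_{L^q}$ dominates $\|\lambda_2^+(\cdot,t)\|_{L^q}$ slicewise in time, so the Gr\"onwall-type exponential bound already established in Theorem \ref{RegCrit} transfers directly without any new analysis of the strain evolution equation.

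First I would fix a time $t$ and apply Lemma \ref{Spectral} to the strain matrix $S(\cdot,t)$ and the unit vector field $v(\cdot,t)$, obtaining $|\lambda_2(x,t)| \leq |S(x,t)v(x,t)|$ for almost every $x \in \mathbb{R}^3$. Since $\lambda_2^+ = \max\{\lambda_2,0\} \leq |\lambda_2|$ pointwise, this immediately gives $\lambda_2^+(x,t) \leq |S(x,t)v(x,t)|$ almost everywhere. Taking $L^q(\mathbb{R}^3)$ norms and using monotonicity of the norm under pointwise domination of nonnegative functions yields $\|\lambda_2^+(\cdot,t)\|_{L^q} \leq \|S(\cdot,t)v(\cdot,t)\|_{L^q}$ for almost every $t$.

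Next I would raise both sides to the power $p$ (legitimate since both sides are nonnegative) and integrate in time over $[0,T]$, giving $\int_0^T \|\lambda_2^+(\cdot,t)\|_{L^q}^p\, dt \leq \int_0^T \|S(\cdot,t)v(\cdot,t)\|_{L^q}^p\, dt$. Because the exponential is monotone increasing, substituting this inequality into the conclusion of Theorem \ref{RegCrit} produces the stated energy bound with the same constant $C_q$. The blow-up statement then follows by contraposition: if $\int_0^{T_{max}} \|S v\|_{L^q}^p\, dt$ were finite, the right-hand side of the energy bound would remain bounded as $T \to T_{max}$, contradicting the fact that $\|u(\cdot,t)\|_{\dot{H}^1}$ must become unbounded at a finite maximal existence time $T_{max}<T^*$.

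I do not expect any serious obstacle here, since all the substantive work is already contained in Lemma \ref{Spectral} and Theorem \ref{RegCrit}. The only point requiring a moment of care is measurability: Lemma \ref{Spectral} is stated for a fixed-time strain matrix, whereas here $v$ depends on both space and time. Since $v \in L^\infty(\mathbb{R}^3 \times [0,T_{max}];\mathbb{R}^3)$ with $|v(x,t)|=1$ almost everywhere, the pointwise bound holds for almost every $(x,t)$, and hence the slicewise $L^q$ estimate holds for almost every $t$, which is precisely what is needed to integrate in time and apply Theorem \ref{RegCrit}.
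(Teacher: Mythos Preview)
Your proposal is correct and follows exactly the paper's approach: the paper's proof consists of the single sentence ``This follows immediately from Lemma \ref{Spectral} and Theorem \ref{RegCrit},'' and you have simply spelled out the obvious details of that deduction.
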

\begin{proof}
This follows immediately from Lemma \ref{Spectral} and Theorem \ref{RegCrit}.
\end{proof}

We can use Theorem \ref{StrainAllDirections} to prove a new one-direction-type regularity criterion involving the sum of the derivative of the whole velocity in one direction, and the gradient of the component in the same direction. In fact, Theorem \ref{StrainAllDirections} allows us to prove a one direction regularity criterion that involves different directions in different regions of $\mathbb{R}^3.$ First off, for any unit vector 
$v\in \mathbb{R}^3, |v|=1$ we define $\partial_v=v\cdot \nabla$ and 
$u_v=u \cdot v.$
We will now prove Theorem \ref{OneDir}, which is restated here for the reader's convenience.

\begin{corollary}[Local one direction regularity criterion] \label{BlowupOneDirection}
Let $\left \{v_n(t) \right \}_{n\in \mathbb{N}}\subset \mathbb{R}^3$ with 
$|v_n(t)|=1.$ Let $\left \{\Omega_n(t) \right \}_{n\in \mathbb{N}}\subset \mathbb{R}^3$ be Lebesgue measurable sets such that for all $m \neq n,$
$\Omega_m(t) \cap \Omega_n(t)= \emptyset,$ and 
$\mathbb{R}^3= \bigcup_{n\in \mathbb{N}} \Omega_n(t).$
Let $u \in C\left([0,T];\dot{H}^1\left (\mathbb{R}^3\right ) \right )
\cap L^2\left ([0,T];\dot{H}^2\left (\mathbb{R}^3\right )\right ),$ for all $T<T_{max}$ be a mild solution to the Navier-Stokes equation with force 
$f\in L^2_{loc}\left ((0,T^*);L^2\left (\mathbb{R}^3\right)\right ).$
If $\frac{2}{p}+\frac{3}{q}=2,$ with $\frac{3}{2}<q \leq + \infty,$ then
\begin{equation}
\|u(\cdot,T)\|_{\dot{H}^1}^2 \leq 
\left ( \left\|u^0\right\|_{\dot{H}^1}^2 +\int_0^T \|f(\cdot,t)\|_{L^2}^2 dt \right)
\exp \left (C_q \int_{0}^T \left (\sum_{n=1}^\infty \bigl \|
\partial_{v_n}u(\cdot,t)+ \nabla u_{v_n}(\cdot,t)
\bigr \|_{L^q(\Omega_n(t))}^q \right )^\frac{p}{q} dt \right ),
\end{equation}
with the constant $C_q$ depending only on $q.$
In particular, if the maximal existence time for a mild solution $T_{max}<T^*,$ then 
\begin{equation} \label{alldirections}
\int_{0}^{T_{max}}\left (\sum_{n=1}^\infty \bigl\|
\partial_{v_n} u(\cdot,t)+ \nabla u_{v_n}(\cdot,t)
\bigr\|_{L^q(\Omega_n(t))}^q \right )^\frac{p}{q} dt= + \infty.
\end{equation}
In particular if we take $v_n(t)=\left (\begin{matrix} 0 \\ 0 \\ 1 \end{matrix} \right )$ for all $n\in \mathbb{N},$ then \eqref{alldirections} reduces to
\begin{equation}
    \int_0^{T_{max}}\|\partial_3u(\cdot,t)+\nabla u_3(\cdot,t)\|_{L^q}^p dt=+\infty.
\end{equation}
\end{corollary}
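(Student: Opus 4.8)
The plan is to deduce this one-direction criterion from Theorem \ref{StrainAllDirections} by gluing the family $\{v_n(t)\}$ into a single measurable unit vector field and then recognizing the one-direction quantity as exactly $2\,S v$. First I would define, for each fixed $t$, the field $v(x,t)=v_n(t)$ whenever $x\in\Omega_n(t)$. Because the $\Omega_n(t)$ are pairwise disjoint measurable sets covering $\mathbb{R}^3$ and each $v_n(t)$ is a unit vector, $v(\cdot,t)$ is a well-defined measurable field with $|v(x,t)|=1$ almost everywhere; verifying that the resulting $v$ lies in $L^\infty\left(\mathbb{R}^3\times[0,T_{max}];\mathbb{R}^3\right)$ is the bookkeeping that confirms the hypotheses of Theorem \ref{StrainAllDirections} are met.

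The crux is a pointwise identity. On each piece $\Omega_n(t)$ the field $v$ is constant in $x$ (it depends only on $t$), so differentiating $u_{v_n}=u\cdot v_n$ and $\partial_{v_n}u=(v_n\cdot\nabla)u$ component-wise gives
\[
(\nabla u_{v_n})_i+(\partial_{v_n}u)_i
=\sum_{j=1}^3 (v_n)_j\left(\frac{\partial u_j}{\partial x_i}+\frac{\partial u_i}{\partial x_j}\right)
=2\,(Sv_n)_i,
\]
so that $Sv=\tfrac12\left(\partial_{v_n}u+\nabla u_{v_n}\right)$ on $\Omega_n(t)$. The step I would be most careful about is precisely this one: the identity uses that $v$ is spatially constant on each piece, since a genuinely $x$-dependent $v$ would contribute an extra term $\sum_j u_j\,\partial_{x_i}v_j$. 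Here $v$ varies only in $t$, so that term vanishes, and this is the only place the piecewise structure actually enters. There is no deeper obstacle, as the result is a corollary; the only care needed is the measurability of the glued field and this local-constancy point.

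Finally I would reassemble the $L^q$ norm across the partition. Since $|Sv_n|^q=2^{-q}\left|\partial_{v_n}u+\nabla u_{v_n}\right|^q$ on $\Omega_n(t)$,
\[
\|S(\cdot,t)\,v(\cdot,t)\|_{L^q(\mathbb{R}^3)}^q
=\sum_{n=1}^\infty \int_{\Omega_n(t)} |Sv_n|^q
=\frac{1}{2^q}\sum_{n=1}^\infty
\bigl\|\partial_{v_n}u+\nabla u_{v_n}\bigr\|_{L^q(\Omega_n(t))}^q,
\]
whence $\|Sv\|_{L^q(\mathbb{R}^3)}^p=2^{-p}\bigl(\sum_{n=1}^\infty\|\partial_{v_n}u+\nabla u_{v_n}\|_{L^q(\Omega_n(t))}^q\bigr)^{p/q}$. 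Substituting this into the conclusion of Theorem \ref{StrainAllDirections} and absorbing the harmless constant $2^{-p}$ into $C_q$ gives the stated exponential bound on $\|u(\cdot,T)\|_{\dot{H}^1}^2$; combined with the fact that $\|u(\cdot,t)\|_{\dot{H}^1}\to\infty$ as $t\to T_{max}$ when $T_{max}<T^*$, this forces the integral to diverge. The displayed special case then follows by taking a single region $\Omega_1(t)=\mathbb{R}^3$ with $v_1(t)$ the third coordinate direction, which collapses the sum to the global $L^q$ norm of $\partial_3 u+\nabla u_3$.
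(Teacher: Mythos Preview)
Your proof is correct and follows essentially the same route as the paper: define the glued unit field $v(x,t)=v_n(t)$ on $\Omega_n(t)$, use the pointwise identity $Sv_n=\tfrac{1}{2}\bigl(\partial_{v_n}u+\nabla u_{v_n}\bigr)$, split the $L^q$ norm over the disjoint partition, and invoke Theorem~\ref{StrainAllDirections}. The paper's proof is slightly terser and does not dwell on the local-constancy issue you flag, but the argument is the same.
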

\begin{proof}
Let $v(x,t)=\sum_{n=1}^\infty v_n(t) I_{\Omega_n(t)}(x),$ where
$I_\Omega$ is the indicator function $I_\Omega(x)=1$ for all $x \in \Omega$ and 
$I_\Omega(x)=0$ otherwise.
Note that in this case we clearly have
\begin{equation}
S(x,t)v(x,t)=\sum_{n=1}^\infty I_{\Omega_n(t)}(x)S(x,t)v_n(t).
\end{equation}
Because $\left \{\Omega_n \right \}_{n \in \mathbb{N}}$ are disjoint, we have
\begin{equation}
\|S(\cdot,t)v(\cdot,t)\|_{L^q\left (\mathbb{R}^3 \right )}^q=
\sum_{n=1}^\infty \|S(\cdot,t)v_n(t)\|_{L^q(\Omega_n(t))}^q.
\end{equation}
Therefore we find that
\begin{equation}
\|S(\cdot,t)v(\cdot,t)\|_{L^q\left (\mathbb{R}^3 \right )}^p=
\left ( \sum_{n=1}^\infty \|S(\cdot,t)
v_n(t)\|_{L^q(\Omega_n(t))}^q \right )^\frac{p}{q}.
\end{equation}
Finally observe that
\begin{equation}
S(x,t)v_n(t)=\frac{1}{2}\partial_{v_n}u(x,t)
+\frac{1}{2}\nabla u_{v_n}(x,t),
\end{equation}
so
\begin{equation}
\|S(\cdot,t)v(\cdot,t)\|_{L^q\left (\mathbb{R}^3 \right )}^p=
\left ( \sum_{n=1}^\infty \left \|
\frac{1}{2}\partial_{v_n}u(\cdot,t)+ \frac{1}{2}\nabla u_{v_n}(\cdot,t)
\right \|_{L^q(\Omega_n(t))}^q \right )^\frac{p}{q}.
\end{equation}
Applying Theorem \ref{StrainAllDirections}, this completes the proof.
\end{proof}

Corollary \ref{BlowupOneDirection} significantly extends the range of exponents for which we have a critical regularity criterion involving one direction. For instance, Kukavica and Ziane \cite{Kukavica} showed that if $T_{max}<+\infty,$ and if 
$\frac{2}{p}+\frac{3}{q}=2,$ with $\frac{9}{4} \leq q \leq 3,$ then
\begin{equation}
\int_{0}^{T_{max}}\|\partial_3 u(\cdot,t)\|_{L^q(\mathbb{R}^3)}^p dt= + \infty.
\end{equation}
More recently, it was shown by Chemin, Zhang, and Zhang \cite{Chemin2,Chemin3} that if $T_{max}<+\infty$ and \newline $4<p<+\infty,$ then
\begin{equation}
\int_0^{T_{max}}\|u_3(\cdot,t)\|_{\dot{H}^{\frac{1}{2}+\frac{2}{p}}}^p=+\infty.
\end{equation}
In the case where there is no external force, $f=0,$ these results imply the special case of Corollary \ref{BlowupOneDirection}, that if $T_{max}<+\infty$ then \begin{equation}\int_0^{T_{max}}\|\partial_3 u(\cdot,t)+\nabla u_3(\cdot,t)\|_{L^q}^p=+\infty,
\end{equation}
in the range of exponents $\frac{9}{4}\leq q \leq 3$ and $\frac{3}{2}<q<6$ respectively. This follows from the Helmholtz decomposition, as we will now show.

\begin{proposition}[Helmholtz decomposition] \label{Hodge}
Suppose $1<q<+\infty.$ For all $v\in 
L^q(\mathbb{R}^3;\mathbb{R}^3)$ there exists a unique $u\in L^q(\mathbb{R}^3;\mathbb{R}^3),$ $\nabla \cdot u=0$ and $\nabla f \in L^q(\mathbb{R}^3;\mathbb{R}^3)$ such that $v=u+\nabla f.$ Note because we do not have any assumptions of higher regularity, we will say that $\nabla \cdot u=0,$ if for all $\phi \in C_c^\infty(\mathbb{R}^3)$
\begin{equation}
    \int_{\mathbb{R}^3}u \cdot \nabla \phi=0,
\end{equation}
and we will say that $\nabla f$ is a gradient if for all $w\in C_c^\infty(\mathbb{R}^3;\mathbb{R}^3), \nabla \cdot w=0,$ we have
\begin{equation}
    \int_{\mathbb{R}^3}\nabla f \cdot w=0.
\end{equation}
Furthermore there exists $B_q\geq 1$ depending only on $q,$ such that 
\begin{equation}
    \|u\|_{L^q}\leq B_q \|v\|_{L^q},
\end{equation}
and
\begin{equation}
    \|\nabla f\|_{L^q} \leq B_q \|v\|_{L^q}.
\end{equation}
\end{proposition}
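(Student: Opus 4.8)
The plan is to construct the decomposition explicitly from the Leray--Helmholtz projection built out of the Riesz transforms, reading off the structural conditions and the $L^q$ bounds from Calder\'on--Zygmund theory. Write $R_j$ for the $j$-th Riesz transform, the Fourier multiplier with symbol $-i\xi_j/|\xi|$, equivalently $R_j=\partial_{x_j}(-\Delta)^{-1/2}$, and define the operator $\mathbb{P}$ on vector fields by
\begin{equation}
(\mathbb{P}v)_j=v_j+\sum_{k=1}^3 R_jR_k v_k,
\end{equation}
whose matrix Fourier symbol is $\delta_{jk}-\xi_j\xi_k/|\xi|^2$. I then set $u:=\mathbb{P}v$ and $\nabla f:=v-u$, so that $(\nabla f)_j=-\sum_{k=1}^3 R_jR_k v_k$ has symbol $\xi_j\xi_k/|\xi|^2$, i.e.\ is parallel to $\xi$ in Fourier space and hence formally a gradient. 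Since each $R_j$ is a Calder\'on--Zygmund operator, it is bounded on $L^q(\mathbb{R}^3)$ for every $1<q<+\infty$; composing at most two of them and summing over the nine index pairs produces a constant $B_q\ge 1$ depending only on $q$ with $\|u\|_{L^q}\le B_q\|v\|_{L^q}$ and $\|\nabla f\|_{L^q}\le B_q\|v\|_{L^q}$, which settles the quantitative part at once.

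Next I would verify the two weak structural conditions, and since $v$ lies only in $L^q$ (so Plancherel is unavailable) this is where the real care is needed. I argue by density: pick Schwartz fields $v^{(n)}\to v$ in $L^q$, for which every Fourier computation and integration by parts is classical. On the Schwartz level $\nabla\cdot\mathbb{P}v^{(n)}=0$ pointwise, so for $\phi\in C_c^\infty(\mathbb{R}^3)$ one has $\int u^{(n)}\cdot\nabla\phi=0$; writing $q'$ for the conjugate exponent and using $u^{(n)}=\mathbb{P}v^{(n)}\to u$ in $L^q$ together with $\nabla\phi\in L^{q'}$, the estimate $|\int(u^{(n)}-u)\cdot\nabla\phi|\le\|u^{(n)}-u\|_{L^q}\|\nabla\phi\|_{L^{q'}}$ lets me pass to the limit and conclude $\int u\cdot\nabla\phi=0$. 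For the gradient condition, if $w\in C_c^\infty(\mathbb{R}^3;\mathbb{R}^3)$ is divergence free then $\hat w(\xi)\perp\xi$, so $\widehat{\nabla f^{(n)}}(\xi)$, being parallel to $\xi$, satisfies $\int\nabla f^{(n)}\cdot w=0$; the same $L^q$--$L^{q'}$ limiting argument yields $\int\nabla f\cdot w=0$.

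Finally I would establish uniqueness. Suppose $u_1+\nabla f_1=u_2+\nabla f_2$ and set $w:=u_1-u_2=\nabla f_2-\nabla f_1\in L^q$, so $w$ is simultaneously weakly divergence free and orthogonal to every divergence-free test field. Since curls of test fields are themselves divergence-free test fields, the orthogonality gives $\int w\cdot(\nabla\times\Phi)=0$ for all $\Phi\in C_c^\infty$, that is $\nabla\times w=0$ distributionally, so $w=\nabla g$ for a tempered distribution $g$; the divergence-free condition then forces $\Delta g=0$, whence $g$ is harmonic and each component of $w=\nabla g$ is a harmonic function in $L^q(\mathbb{R}^3)$. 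Averaging such a component over balls $B_R$ and letting $R\to\infty$, the mean value property together with $|B_R|\sim R^3$ and H\"older's inequality forces it to vanish, so $w=0$ and the decomposition is unique. I expect the main obstacle to be not any single computation but the passage from the transparent Fourier-side identities to the weak formulations valid for merely $L^q$ data; this is precisely what the $L^q$-boundedness of the Riesz transforms and the density of Schwartz functions are there to supply, after which the bounds and uniqueness follow essentially automatically.
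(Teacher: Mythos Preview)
Your proposal is correct and follows precisely the route the paper indicates: the paper does not actually prove the proposition but cites it as classical, remarking only that the $L^q$ bounds reduce to the $L^q$ boundedness of the Riesz transforms and recording the formulas $P_{df}(v)=R\times(R\times v)$, $P_g(v)=-R(R\cdot v)$. Your construction via $\mathbb{P}$ with symbol $\delta_{jk}-\xi_j\xi_k/|\xi|^2$ is exactly this, and you supply the details the paper omits---the density argument for the weak structural conditions and the Liouville-type uniqueness argument---so the two are in full agreement.
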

\begin{proof}
This is a well-known, classical result. For details, see for instance \cite{NS21}. We will also note here that the $L^q$ bounds here are equivalent to the $L^q$ boundedness of the Riesz transform. Take the Riesz transform to be given by $R=\nabla (-\Delta)^{-\frac{1}{2}},$ then 
$P_{df}(v)=R\times (R \times v),$ and
$P_{g}(v)=-R (R \cdot v).$
\end{proof}
Based on Proposition \ref{Hodge}, we will now define the projections onto the space of gradients and the space of divergence free vector fields.
\begin{definition}
    Fix $1<q<+\infty.$ Define $P_{df}:L^q(\mathbb{R}^3;\mathbb{R}^3) \to
    L^q(\mathbb{R}^3;\mathbb{R}^3)$ and
    $P_{g}:L^q(\mathbb{R}^3;\mathbb{R}^3) \to
    L^q(\mathbb{R}^3;\mathbb{R}^3)$ by 
    $P_{df}(v)=u$ and $P_{g}(v)=\nabla f,$ where $v, u,$ and $\nabla f$ are taken as above in Proposition \ref{Hodge}.
\end{definition}
Now we observe that these projections allow us to control $\|\partial_3 u\|_{L^q}$ and $\|u_3\|_{\dot{H}^{\frac{1}{2}+\frac{p}{2}}}$ by 
$\|\partial_3 u+\nabla u_3\|_{L^q}.$
In particular, we find
\begin{equation}
\|\partial_3 u\|_{L^q}=\|P_{df}\left (
\partial_3 u+\nabla u_3\right)\|_{L^q}
\leq B_q\|\partial_3 u +\nabla u_3\|_{L^q}.
\end{equation}
Applying the the Sobolev embedding $\dot{H}^{\frac{1}{2}+\frac{p}{2}}\left (\mathbb{R}^3 \right ) \subset W^{1,q}\left (\mathbb{R}^3 \right )$
when $\frac{2}{p}+\frac{3}{q}=2,$ and the $L^q$ boundedness of $P_g,$ we can also see that
\begin{equation}
\|u_3\|_{\dot{H}^{\frac{1}{2}+\frac{2}{p}}}\leq D\|\nabla u_3\|_{L^q} \leq
D B_q\|\partial_3 u+\nabla u_3\|_{L^q}.
\end{equation}
This means that the regularity criterion requiring $\partial_3 u+\nabla u_3 \in L_t^p L_x^q$ is not new in the range $\frac{3}{2} < q \leq 6.$ It does, however, extend the range of exponents for which a scale critical, one direction type regularity criterion is known to $6\leq q \leq +\infty.$

More importantly, Corollary \ref{StrainAllDirections} and Corollary \ref{BlowupOneDirection} do not require regularity in a fixed direction, but allow this direction to vary. One interpretation of component reduction results for Navier-Stokes regularity criteria, is that if the solution is approximately two dimensional, then it must be smooth. The only reason that we have component reduction regularity criteria for the 3D Navier-Stokes equation, is because the 2D Navier-Stokes equation has smooth solutions globally in time. All of the previous component reduction regularity criteria involve some fixed direction, and so can be interpreted as saying if a solution is globally approximately two dimensional, then it must be smooth. Corollary \ref{StrainAllDirections} and Corollary \ref{BlowupOneDirection} strengthen these statements to the requirement that the solution must be regular even if it is only locally two dimensional. This shows the deep geometric significance of the Theorem \ref{RegCrit}, that $\lambda_2^+$ controls the growth of enstrophy.

\section{Blowup for a toy model ODE}

Now that we have outlined the main advances in terms of regularity criteria that are made possible by utilizing strain equation, we will consider a toy model ODE.
The main advantage of the strain equation formulation of the Navier-Stokes equation compared with the vorticity formulation is that the quadratic term $S^2+\frac{1}{4}w \otimes w$ has a much nicer structure then the quadratic term $S \omega$ in 
the vorticity formulation.  The price we pay for this is that there are additional terms, particularly $\Hess(p),$ which are not present in the vorticity formulation.  There is also the related difficulty that the consistency condition in the strain formulation is significantly more complicated than in the vorticity formulation.

We will now examine a toy model ODE, prove the existence and stability of blowup, and examine asymptotic behavior near blowup. The simplest toy model equation would be to keep only the local part of the quadratic term (vorticity depends non-locally on $S$), and to study the ODE
$\partial_t M+M^2=0$.  As long as the initial condition $M(0)$ is an invertible matrix, this has the solution $\left (M(t) \right )^{-1}= \left(M(0) \right )^{-1}+ tI_3$.  This equation will blow up in finite time assuming that $M(0)$ has at least one negative eigenvalue.  Blowup is unstable in general, because any small perturbation into the complex plane will mean there will not be blowup.  However, if we restrict to symmetric matrices, then blowup is stable, because then the eigenvalues must be real valued, so a small perturbation will remain on the negative real axis.  The negative real axis is an open set of $\mathbb{R}$, but not of $\mathbb{C}$, so blowup is stable only when we are restricted to matrices with real eigenvalues, which is the case we are concerned with as the strain tensor is symmetric.  This equation does not preserve the family of trace free matrices however, because $tr(M^2)=|M|^2\neq0$, and therefore doesn't really capture any of the features of the strain equation \eqref{NavierStrain}.  We will instead take our toy model ODE on the space of symmetric, trace free matrices to be
\begin{equation} \label{ToyModel}
\partial_t M+M^2-\frac{1}{3}|M|^2 I_3=0.
\end{equation}

Because every symmetric matrix is diagonalizable over $\mathbb{R}$, and every diagonalizable matrix is mutually diagonalizable with the identity matrix, this equation can be treated as a system of ODEs for the evolution of the eigenvalues 
$\lambda_1 \leq \lambda_2 \leq \lambda_3,$ with for every $1 \leq i \leq 3,$
\begin{equation} \label{EigenvalueODE}
\partial_t \lambda_i = -\lambda_i^2 +\frac{1}{3}(\lambda_1^2 +\lambda_2^2+ \lambda_3^3).
\end{equation}
This equation has two families of solutions with a type of scaling invariance.  Let $S(0)=C diag(-2,1,1)$, with $C>0$ then $S(t)= f(t) diag(-2,1,1)$, where $f_t=f^2, f(0)=C$.  Therefore we have blowup in finite time, with $S(t)=\frac{1}{\frac{1}{C}-t}diag(-2,1,1).$ 
The reverse case, one positive eigenvalue and two equal negative eigenvalues, also preserves scaling, but decays to zero as $t \to \infty.$  Let $S(0)=C diag(-1,-1,2)$, with $C>0$.  Then $S(t)=\frac{1}{\frac{1}{C}+t}diag(-1,-1,2)$.

We will show that the blow up solution is stable, while the decay solution is unstable.  Furthermore the blow up solution is asymptotically a global attractor except for the unstable family of solutions that decay to zero (i.e two equal negative eigenvalues and the zero solution).
To prove this we will begin by rewriting our system.  First of all, we will assume without loss of generality, that $S \neq 0$, because clearly if $S(0)=0$, then $S(t)=0$ is the solution.  If $S \neq 0$, then clearly $\lambda_1<0$ and $\lambda_3>0$.  Our system of equations really only has two degrees of freedom, because of the condition $tr(S)= \lambda_1+ \lambda_2+ \lambda_3=0$, but because we are interested in the ratios of the eigenvalues asymptotically, we will reduce the system to the two parameters $\lambda_3$ and
$r=-\frac{\lambda_1}{\lambda_3}$. These two parameters completely determine our system because $\lambda_1=-r \lambda_3$ and $\lambda_2=-\lambda_1-\lambda_3= (r-1)\lambda_3$.  We now will rewrite our system of ODEs:
\begin{align}
\partial_t \lambda_3&= \frac{1}{3}(\lambda_1^2+\lambda_2^2-2\lambda_3^2)\\
&=\frac{1}{3}\lambda_3^2\left (r^2+(r-1)^2 +2\right)\\
&=\frac{1}{3}\lambda_3^2\left (2r^2 -2r-1  \right ),
\end{align}
and
\begin{align}
\partial_t r&= \frac{\lambda_1 \partial_t \lambda_3 -\lambda_3 \partial_t \lambda_1}{\lambda_3^2}\\
&= \lambda_3 \left( -r(-\frac{1}{3}- \frac{2}{3}r+\frac{2}{3}r^2) +(-\frac{2}{3}+\frac{2}{3}r+\frac{1}{3}r^2) \right)\\
&=\frac{1}{3} \lambda_3 (-2r^3+3r^2+3r-2).
\end{align}

At this point it will be useful to remark on the range of values our two variables can take.  Clearly the largest eigenvalue $\lambda_3 \geq 0$, and $\lambda_3=0$ if and only if $\lambda_1,\lambda_2,\lambda_3=0$.  Now we turn to the range of values for $r$.  Recall that $\lambda_2=(r-1)\lambda_3$, and that $\lambda_1 \leq \lambda_2 \leq \lambda_3$.  Therefore $-r \leq r-1 \leq 1$, so $\frac{1}{2} \leq r \leq 2$.  If we take $f(r)=-2r^3+3r^2+3r-2$, we find that $f(r)$ is positive for $\frac{1}{2} < r < 2,$ with 
$f(\frac{1}{2}),f(2)=0$. This is the basis for the blowup solution being the asymptotic attractor. We are now ready to state our theorem on the existence and asymptotic behaviour of finite time blow up solutions.

\begin{theorem}[Toy model dynamics]
Suppose $\lambda_3(0)>0$ and $ r(0)>\frac{1}{2}$, then there exists $T>0$ such that $\lim_{t \to T}\lambda_3(t)=+ \infty$, and furthermore $\lim_{t \to T}r(t)=2$
\end{theorem}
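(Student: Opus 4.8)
The plan is to exploit the fact that, throughout the evolution, the ratio variable $r$ is strictly monotone, and to use it as a new ``time'' variable. Since we assume $S\neq 0$ we have $\lambda_3>0$, and we may take $r(0)\in(\tfrac12,2)$, the case $r(0)=2$ being the explicit blow-up solution $S(t)=(\tfrac{1}{C}-t)^{-1}\mathrm{diag}(-2,1,1)$. Because $f(r)>0$ on $(\tfrac12,2)$ and $\lambda_3>0$, the equation $\partial_t r=\tfrac13\lambda_3 f(r)$ shows that $r$ is strictly increasing, while the line $\{r=2\}$ is invariant (there $f(2)=0$, so $r'=0$); by uniqueness the trajectory cannot cross it, so $r(t)<2$ for all $t$ in the interval of existence and $r(t)\uparrow r_\infty\le 2$. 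Strict monotonicity lets me regard $\lambda_3$ as a function of $r$, governed by the separable equation
\begin{equation}
\frac{d}{dr}\log\lambda_3=\frac{\partial_t\lambda_3}{\partial_t r}=\frac{2r^2-2r-1}{f(r)},
\end{equation}
which I integrate to obtain an explicit formula $\lambda_3(r)=\lambda_3(0)\exp\!\left(\int_{r(0)}^{r}\frac{2s^2-2s-1}{f(s)}\,ds\right)$, together with a formula for elapsed time $t(r)=\int_{r(0)}^{r}\frac{3\,ds}{\lambda_3(s)f(s)}$.

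Next I would show $r_\infty=2$. Suppose instead $r_\infty<2$. Then on $[r(0),r_\infty]$ both $f$ and $\lambda_3$ are continuous and bounded away from zero, so the time integral $\int_{r(0)}^{r_\infty}\frac{3\,ds}{\lambda_3(s)f(s)}$ is finite, and the trajectory reaches the point $(\lambda_3(r_\infty),r_\infty)$ with $\lambda_3(r_\infty)<\infty$ at a finite time. But there $\partial_t r=\tfrac13\lambda_3(r_\infty)f(r_\infty)>0$ since $f(r_\infty)>0$, so $r$ must increase strictly beyond $r_\infty$, contradicting that $r_\infty$ is the supremum of $r$ along the trajectory. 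Hence $r_\infty=2$ and $\lim_{t\to T}r(t)=2$, where $T$ is the maximal time of existence.

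Finally I would establish that $T<\infty$ and $\lim_{t\to T}\lambda_3(t)=+\infty$ by analysing the two integrals near the degenerate endpoint $r=2$. Since the numerator $2r^2-2r-1$ tends to $3$ while $f(r)=-9(r-2)+O((r-2)^2)$ vanishes linearly (as $f'(2)=-9$), the integrand in the formula for $\log\lambda_3$ behaves like $\tfrac{1}{3(2-r)}$, which is non-integrable; this forces $\lambda_3(r)\sim C(2-r)^{-1/3}\to+\infty$ as $r\to 2^-$, giving the blow-up of $\lambda_3$. Feeding this asymptotic into the time formula, the integrand $\tfrac{3}{\lambda_3(r)f(r)}$ behaves like $(2-r)^{-2/3}$, which is integrable near $2$, so $T=\int_{r(0)}^{2}\frac{3\,ds}{\lambda_3(s)f(s)}<\infty$. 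I expect the main obstacle to be exactly this asymptotic matching at the degenerate point $r=2$: one must pin down the precise rate $\lambda_3\sim(2-r)^{-1/3}$ (and hence the exponent $-\tfrac23$ in the time integrand) to conclude simultaneously that $\lambda_3$ blows up yet the blow-up occurs in finite time, since a coarser estimate would yield only one of these two conclusions.
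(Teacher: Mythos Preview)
Your proof is correct and takes a genuinely different route from the paper's.

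The paper works directly in the time variable and proceeds by differential inequalities. It first splits into two cases according to whether $r(0)$ lies above or below the zero $\tfrac{1+\sqrt3}{2}$ of $g(r)=2r^2-2r-1$: in the first case $\partial_t\lambda_3\ge C\lambda_3^2$ immediately, giving finite-time blow-up; in the second, a contradiction argument (bounding $\lambda_3$ below by $1/(\tfrac{1}{\lambda_3(0)}+Bt)$ and integrating) shows $r$ must eventually exceed $\tfrac{1+\sqrt3}{2}$, reducing to the first case. The claim $r\to 2$ is then proved by a second contradiction argument based on the representation $1/\lambda_3(t)=\int_t^T a(\tau)\,d\tau$.

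Your approach instead uses the monotonicity of $r$ to make $r$ the independent variable, yielding the closed formula $\lambda_3(r)=\lambda_3(0)\exp\!\int_{r(0)}^{r}\tfrac{g}{f}$ and the time representation $t(r)=\int_{r(0)}^{r}\tfrac{3}{\lambda_3 f}$. This avoids the case split on the sign of $g$ entirely and reduces both conclusions to a single asymptotic computation at the degenerate endpoint $r=2$: since $g(2)=3$ and $f'(2)=-9$, the integrand $g/f\sim \tfrac{1}{3(2-r)}$ forces $\lambda_3\sim C(2-r)^{-1/3}\to\infty$, while the induced integrand $3/(\lambda_3 f)\sim c(2-r)^{-2/3}$ is integrable, so $T<\infty$. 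Your continuation argument for $r_\infty=2$ is the standard one and is fine. The trade-off is that the paper's argument is more elementary (no asymptotic matching needed), while yours is cleaner, gives quantitative rates, and would generalize more readily to perturbations of the nonlinearity.
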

\begin{proof}
We'll start by showing that finite time blow up exists, and then we will show that $r$ goes to $2$ as we approach the blow up time.
First we observe that $g(r)=2r^2-2r-1$, has a zero at $\frac{1+\sqrt{3}}{2}$. $g(r)<0$, for $\frac{1}{2} \leq r <\frac{1+\sqrt{3}}{2} $, and $g$ is both positive and increasing on $\frac{1+\sqrt{3}}{2} < r \leq 2$.
We will begin with the case where $r(0)=r_0>\frac{1+\sqrt{3}}{2}$.  Clearly $\partial_t r\geq 0$, so $r(t)>r_0$, and $g\left ( r(t)\right )>g(r_0).$  Let $C= \frac{1}{3}g(r_0)$, then we find that:
\begin{equation}
\partial_t \lambda_3 =\frac{1}{3}g\left ( r(t)\right ) \lambda_3^2 \geq C \lambda_3^2.
\end{equation}
From this differential inequality, we find that
\begin{equation}
\lambda_3(t) \geq \frac{1}{\frac{1}{\lambda_3(0)}-Ct},
\end{equation}
so clearly there exists a time $T \leq\frac{1}{C \lambda_3(0)}$, such that $\lim_{t \to T}\lambda_3(t)=+ \infty.$

Now we consider the case where $\frac{1}{2}<r_0 \leq \frac{1+\sqrt{3}}{2}$.  It suffices to show that there exists a $T_a>0$ such that $r(T_a)>\frac{1+\sqrt{3}}{2}$, then the proof above applies.
Note that $g$ is increasing on the interval $\left[-\frac{1}{2}, 2 \right]$, so $g\left ( r(t) \right )> g(r_0)$. Let $B=-\frac{1}{3}g(r_0)>0$, and let $C=\frac{1}{3} \min \left (f(r_0),f(\frac{1+\sqrt{3}}{2}) \right )$.  Suppose towards contradiction that for all $t>0$,
$r(t) \leq \frac{1+\sqrt{3}}{2}$.  Then we will have the differential inequalities,
\begin{equation} \label{inequality1}
\partial_t r \geq C \lambda_3,
\end{equation}
\begin{equation} \label{inequality2}
\partial_t \lambda_3 \geq -B \lambda_3^2.
\end{equation}
From \eqref{inequality2} it follows that
\begin{equation} \label{EigenEstimate}
\lambda_3(t) \geq \frac{1}{\frac{1}{\lambda_3(0)}+Bt}.
\end{equation}
Plugging \eqref{EigenEstimate} into \eqref{inequality1}, we find that
\begin{equation} \label{contradict}
r(t) \geq r_0 +C\int_0^t \  \frac{1}{\frac{1}{\lambda_3(0)}+B \tau} d\tau=r_0+ \frac{C}{B} \log \left(1+ B \lambda_3(0)t \right).
\end{equation}
However, this estimate \eqref{contradict} clearly contradicts our hypothesis that $r(t) \leq \frac{1+\sqrt{3}}{2}$ for all $t>0$.
Therefore, we can conclude that there exists $T_a>0$, such that $r(T_a) > \frac{1+\sqrt{3}}{2}$, and then we have reduced the problem to the case that we have already proven.

Now we will show that $\lim_{t \to T}r(t)=2$.  Suppose toward contradiction that $\lim_{t \to T}r(t)=r_1<2$.  First take $a(t)=\frac{1}{3} f\left (r(t) \right )$.  Observe that $a(t)>0$ for $0 \leq t \leq T$.  
Our differential equation is now given by $\partial_t \lambda_3 =a(t) \lambda_3^2$, which must satisfy
\begin{equation} \label{ODE1}
\frac{1}{\lambda_3(t_1)}-\frac{1}{\lambda_3(t_2)}=\int_{t_1}^{t_2} a(\tau)d\tau.
\end{equation}
If we take $t_2=T$, the blow up time, then \eqref{ODE1} reduces to 
\begin{equation} \label{ODE2}
\frac{1}{\lambda_3(t)}=\int_{t}^{T} a(\tau)d\tau .
\end{equation}
Let $A(t)=\int_{t}^T a(\tau)d\tau$.  Clearly $A(T)=0, A'(T)=-a(T)<0$.  By the fundamental theorem of calculus, for all $m>a(T)$, there exists $\delta>0$, such that for all $t, T-\delta<t<T$,
\begin{equation} \label{FunCalc}
A(t) \leq -m(t-T)=m(T-t).
\end{equation}
Using the definition of $A$ and plugging in to \eqref{ODE2} we find that for all $T=\delta<t<T$,
\begin{equation} \label{EigenGrowth}
\lambda_3(t) \geq \frac{1}{m(T-t)}.
\end{equation}
Let $B=\frac{1}{3} min \left ( f(r_0), f(r_1) \right )$.  It then follows from our hypothesis that 
\begin{equation} \label{EigenODE}
\partial_t r\geq B \lambda_3.
\end{equation}
Therefore we can apply the estimate \eqref{EigenGrowth} to the differential inequality \eqref{EigenODE} to find that for all $T-\delta<t<T$,
\begin{equation} \label{rGrowth}
r(t)\geq r(T-\delta)+B \int_{T-\delta}^t \frac{1}{m(T-\tau)} d\tau= r(T-\delta) + \frac{B}{M} \log \left ( \frac{\delta}{T-t} \right ).
\end{equation}
However, it is clear from \eqref{rGrowth} that $\lim_{t \to T}r(t)= + \infty$, contradicting our hypothesis that $\lim_{t \to T}r(t)<2$,
so we can conclude that $\lim_{t \to T}r(t)=2$.
\end{proof}

This toy model ODE shows that the local part of the quadratic nonlinearity tends to drive the intermediate eigenvalue $\lambda_2$ upward to $\lambda_3$, unless $\lambda_1=\lambda_2.$ Given the nature of the regularity criterion on $\lambda_2^+,$ the dynamics of the eigenvalues of the strain matrix are extremely important. The fact that the toy model ODE blows up from all initial conditions where $\lambda_1<\lambda_2,$ and that $\lambda_2=\lambda_3$ is a global attractor on all initial conditions where $\lambda_1<\lambda_2,$ provides a mechanism for blowup, but of course the very complicated nonlocal effects make it impossible to say anything definitive about blowup for the full Navier-Stokes strain equation without a much more detailed analysis.

\pagebreak
\bibliographystyle{plain}
\bibliography{Bib}

\end{document}